\setlist[enumerate]{font=\upshape}
\colorlet{darkblue}{blue!55!black}
\colorlet{darkcyan}{cyan!50!black}
\colorlet{darkgreen}{green!60!black}
\def\eqref#1{\textcolor{darkblue}{(\ref{#1})}}
\crefname{hypothesis}{hypothesis}{hypotheses}
\Crefname{hypothesis}{Hypothesis}{Hypotheses}
\let\oldequation\equation
\let\oldendequation\endequation
\renewenvironment{equation}{\linenomathNonumbers\oldequation}{\oldendequation\endlinenomath}
\let\expandafter\oldequationstar\csname equation*\endcsname
\let\expandafter\oldendequationstar\csname endequation*\endcsname
\renewenvironment{equation*}{\linenomathNonumbers\oldequationstar}{\oldendequationstar\endlinenomath}
\let\oldalign\align
\let\oldendalign\endalign
\let\expandafter\oldalignstar\csname align*\endcsname
\let\expandafter\oldendalignstar\csname endalign*\endcsname
\renewenvironment{align*}{\linenomathNonumbers\oldalignstar}{\oldendalignstar\endlinenomath}
\theoremstyle{plain}
\numberwithin{equation}{section}
\newtheorem{theorem}[equation]{Theorem}
\newtheorem{lemma}[equation]{Lemma}
\newtheorem{proposition}[equation]{Proposition}
\newtheorem{corollary}[equation]{Corollary}
\newtheorem{introthm}{Theorem}
\theoremstyle{definition}
\newtheorem{definition}[equation]{Definition}
\newtheorem{example}[equation]{Example}
\theoremstyle{remark}
\newtheorem{remark}[equation]{Remark}
\newtheorem{notation}[equation]{Notation}
\newtheorem{claim}{Claim}
\newtheorem*{ack}{Acknowledgements}
\title[Compact approximation, descent, algebraic stacks]{Compact approximation and descent \\ for algebraic stacks}
\author[J. ~Hall]{Jack Hall}
\address{J.~Hall, School of Mathematics \& Statistics\\The University of Melbourne\\Parkville,
  VIC, 3010\\Australia}
\email{jack.hall@unimelb.edu.au}
\author[A. ~Lamarche]{Alicia Lamarche}
\address{A.~Lamarche, Yau Mathematical Sciences Center, Jingzhai, Tsinghua University\\ Haidian District, Beijing, China}
\email{lamarche@mail.tsinghua.edu.cn}
\author[P.~Lank]{Pat Lank}
\address{P.~Lank,
Dipartimento di Matematica “F. Enriques”, Universit\`{a} degli Studi di Milano, Via Cesare
Saldini 50, 20133 Milano, Italy}
\email{plankmathematics@gmail.com}
\author[F. ~Peng]{Fei Peng}
\address{F.~Peng,
School of Mathematics \& Statistics\\The University of Melbourne\\Parkville,
  VIC, 3010\\Australia}
\email{pengf2@student.unimelb.edu.au}
\date{\today}
\keywords{Derived categories, algebraic stacks, approximation by compacts, Rouquier dimension, descent}
\subjclass[2020]{14A30 (primary), 18F20, 14F08, 18G80}
\begin{document}

\begin{abstract}
    This work focuses on approximation and generation for the derived category of complexes with quasi-coherent cohomology on algebraic stacks. Our methods establish that approximation by compact objects descends along covers that are quasi-finite and flat. This generalizes a result of Lipman--Neeman for schemes and extends a related result known for algebraic spaces. We also study the behavior of generation under the derived pushforward and pullback of a morphism between algebraic stacks.
\end{abstract}

\maketitle

\section{Introduction}
\label{sec:intro}

Let $X$ be a Noetherian scheme. A well-known result of Lipman and Neeman states that any bounded above complex of coherent sheaves on $X$ can be approximated arbitrarily well by a perfect complex \cite[Theorem\ 4.2]{Lipman/Neeman:2007}. Lipman--Neeman actually prove a result in the non-Noetherian situation: on a quasi-compact and quasi-separated scheme, pseudo-coherent complexes can be approximated arbirarily well by perfect complexes. Our first main result is the following generalization.
\begin{introthm}\label{introthm:approximation_for_quasifinite}
    Let $\mathcal{X}$ be a quasi-compact and quasi-separated algebraic stack. If either
    \begin{enumerate}
        \item $\mathcal{X}$ has quasi-finite and separated diagonal, or
        \item $\mathcal{X}$ is Deligne--Mumford and of characteristics 0,
    \end{enumerate}
    then approximation by compact complexes holds for $\mathcal{X}$.
\end{introthm}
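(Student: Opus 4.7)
The plan is to deduce both cases from the paper's main descent theorem, which asserts that approximation by compact complexes descends along representable, quasi-finite, faithfully flat covers of algebraic stacks. The task thus reduces to producing, in each of the two cases, a cover $f \colon \mathcal{Y} \to \mathcal{X}$ of this form for which approximation on $\mathcal{Y}$ is already known: when $\mathcal{Y}$ is a quasi-compact and quasi-separated scheme this is the Lipman--Neeman result recalled above, and its extension to quasi-compact and quasi-separated algebraic spaces is well known.

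For case (1), the hypothesis of a quasi-finite and separated diagonal allows me to invoke Rydh's structure theorems for such stacks to produce a representable, quasi-finite, faithfully flat surjection from a quasi-compact and quasi-separated algebraic space (and, by further refinement, from a scheme). The descent theorem then yields approximation on $\mathcal{X}$ immediately.

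For case (2), every quasi-compact and quasi-separated Deligne--Mumford stack admits, essentially by definition, an \'etale and hence quasi-finite, flat, representable cover by a scheme. The characteristic~$0$ hypothesis intervenes to secure the auxiliary inputs needed to apply the descent theorem---notably, tameness of all finite automorphism groups, which underwrites compact generation of the derived category of $\mathcal{X}$ as well as compatibility of the relevant base-change arguments on the fiber products $\mathcal{Y} \times_{\mathcal{X}} \mathcal{Y}$. If a direct application of the \'etale atlas does not suffice, one can alternatively invoke destackification \`a la Bergh--Rydh to reduce to a scheme by a sequence of quasi-finite flat operations.

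The principal obstacle I anticipate is in case (2): verifying that the hypotheses of the descent theorem (and of the underlying compact-generation statement) are genuinely met by a naive \'etale atlas of a non-separated DM stack, or, failing that, executing the destackification argument cleanly so that the reduction to a scheme proceeds through quasi-finite flat covers only. The characteristic~$0$ assumption is essential here, since wild inertia in positive characteristic would obstruct either compact generation or the descent step itself.
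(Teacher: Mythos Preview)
Your plan for case (1) is exactly the paper's: invoke Rydh's theorem that a quasi-compact quasi-separated stack with quasi-finite separated diagonal admits a quasi-finite, separated, flat presentation by an affine scheme, and then apply the descent theorem.

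For case (2) your strategy---pass to an \'etale cover by a scheme and descend---is again what the paper does; the paper simply says to argue as in the proof of Hall--Rydh's Theorem~7.4, i.e., rerun the quasi-finite flat d\'evissage that establishes $1$-crispness for Deligne--Mumford $\mathbf{Q}$-stacks, replacing ``Thomason condition'' by ``approximation by compacts'' throughout. Your reading of the characteristic-$0$ hypothesis (tameness guarantees the Thomason condition on every stack arising in the d\'evissage) is on target. One genuine misstep, however: the proposed fallback via Bergh--Rydh destackification does not fit the descent framework. Destackification proceeds through stacky blowups and root constructions; blowups are not quasi-finite, and root stacks sit as \emph{targets} rather than sources of covering maps, so neither produces the representable, separated, quasi-finite, faithfully flat cover that the descent theorem requires. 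If the na\"ive \'etale atlas causes trouble with separatedness, the remedy is the Hall--Rydh d\'evissage argument itself, not destackification.
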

See \Cref{def:approximation} for a precise definition of approximation by compact complexes. Since every compact complex on a quasi-compact and quasi-separated stack is perfect \cite[Lemma 4.4]{Hall/Rydh:2017},  \Cref{introthm:approximation_for_quasifinite} generalizes the aforementioned result of Lipman and Neeman for schemes and \cite[\href{https://stacks.math.columbia.edu/tag/08H}{Tag 08HP}]{StacksProject} for algebraic spaces.  %
Our second main result is a variant of \Cref{introthm:approximation_for_quasifinite} for stacks with infinite stabilizers.

\begin{introthm}\label{introthm:approximation_for_good_moduli_spaces}
  Let $\mathcal{X}$ be a quasi-compact and quasi-separated algebraic
  stack that admits a good moduli space. If $\mathcal{X}$ has affine
  stabilizers and separated diagonal, then approximation by compact
  complexes holds for $\mathcal{X}$.
\end{introthm}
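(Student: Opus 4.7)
The plan is to use the Alper--Hall--Rydh local structure theorem to étale-locally present $\mathcal{X}$ as a quotient stack by a linearly reductive group, and then invoke the paper's main descent result for approximation along quasi-finite flat covers to transfer the local case back up to $\mathcal{X}$.

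First, I would exploit the good moduli space to produce local charts. Let $\pi \colon \mathcal{X} \to X$ denote the good moduli space morphism. A standard consequence of Alper's theory is that every closed point $x \in |\mathcal{X}|$ has linearly reductive stabilizer $G_x$, which under our hypotheses is also affine and separated. The Alper--Hall--Rydh structure theorem then yields an étale, affine, stabilizer-preserving morphism $f_x \colon \mathcal{W}_x := [\operatorname{Spec}(A_x)/G_x] \to \mathcal{X}$. Since $X$ is quasi-compact, finitely many such $f_{x_i}$ already cover $\mathcal{X}$: each $f_{x_i}$ has open image, and by specialization any open of $|\mathcal{X}|$ containing all closed points equals $|\mathcal{X}|$.

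Setting $\mathcal{W} := \bigsqcup_i \mathcal{W}_{x_i}$ and $f := \bigsqcup_i f_{x_i}$, the morphism $f \colon \mathcal{W} \to \mathcal{X}$ is étale, representable by affine schemes, and surjective; in particular it is quasi-finite and flat. The paper's descent theorem then reduces approximation for $\mathcal{X}$ to approximation for each $\mathcal{W}_{x_i}$. For each such local quotient $[\operatorname{Spec}(A)/G]$ with $G$ linearly reductive and affine, I would invoke the equivariant resolution property, which here says that every coherent sheaf is a quotient of a finite-rank equivariant vector bundle. Given a pseudo-coherent complex $F$ and a truncation level $n$, I would iteratively build a left resolution $E_\bullet \to F$ by finite-rank equivariant vector bundles and then naively truncate at a sufficiently negative degree dictated by the pseudo-coherence bound on $F$; the resulting perfect complex $P$ together with its map $P \to F$ realizes the approximation in the sense of \Cref{def:approximation}.

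The main obstacle is Step 1: verifying that Alper--Hall--Rydh applies under precisely the stated hypotheses (qcqs, affine stabilizers, separated diagonal, good moduli space) and that the patching of local slices into a single quasi-finite flat cover goes through. A secondary subtlety is the non-Noetherian regime, which may require approximating $\mathcal{X}$ by finite-type models over $\mathbb{Z}$ via standard limit techniques before applying the structure theorem. Once these are in place, the descent step and the local resolution argument are essentially formal.
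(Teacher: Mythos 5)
Your overall strategy — obtain a quasi-finite flat (étale, separated, quasi-compact) cover of $\mathcal{X}$ by ``nice'' local quotient stacks via Alper--Hall--Rydh local structure, and then invoke \Cref{thm:etale_descent_approximation} — is precisely the paper's route. The paper simply packages the local structure step by citing a result of Rydh showing that, under the stated hypotheses, $\mathcal{X}$ is of s-global \emph{compact} type (i.e., admits a separated étale cover by a stack with the compact resolution property), and then applies \Cref{cor:approximation_for_good_moduli_spaces}. Your direct invocation of Alper--Hall--Rydh to produce charts $[\operatorname{Spec}(A)/G]$ with $G$ linearly reductive affine, followed by gluing finitely many of them using quasi-compactness of the good moduli space, is the same idea unfolded.

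However, your treatment of the local case has a genuine gap. You propose to approximate a pseudo-coherent complex $F$ on $[\operatorname{Spec}(A)/G]$ by naively truncating a resolution $E_\bullet \to F$ by finite-rank equivariant vector bundles. This produces a map from a perfect (indeed, since such stacks are concentrated, compact) complex $P \to F$ that is an $m$-isomorphism for appropriate $m$ — but it completely ignores the support condition in \Cref{def:approximation}: the compact complex $P$ must be supported on the given closed subset $T$, while the naive truncation of a vector bundle resolution has full support. This is exactly why the paper's \Cref{prop:approximation_resolution_property} is not a triviality: it requires the Koszul-type complexes $\mathcal{Q}_n$ constructed in \Cref{prop:nonnoetherian-deligne-formula} to kill the map after restriction to the open complement, and \Cref{thm:uniform-bound-ext-dim} to furnish the uniform bound $r$ needed in the definition of approximation. (In your local linearly reductive charts the cohomological dimension is $0$, so the uniform-bound issue largely evaporates, but the support issue does not.) Once you route the local case through the resolution property as in \Cref{prop:approximation_resolution_property}, rather than the naive truncation, the argument closes.
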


Both \Cref{introthm:approximation_for_quasifinite} and \ref{introthm:approximation_for_good_moduli_spaces} are consequences of a general descent result for approximation by compact complexes (\Cref{thm:etale_descent_approximation}). This confirms an expectation of the first author and Rydh in \cite{Hall/Rydh:2017}. Our argument for \Cref{thm:etale_descent_approximation} is inspired by the proof of \cite[Theorem\ C]{Hall/Rydh:2017}, which we follow closely. The key ingredient is quasi-finite and flat d\'evissage developed by the first author and Rydh in \cite{Hall/Rydh:2018}. 

\Cref{introthm:approximation_for_quasifinite} and \ref{introthm:approximation_for_good_moduli_spaces} have some interesting applications. For instance, \Cref{introthm:approximation_for_quasifinite} was used to establish a variant of \cite[Theorem\ 0.1]{Neeman:2022} relating regularity and the existence of bounded t-structures on the category of perfect complexes for algebraic stacks \cite{DeDeyn/Lank/KabeerManaliRahul/Peng:2025}. In this article, we will extend various results about generation from the scheme case established in \cite{Lank:2024,Lank/Olander:2024,Dey/Lank:2024} to the case of algebraic stacks.

Recall that a morphism of finite type $\pi \colon \mathcal{Y}\to \mathcal{S}$ between Noetherian algebraic stacks is said to be \textbf{cohomologically proper} if $\mathbb{R}\pi_\ast E$ belongs to $D^{+}_{\operatorname{coh}}(\mathcal{S})$ for each $E$ in $\operatorname{coh}(\mathcal{Y})$. Additionally, we say $\pi \colon \mathcal{Y}\to \mathcal{S}$ is \textbf{universally cohomologically proper} if the projection $\mathcal{Y}\times_{\mathcal{S}}\mathcal{Z} \to \mathcal{Z}$ is cohomologically proper for every morphism $\mathcal{Z} \to \mathcal{S}$ \cite[$\S 2$]{Alper/Hall/BenjaminLim:2023}. For example, a proper morphism of Noetherian algebraic stacks is universally cohomologically proper \cite[Example 2.6]{Alper/Hall/BenjaminLim:2023} and universally cohomologically proper morphisms are always universally closed \cite[Proposition 2.4.5]{Halpern-Leistner/Preygel:2023}. As a consequence of our main theorems, we obtain the following amplification of \cite[Lemma\ 2.17]{Ballard/Favero:2012}.

\begin{introthm}\label{introthm:rouq_dim_bound_tame_stack_coarse_moduli}
    Let $f\colon \mathcal{X} \to Y$ be a universally cohomologically proper, surjective and concentrated morphism of algebraic stacks, where $Y$ is a Noetherian algebraic space that is irreducible, Jacobson and catenary of finite Krull dimension with open regular locus. If the natural map $\mathcal{O}_Y \to \mathbb{R}f_\ast \mathcal{O}_{\mathcal{X}}$ splits, then $\dim Y \leq \dim D^b_{\operatorname{coh}}(\mathcal{X})$.
\end{introthm}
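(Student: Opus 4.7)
The plan is to descend a classical generator of $D^b_{\operatorname{coh}}(\mathcal{X})$ via the splitting to a classical generator of $D^b_{\operatorname{coh}}(\mathcal{O}_{Y,y})$ at a carefully chosen regular closed point $y \in Y$, and then apply Rouquier's lower bound for regular local rings. Set $t := \dim D^b_{\operatorname{coh}}(\mathcal{X})$ and fix a classical generator $G$ realising $D^b_{\operatorname{coh}}(\mathcal{X}) = \langle G \rangle_{t+1}$.

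The hypotheses on $Y$ are tailored to supply a closed regular point of maximal local dimension. Jacobsonness makes closed points dense in every locally closed subspace, and in particular in the nonempty open regular locus; irreducibility, catenariness, and finite Krull dimension force every closed point $y$ of $Y$ to satisfy $\dim \mathcal{O}_{Y,y} = \dim Y$. Fix such a closed regular point $y$, so $\mathcal{O}_{Y,y}$ is a regular local ring of dimension $d := \dim Y$.

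For each perfect complex $F \in \operatorname{Perf}(\mathcal{O}_{Y,y})$, Noetherian spread-out furnishes an \'etale affine neighborhood $V = \operatorname{Spec} A$ of $y$ and a perfect complex $\tilde F \in \operatorname{Perf}(V)$ with $\tilde F_y \simeq F$. Concentratedness, universal cohomological properness, and the splitting are all stable under base change along $V \to Y$, so $f_V \colon \mathcal{X}_V \to V$ still satisfies the hypotheses of the theorem, and $G|_{\mathcal{X}_V}$ is a classical generator of $D^b_{\operatorname{coh}}(\mathcal{X}_V)$ with generation time $\leq t$. The pullback $\mathbb{L}f_V^{\ast}\tilde F$ is perfect on $\mathcal{X}_V$, hence lies in $\langle G|_{\mathcal{X}_V} \rangle_{t+1}$, and the projection formula (applicable since $f_V$ is concentrated) gives
\[
\mathbb{R}f_{V\ast}\mathbb{L}f_V^{\ast}\tilde F \;\simeq\; \tilde F \otimes^{L}_{\mathcal{O}_V} \mathbb{R}f_{V\ast}\mathcal{O}_{\mathcal{X}_V} \;\in\; \langle \mathbb{R}f_{V\ast}\,G|_{\mathcal{X}_V} \rangle_{t+1}.
\]
Tensoring the splitting $\mathcal{O}_V \to \mathbb{R}f_{V\ast}\mathcal{O}_{\mathcal{X}_V} \to \mathcal{O}_V$ with $\tilde F$ exhibits $\tilde F$ as a direct summand of $\mathbb{R}f_{V\ast}\mathbb{L}f_V^{\ast}\tilde F$. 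Stalking at $y$, flat base change identifies $(\mathbb{R}f_{V\ast}\,G|_{\mathcal{X}_V})_y \simeq (\mathbb{R}f_{\ast}G)_y$, so $F = \tilde F_y$ sits in $\langle (\mathbb{R}f_{\ast}G)_y \rangle_{t+1}$; since $F$ was arbitrary, $\operatorname{Perf}(\mathcal{O}_{Y,y}) \subseteq \langle (\mathbb{R}f_{\ast}G)_y \rangle_{t+1}$ in $D^b_{\operatorname{coh}}(\mathcal{O}_{Y,y})$.

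Because $\mathcal{O}_{Y,y}$ is regular local, $\operatorname{Perf}(\mathcal{O}_{Y,y}) = D^b_{\operatorname{coh}}(\mathcal{O}_{Y,y})$, so the preceding inclusion displays $(\mathbb{R}f_{\ast}G)_y$ as a classical generator of $D^b_{\operatorname{coh}}(\mathcal{O}_{Y,y})$ with generation time at most $t$. Rouquier's lower bound for regular local rings of dimension $d$ then forces $t \geq d = \dim Y$, which is the claim. I expect the main obstacle to be the base-change step of the third paragraph: one must verify that the splitting and the projection formula descend to an \'etale affine chart for the stack morphism $f_V$, so that the pull--push-with-summand argument produces the containment $F \in \langle (\mathbb{R}f_{\ast}G)_y \rangle_{t+1}$ in exactly $t+1$ cones rather than incurring a multiplicative factor from iterated generation.
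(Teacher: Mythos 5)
Your strategy departs from the paper's in a meaningful way. Rather than routing the generation bound for $D^b_{\operatorname{coh}}(Y)$ through Corollaries 6.2--6.3 and hence Theorem D (descendability of universally submersive morphisms), you use the splitting directly: the projection formula exhibits each perfect complex on $Y$ as a summand of a pushforward from $\mathcal{X}$, and the descendability machinery is indeed overkill once a splitting is in hand. You then stalk at a single regular closed point instead of shrinking to an affine Zariski open of the schematic locus as the paper does. Both are reasonable designs.

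There is, however, a genuine gap, and it is not the one you flagged. The splitting and projection formula base-change without difficulty along any flat morphism; the problem is your assertion that the perfect complex $\mathbb{L}f_V^{\ast}\tilde F$ lies in $\langle G|_{\mathcal{X}_V}\rangle_{t+1}$. If $V \to Y$ is merely an \'etale affine neighborhood, then $\mathcal{X}_V \to \mathcal{X}$ is \'etale affine but not an open immersion, so the Verdier localization of \Cref{prop:verdier} does not apply, and one cannot conclude that every object of $\operatorname{Perf}(\mathcal{X}_V)$ is a restriction from $\mathcal{X}$; for a non-finite \'etale $g$ there is in general no trace splitting $E \hookrightarrow g^\ast g_\ast E$ either. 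The fix is exactly the device the paper uses: the schematic locus of a Noetherian quasi-separated algebraic space is open and dense, so intersect it with the regular locus, use Jacobsonness to select the closed point $y$ there, and take $V$ to be a Zariski affine open of $Y$; then $\mathcal{X}_V \hookrightarrow \mathcal{X}$ is an open immersion and \Cref{prop:verdier} delivers your containment.

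A second point needs attention: you conclude via ``Rouquier's lower bound for regular local rings'' that $\dim D^b_{\operatorname{coh}}(\mathcal{O}_{Y,y}) \geq d$. What the Koszul complex and Letz's level computation give directly is $\operatorname{level}^{R}(\kappa) = d+1$; upgrading that to a lower bound on generation time of an \emph{arbitrary} classical generator of $D^b_{\operatorname{coh}}(R)$ is nontrivial, and the paper handles it by first shrinking (via the Dey--Lank lemma) to an open where the pushed-forward generator and $\mathcal{O}$ mutually build each other in a single cone before invoking Letz. You should either cite a precise reference establishing $\dim D^b_{\operatorname{coh}}(R) \geq \dim R$ for a Noetherian regular local ring, or run the same Dey--Lank reduction on $\mathbb{R}f_\ast G$ before choosing $y$.
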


Note that $\dim D^b_{\operatorname{coh}}(\mathcal{X})$ refers to the Rouquier dimension $D^b_{\operatorname{coh}}(\mathcal{X})$ as a triangulated category \cite{Rouquier:2008}. Our argument for \Cref{introthm:rouq_dim_bound_tame_stack_coarse_moduli} relies on the following descendability (see \Cref{def:descendable_object}) result for coverings of concentrated algebraic stacks. 

\begin{introthm}\label{introthm:descendable_submersive}
    Let $\mathcal{S}$ be an algebraic stack that is concentrated and Noetherian. If $f \colon \mathcal{Y} \to \mathcal{S}$ is a universally submersive morphism of algebraic stacks of finite type, then $f$ is descendable.
\end{introthm}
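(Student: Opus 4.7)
The plan is to argue by Noetherian induction on the underlying topological space of $\mathcal{S}$, reducing at each stage to proper reduced closed substacks. The base case of empty $\mathcal{S}$ is trivial, so assume the result is known for every proper reduced closed substack of $\mathcal{S}$.

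By generic flatness for finite-type morphisms between Noetherian algebraic stacks, there is a schematically dense open substack $j \colon \mathcal{U} \hookrightarrow \mathcal{S}$ such that the base change $f_\mathcal{U} \colon \mathcal{Y} \times_\mathcal{S} \mathcal{U} \to \mathcal{U}$ is flat of finite presentation. Since $f$ is universally submersive, hence surjective, $f_\mathcal{U}$ is an fpqc cover of $\mathcal{U}$, and therefore descendable of index at most $2$ by a theorem of Mathew. Let $i \colon \mathcal{Z} \hookrightarrow \mathcal{S}$ denote the reduced closed complement. Then $\mathcal{Z}$ is again Noetherian and concentrated, and $f_\mathcal{Z} \colon \mathcal{Y} \times_\mathcal{S} \mathcal{Z} \to \mathcal{Z}$ is universally submersive and of finite type, so the inductive hypothesis delivers descendability of $f_\mathcal{Z}$.

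It remains to combine these into descendability of $f$ itself. Since $\mathcal{S}$ is concentrated, the cofiber $X \coloneqq \operatorname{cof}(\mathcal{O}_\mathcal{S} \to \mathbb{R}f_\ast \mathcal{O}_\mathcal{Y}) \in D_{\mathrm{qc}}(\mathcal{S})$ is available, and descendability of $f$ is equivalent to some tensor power of $X$ being zero in the tensor triangulated category $D_{\mathrm{qc}}(\mathcal{S})$. Descendability of $f_\mathcal{U}$ produces $n \geq 1$ with $j^\ast X^{\otimes n} = 0$, so $X^{\otimes n}$ is (cohomologically) supported on $\mathcal{Z}$. Using the Noetherian structure one can then arrange a finite filtration of $X^{\otimes n}$ whose graded pieces are of the form $i_\ast G_\alpha$ with $G_\alpha \in D_{\mathrm{qc}}(\mathcal{Z})$. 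Iterating the projection formula $i_\ast G \otimes i_\ast G' \simeq i_\ast(G \otimes \mathbb{L} i^\ast i_\ast G')$ and feeding in the $\otimes$-nilpotence of $\mathbb{L} i^\ast X$ supplied by the descendability of $f_\mathcal{Z}$ shows that a sufficiently high tensor power of $X$ vanishes.

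The principal obstacle is this final gluing step: closed immersions are generally not descendable, so one cannot simply base change to $\mathcal{Z} \sqcup \mathcal{U}$ and conclude; the filtration–plus–projection-formula device is forced upon us. Implementing it in the stack setting requires genuine care, because the objects $X^{\otimes n}$ need not be bounded or compact, and one must justify that the filtration of $X^{\otimes n}$ by pushforwards from $\mathcal{Z}$ really exists and is of finite length. This is where the concentrated hypothesis on $\mathcal{S}$ plays its essential role: it keeps $D_{\mathrm{qc}}(\mathcal{S})$ compactly generated and the functor $\mathbb{R}f_\ast$ sufficiently well-behaved for the nilpotence computation to close up uniformly.
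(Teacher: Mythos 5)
Your proposal takes a genuinely different route from the paper. The paper uses two applications of Mathew's formal result \cite[Proposition 3.24(2)]{Mathew:2016}: first a smooth affine cover $\operatorname{Spec}(A) \to \mathcal{S}$ (descendable by \cite[Theorem 7.1]{Hall:2022}) reduces to the case $\mathcal{S}$ affine Noetherian, and then a smooth affine cover $\operatorname{Spec}(B) \to \mathcal{Y}$ exhibits $f$ as a factor of a universally submersive finite type morphism of affine Noetherian schemes, to which \cite[Proposition 11.25]{Bhatt/Scholze:2017} applies directly. You instead attempt to transplant the Bhatt--Scholze Noetherian-induction argument bodily to stacks. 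That is a reasonable instinct, but it is the harder road, and the sketch as written contains a genuine error.

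The error lies in your characterization of descendability. You write that, for $X := \operatorname{cof}(\mathcal{O}_\mathcal{S} \to \mathbb{R}f_\ast\mathcal{O}_\mathcal{Y})$, descendability of $f$ is equivalent to $X^{\otimes n} \simeq 0$ for some $n$. That is false. Per \Cref{def:descendable_object}, descendability means that in the triangle $K \xrightarrow{\delta} \mathcal{O}_\mathcal{S} \to \mathbb{R}f_\ast\mathcal{O}_\mathcal{Y}$ the \emph{morphism} $\delta^{\otimes n} \colon K^{\otimes n} \to \mathcal{O}_\mathcal{S}$ vanishes for some $n$; this does not force the \emph{object} $K^{\otimes n}$ (or $X^{\otimes n}$) to vanish. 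A concrete counterexample: take $f$ to be the fold map $S \sqcup S \to S$ over any nonempty Noetherian $S$. This is finite, faithfully flat, hence descendable of index $2$, yet $X \cong \mathcal{O}_S$ and $K \cong \mathcal{O}_S[-1]$, so $X^{\otimes n} \cong \mathcal{O}_S \neq 0$ for every $n$. Because of this, the next step — ``descendability of $f_{\mathcal{U}}$ produces $n$ with $j^\ast X^{\otimes n} = 0$'' — is unsupported, and the rest of the gluing argument (filtering $X^{\otimes n}$ by pushforwards from $\mathcal{Z}$, applying the projection formula) is built on sand.

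Even with the right definition in hand, the gluing step you describe is exactly where Bhatt--Scholze have to work. The correct mechanism in their Noetherian induction is not a support/tensor-nilpotence argument about $X$ but rather a separate descendability lemma for the ``cover'' by the open $\mathcal{U}$ and the reduced closed complement $\mathcal{Z}$ \cite[Lemma 11.26]{Bhatt/Scholze:2017}, which asserts that $\mathcal{O}_X \to \mathbb{R}j_\ast\mathcal{O}_U \times i_\ast\mathcal{O}_Z$ is descendable on a Noetherian scheme. You explicitly flag that closed immersions are not descendable (correct) but then do not identify this replacement lemma, which is the crux. Generalizing it to Noetherian concentrated stacks would require its own proof; it is not a formality. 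By contrast, the paper sidesteps all of this by reducing to the scheme case where \cite[Proposition 11.25]{Bhatt/Scholze:2017} can simply be quoted. If you wish to pursue the direct induction, you should replace the false ``$X^{\otimes n}=0$'' criterion with the correct $\delta^{\otimes n}=0$ one and prove a stacky analogue of Bhatt--Scholze's Lemma 11.26.
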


\Cref{introthm:descendable_submersive} extends previous results of Bhatt and Scholze \cite[Theorem\ 11.25]{Bhatt/Scholze:2017} for schemes, the first author \cite[Theorem\ 7.1]{Hall:2022} for faithfully flat coverings of finite presentation, and the first author and Priver \cite[Theorem\ B.1]{Hall/Priver:2024} for finite morphisms to tame stacks. A pleasant consequence of this circle of ideas is the following base change result for generation. Note that this result is sharp (\Cref{ex:bad_behavior_1} and \Cref{ex:bad_behavior_2}).

\begin{introthm}\label{introthm:base_change_generation}
    Let $k \subseteq \ell$ be a field extension, where $k$ is perfect. Let $\mathcal{Y}$ be an algebraic stack of finite type over $k$ with quasi-finite and separated diagonal. Let $w \colon \mathcal{Y} \otimes_k \ell \to \mathcal{Y}$ be the induced morphism. If $G$ is a classical generator for $D^b_{\operatorname{coh}}(\mathcal{Y})$, then $w^*G$ is a classical generator for $D^b_{\operatorname{coh}}(\mathcal{Y} \otimes_k \ell)$. Moreover, if $\langle w^*G \rangle_n = D^b_{\operatorname{coh}}(\mathcal{Y} \otimes_k \ell)$ for some $n$, then $\langle G \rangle_n = D^b_{\operatorname{coh}}(\mathcal{Y})$. 
\end{introthm}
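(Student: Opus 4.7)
The plan is to reduce both assertions to a finite-type base change via Noetherian approximation, then invoke the descendability result \Cref{introthm:descendable_submersive}. Writing $\ell = \operatorname{colim}_\alpha A_\alpha$ as the filtered union of its finitely generated $k$-subalgebras, each morphism $\operatorname{Spec} A_\alpha \to \operatorname{Spec} k$ is faithfully flat of finite presentation (since $A_\alpha$ is a nonzero $k$-algebra of finite type), so its base change $w_\alpha \colon \mathcal{Y} \otimes_k A_\alpha \to \mathcal{Y}$ is a surjective, universally submersive morphism of finite type between concentrated Noetherian algebraic stacks. Concentratedness of $\mathcal{Y}$ follows from its quasi-finite and separated diagonal, and \Cref{introthm:descendable_submersive} then gives descendability of $w_\alpha$.

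For the classical-generator assertion, take any $F \in D^b_{\operatorname{coh}}(\mathcal{Y} \otimes_k \ell)$. Because $\mathcal{Y} \otimes_k \ell = \lim_\alpha \mathcal{Y} \otimes_k A_\alpha$ along affine morphisms, a standard limit approximation argument for coherent derived categories yields an index $\alpha$ and $F_\alpha \in D^b_{\operatorname{coh}}(\mathcal{Y} \otimes_k A_\alpha)$ whose pullback to $\mathcal{Y} \otimes_k \ell$ is $F$. Descendability of $w_\alpha$ implies that pullback along $w_\alpha$ transports classical generators, so $w_\alpha^* G$ classically generates $D^b_{\operatorname{coh}}(\mathcal{Y} \otimes_k A_\alpha)$ and hence $F_\alpha \in \langle w_\alpha^* G \rangle$. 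Pulling back along the flat morphism $\mathcal{Y} \otimes_k \ell \to \mathcal{Y} \otimes_k A_\alpha$ produces $F \in \langle w^* G \rangle$.

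For the \emph{moreover}, assume $\langle w^* G \rangle_n = D^b_{\operatorname{coh}}(\mathcal{Y} \otimes_k \ell)$ and fix $E \in D^b_{\operatorname{coh}}(\mathcal{Y})$. My plan is to apply a sharp ghost characterization of level in the style of Rouquier and Oppermann: $E \in \langle G \rangle_n$ precisely when every composition of $n$ consecutive $G$-ghosts starting at $E$ vanishes. A composition $E \to E_1 \to \cdots \to E_n$ of $G$-ghosts pulls back to a composition on $\mathcal{Y} \otimes_k \ell$, and the flat base change isomorphism $\operatorname{Hom}_{\mathcal{Y}}(G[i], E_j) \otimes_k \ell \xrightarrow{\sim} \operatorname{Hom}_{\mathcal{Y} \otimes_k \ell}(w^* G[i], w^* E_j)$ (valid since $G$ is pseudo-coherent on the Noetherian stack $\mathcal{Y}$ and $w$ is flat and affine) shows each pulled-back arrow remains a $w^* G$-ghost. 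Since $w^* E \in \langle w^* G \rangle_n$, the pulled-back composition vanishes; faithful flatness of $k \to \ell$ then lifts this vanishing back to $\mathcal{Y}$, so $E \in \langle G \rangle_n$.

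The main obstacle I anticipate is securing this sharp ghost characterization of $\langle G \rangle_n$ with the same index $n$, rather than the weaker bound of Rouquier's original formulation. Here \Cref{introthm:approximation_for_quasifinite} is essential: approximation by compacts places $D^b_{\operatorname{coh}}(\mathcal{Y})$ in the compactly generated triangulated setting where the sharp ghost lemma is available. The remainder reduces to careful bookkeeping of flat base change for $\operatorname{RHom}$ on bounded coherent complexes over the quasi-finite-diagonal stack $\mathcal{Y}$, which is routine given the hypotheses.
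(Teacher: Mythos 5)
Your proposal has a genuine gap in both halves, and, tellingly, it never uses the hypothesis that $k$ is perfect. Example~\ref{ex:bad_behavior_2} in the paper shows that this hypothesis is essential, so any argument that ignores it must be incomplete.

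The central problem is the step ``descendability of $w_\alpha$ implies that pullback along $w_\alpha$ transports classical generators.'' Descendability of $w_\alpha \colon \mathcal{Y}\otimes_k A_\alpha \to \mathcal{Y}$ is a statement about \emph{pushforward}: by Remark~\ref{rem:descendable-concentrated} and Corollary~\ref{cor:descendable_proper}, it yields $D^b_{\operatorname{coh}}(\mathcal{Y}) = \langle \mathbb{R}(w_\alpha)_\ast D^b_{\operatorname{coh}}(\mathcal{Y}\otimes_k A_\alpha)\rangle_n$. It says nothing about pullback generating the category upstairs. The pullback direction that is available (Lemma~\ref{lem:pullback_compact_generator_convservative} via conservativity of $\mathbb{R}(w_\alpha)_\ast$) concerns \emph{compact} generators of $D_{\operatorname{qc}}$, and a classical generator $G$ of $D^b_{\operatorname{coh}}(\mathcal{Y})$ is not compact when $\mathcal{Y}$ is singular. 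Worse, the claim itself is false: take $\mathcal{Y}=\operatorname{Spec}(k)$, $\ell = k(t)$, and $A_\alpha = k[t^2,t^3]\subseteq \ell$, a singular finitely generated $k$-subalgebra. Then $w_\alpha^\ast(k) = \mathcal{O}_{\operatorname{Spec}(A_\alpha)}$ generates only $\operatorname{Perf}(\operatorname{Spec}(A_\alpha))$, which is a proper subcategory of $D^b_{\operatorname{coh}}$ since $A_\alpha$ is not regular. Your Noetherian approximation actually introduces this singularity problem: $\mathcal{Y}\otimes_k\ell$ can be better behaved than the intermediate $\mathcal{Y}\otimes_k A_\alpha$. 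The paper avoids all of this by using de Jong's theorem (which is where $k$ perfect enters, Example~\ref{ex:dejong-alteration}) to produce a weak smooth alteration $\tilde{\mathcal{Y}}\to\mathcal{Y}$, reducing to the smooth case where $D^b_{\operatorname{coh}} = \operatorname{Perf}$ and the conservativity argument does apply; the pushforward-descendability machinery (Corollary~\ref{cor:descendable_proper}) is then used to get back down to $\mathcal{Y}$.

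For the ``moreover'' part, the sharp ghost characterization of $\langle G\rangle_n$ requires $G$ to be a compact object of an ambient compactly generated category, which again fails when $\mathcal{Y}$ is singular. Approximation by compacts (Theorem~\ref{introthm:approximation_for_quasifinite}) lets one compare level in $D^b_{\operatorname{coh}}$ with level against perfect complexes (Lemma~\ref{lem:rouq_dim_stacks_via_approx}); it does not make $G$ compact or place $D^b_{\operatorname{coh}}$ into a compactly generated setting. The paper's argument for this half is in fact much more elementary: because $\ell$ is a free $k$-module, $\mathbb{R}w_\ast w^\ast E \simeq E^{\oplus I}$ by the projection formula and flat base change, so applying $\mathbb{R}w_\ast$ to the hypothesis $w^\ast E \in \langle w^\ast G\rangle_n$ gives $E^{\oplus I} \in \overline{\langle G\rangle}_n$, and one concludes by the comparison between big and small level.
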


This article is organized as follows. In \Cref{sec:prelim}, we recall some basic facts about generation of triangulated categories, especially in the case of derived categories of sheaves on algebraic stacks. In \Cref{sec:approximation}, we introduce the notion of approximation by compact complexes and record some basic properties. In \Cref{sec:resolution-property}, we recall the compact resolution property and show that approximation by compact complexes holds for algebraic stacks with the compact resolution property.  In \Cref{sec:approximation_by_perfect}, we show that approximation by compact complex is local for the quasi-finite and flat topology, and use it to deduce \Cref{introthm:approximation_for_quasifinite} and \Cref{introthm:approximation_for_good_moduli_spaces}. In \Cref{sec:devissage}, we prove \Cref{introthm:descendable_submersive} and then use it and the main theorems to deduce \Cref{introthm:rouq_dim_bound_tame_stack_coarse_moduli}. We also study the behaviour of strong generation along base change and prove \Cref{introthm:base_change_generation}. In two appendices, we record a variant of Deligne's formula for local cohomology and the Verdier localization sequence for the bounded derived category of $\mathcal{O}_{\mathcal{X}}$-modules with coherent cohomology, which are used in \Cref{sec:resolution-property}.

\begin{remark}
  Another result contained in \cite{Lipman/Neeman:2007}, which we do
  not establish, is the existence of a uniform bound for maps from a
  compact generator to a general object in the unbounded derived
  category. While it is expected that this result extends to
  quasi-compact algebraic stacks with quasi-finite and separate
  diagonal, it appears to require a deeper understanding of
  Koszul-like complexes to run a d\'evissage effectively.
\end{remark}

\subsection{Conventions}
\label{sec:intro_convention}

We will use symbols $X,Y,\ldots$ to denote schemes and $\mathcal{X},\mathcal{Y},\ldots$ to denote algebraic stacks. Our work will follow the conventions and terminology set forth in \cite{StacksProject,Hall/Rydh:2017}. Let $\mathcal{X}$ be an algebraic stack. The following categories will be of interest throughout this document. See \cite[$\S 1.3$]{Hall/Rydh:2017} for details on functors between these categories induced from a morphism of algebraic stacks.

\begin{itemize}
    \item $\operatorname{Mod}(\mathcal{X})$ denotes the Grothendieck abelian category of $\mathcal{O}_\mathcal{X}$-modules on the lisse-\'{e}tale topos of $\mathcal{X}$.
    \item $\operatorname{Qcoh}(\mathcal{X})$ denotes the Grothendieck abelian category of quasi-coherent $\mathcal{O}_\mathcal{X}$-modules on the lisse-\'{e}tale topos of $\mathcal{X}$.
    \item $\operatorname{coh}(\mathcal{X})$ denotes the abelian category of coherent $\mathcal{O}_\mathcal{X}$-modules on the lisse-\'{e}tale topos of $\mathcal{X}$.
    \item If $\mathcal{A}$ is a Grothendieck abelian category, then we let $D(\mathcal{A})$ denote the unbounded derived category of $\mathcal{A}$; superscripts $+$, $-$, $\geq n$, etc., are to be interepreted as usual. Let $\phi \colon M \to N$ be a morphism in $D(\mathcal{A})$ and form the distinguished triangle $M \xrightarrow{\phi} N \to Q \to M[1]$. If $\tau^{\geq m}Q \simeq 0$, then we say that $\phi$ is an $m$-isomorphism; equivalently, $H^i(M) \to H^i(N)$ is surjective when $i=m$ and an isomorphism if $i>m$.
    \item By abuse of notation, $D(\mathcal{X}):= D(\operatorname{Mod}(\mathcal{X}))$ denotes the unbounded derived category of $\operatorname{Mod}(\mathcal{X})$.
    \item $D_{\operatorname{qc}}(\mathcal{X})$ is the strictly full subcategory of $D(\mathcal{X})$ with quasi-coherent cohomology groups.
    \item $D^b_{\operatorname{coh}}(\mathcal{X})$ is the strictly full subcategory of the objects in $D_{\operatorname{qc}}(\mathcal{X})$ with bounded and coherent cohomology.
    \item $D^{+}_{\operatorname{coh}}(\mathcal{X})$ (resp., $D^{-}_{\operatorname{coh}}(\mathcal{X})$) is the strictly full subcategory of the objects in $D_{\operatorname{qc}}^{+} (\mathcal{X})$ (resp. $D_{\operatorname{qc}}^{-} (\mathcal{X})$) with coherent cohomology.
    \item $\operatorname{Perf}(\mathcal{X})$ will denote the triangulated subcategory of $D_{\operatorname{qc}}(\mathcal{X})$ consisting of perfect complexes.
    \item $D_{\operatorname{qc},Z}(\mathcal{X})$ will denote those
      objects in $D_{\operatorname{qc}}(\mathcal{X})$ whose
      (cohomological) support is contained in $Z$.
\end{itemize}

\begin{ack}
    Jack Hall and Fei Peng were partially supported by the Australian Research Council DP210103397 and FT210100405. Pat Lank was supported under the ERC Advanced Grant 101095900-TriCatApp and partially supported by the National Science Foundation under Grant No. DMS-1928930 while in residence at the Simons Laufer Mathematical Sciences Institute (formerly MSRI). The authors are grateful to Timothy De Deyn, Kabeer Manali Rahul, Amnon Neeman, and David Rydh for looking at earlier versions of our work. 
\end{ack}

\section{Preliminaries}
\label{sec:prelim}

This section briefly discusses notions of generation in triangulated categories and various derived categories associated to algebraic stacks. We will freely use the content found in \cite{BVdB:2003, Rouquier:2008, Neeman:2021, ABIM:2010,Hall/Rydh:2017}. Throughout this section, $\mathcal{T}$ will be a triangulated category with shift functor $[1]\colon \mathcal{T} \to \mathcal{T}$ and $\mathcal{T}^c \subseteq \mathcal{T}$ will be its full triangulated subcategory of compact objects.

\subsection{Small generation}
\label{sec:prelim_gen}
Let
$\mathcal{S},\mathcal{S}^\prime$ be full subcategories of
$\mathcal{T}$. Set $\mathcal{S} \star \mathcal{S}^\prime$ to be the
smallest strictly (i.e., closed under isomorphs) full subcategory of
$\mathcal{S}$ consisting of all objects $E$ in $\mathcal{T}$ that can
be obtained as an extension of objects $S$ is in $\mathcal{S}$ and
$S^\prime$ is in $\mathcal{S}^\prime$:
\begin{displaymath}
  S \to E \to S^\prime \to S[1].
\end{displaymath}
We say that $\mathcal{S}$ is \textbf{thick} if it is closed under retracts and is a triangulated subcategory of $\mathcal{T}$. Let  $\langle \mathcal{S} \rangle$ denote the smallest thick subcategory containing $\mathcal{S}$ in $\mathcal{T}$. Also, $\operatorname{add}(\mathcal{S})$ will denote the smallest strictly full subcategory of $\mathcal{T}$ containing $\mathcal{S}$ that is closed under shifts, finite coproducts, and retracts. We now set  $\langle \mathcal{S} \rangle_0 := \operatorname{add}(0)$,  $\langle \mathcal{S} \rangle_1 := \operatorname{add}(\mathcal{S})$  and inductively define
\[
  \langle \mathcal{S} \rangle_n := \operatorname{add} \{
  \operatorname{cone}(\phi) : \phi \in
  \operatorname{Hom}_{\mathcal{T}} (\langle \mathcal{S} \rangle_{n-1},
  \langle \mathcal{S} \rangle_1) \}.
\]
It follows that 
\begin{equation}
  \langle \mathcal{S} \rangle_0 \subseteq \langle \mathcal{S} \rangle_1 \subseteq \cdots \subseteq \bigcup^{\infty}_{n=0} \langle \mathcal{S} \rangle_n = \langle \mathcal{S} \rangle. \label{eq:thick_subcategory_filtration}
\end{equation}
Now let $E,G$ be objects of $\mathcal{T}$. We say that $E$ is
\textbf{finitely built} by $\mathcal{S}$ if $E$ belongs to
$\langle \mathcal{S} \rangle$. The \textbf{level} of $E$ with respect
to $\mathcal{S}$, denoted
$\operatorname{level}_\mathcal{T}^{\mathcal{S}}(E)$, is the minimal
non-negative integer $n$ such that $E$ is in
$\langle \mathcal{S} \rangle_n$; it will be set to $\infty$ if no such
integer exists. Also, $\mathcal{S}$ is said to \textbf{classically
  generate} $\mathcal{T}$ if
$\langle \mathcal{S} \rangle = \mathcal{T}$. If $\mathcal{S}$ consists
of a single object $G$ and classically generates, then we will call
$G$ a \textbf{classical generator}.

We say that  $\mathcal{S}$ \textbf{strongly generates} $\mathcal{T}$ if for some $n\geq 0$ one has $\langle \mathcal{S} \rangle_{n+1} = \mathcal{T}$, and the minimal $n$ such that $\operatorname{level}_\mathcal{T}^{\mathcal{S}} (E)\leq n+1$ for all $E$ in $ \mathcal{T}$ is called the \textbf{generation time} of $\mathcal{S}$. If $\mathcal{S}$ consists of a single object $G$ and strongly generates, then we will say that $G$ is a \textbf{strong generator}.
Let $\dim \mathcal{T}$ denote the \textbf{Rouquier dimension} of $\mathcal{T}$, which is the smallest integer $d$ such that $\langle G \rangle_{d+1} = \mathcal{T}$ for some object $G$ in $ \mathcal{T}$.

An incomplete list of generation results for schemes includes
\cite{Olander:2023,Rouquier:2008,BILMP:2023,Iyengar/Takahashi:2019,Dey/Lank/Takahashi:2023,Aoki:2021,Dey/Lank:2024a,Elagin/Lunts/Schnurer:2020}. We
refer the reader to \cite[Example 2.4]{DeDeyn/Lank/ManaliRahul:2024}
for an explicit list of such objects. We will defer some examples that
are relevant to this article until the end of this section.

\subsection{Big generation}
\label{sec:prelim_big_gen}
We now assume that $\mathcal{T}$ admits all small coproducts. Consider
the following additive subcategories of $\mathcal{T}$:
$\operatorname{Add}(\mathcal{S})$ will be the smallest strictly full
subcategory containing $\mathcal{S}$ and closed under retracts,
shifts, and small coproducts. Next, we inductively define
$\overline{\langle \mathcal{S} \rangle_0} := \operatorname{Add}(0)$,
$\overline{\langle \mathcal{S} \rangle_1} :=
\operatorname{Add}(\mathcal{S})$ and
 \[
   \overline{\langle \mathcal{S} \rangle_n}:= \operatorname{Add} \{ \operatorname{cone}(\phi) : \phi \in \operatorname{Hom}_{\mathcal{T}} (\overline{\langle \mathcal{S} \rangle_{n-1}}, \overline{\langle \mathcal{S} \rangle_1}) \}.
 \]

\subsection{Boundedness and generation for algebraic stacks}
\label{sec:prelim_boundedness}

Let
$\pi \colon \mathcal{Y} \to \mathcal{X}$ be a quasi-compact and
quasi-separated morphism of algebraic stacks. Let $n\geq 0$; then
$\pi$ is said to have \textbf{cohomological dimension $\leq n$} if
$\mathbb{R}^j \pi_\ast E =0$ for all $j> n$ and $E$ in
$\operatorname{Qcoh}(\mathcal{Y})$. We say that $\pi$ has
\textbf{finite cohomological dimension} if it has cohomological
dimension $\leq n$ for some $n$. We say that $\pi$ is
\textbf{concentrated} if for every morphism
$\mathcal{Z} \to \mathcal{X}$ with $\mathcal{Z}$ a quasi-compact
quasi-separated algebraic stack, the projection morphism
$\mathcal{Y} \times_{\mathcal{X}} \mathcal{Z} \to \mathcal{Z}$ has
finite cohomological dimension. If $\mathcal{X}$ is quasi-compact and
quasi-separated, then we say it is \textbf{concentrated} if the
structure morphism $\mathcal{X} \to \operatorname{Spec}( \mathbf{Z})$ is
so.

\begin{example}\label{ex:concentrated}
  In general, $\pi$ being concentrated is subtle, but it does
  happen frequently \cite[\S2]{Hall/Rydh:2017}: representable by algebraic spaces, tame, or relatively affine stabilizers in characteristic $0$. Also, if $\mathcal{X}$ has
  quasi-affine diagonal, then finite cohomological dimension and
  concentrated are equivalent.
\end{example}
We also record here the following simple lemma.
\begin{lemma}\label{lem:concentrated_composition_inner_most}
    Let $f\colon \mathcal{Z} \to \mathcal{Y}$ and $g\colon \mathcal{Y} \to \mathcal{X}$ be quasi-compact and quasi-separated morphisms of algebraic stacks. If $f\circ g$ is concentrated, then $f$ is concentrated.
\end{lemma}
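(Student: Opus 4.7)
The plan is to verify the concentratedness of $f$ directly from the definition: for any morphism $h \colon \mathcal{W} \to \mathcal{Y}$ with $\mathcal{W}$ quasi-compact and quasi-separated, we will show that the base change $f_{\mathcal{W}} \colon \mathcal{Z} \times_{\mathcal{Y}} \mathcal{W} \to \mathcal{W}$ has finite cohomological dimension. (Note that the composable direction must be $g \circ f \colon \mathcal{Z} \to \mathcal{X}$.) The strategy is to factor $f_{\mathcal{W}}$ through the fibre product $\mathcal{Z} \times_{\mathcal{X}} \mathcal{W}$, which has finite cohomological dimension over $\mathcal{W}$ by assumption, and then to argue that the mediating morphism is governed by the diagonal of $g$, hence is also cohomologically bounded.

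Specifically, the factorization we will use is
\[
    \mathcal{Z} \times_{\mathcal{Y}} \mathcal{W} \xrightarrow{p} \mathcal{Z} \times_{\mathcal{X}} \mathcal{W} \xrightarrow{\pi} \mathcal{W},
\]
in which $\pi$ is the base change of $g \circ f$ along $g \circ h \colon \mathcal{W} \to \mathcal{X}$, and $p$ is the natural morphism. A direct check shows that $p$ fits into a Cartesian square whose right edge is the diagonal $\Delta_g \colon \mathcal{Y} \to \mathcal{Y} \times_{\mathcal{X}} \mathcal{Y}$ and whose bottom edge is the canonical morphism $\mathcal{Z} \times_{\mathcal{X}} \mathcal{W} \to \mathcal{Y} \times_{\mathcal{X}} \mathcal{Y}$ induced by $f$ on the first factor and $h$ on the second. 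Thus $p$ is a base change of $\Delta_g$.

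To conclude, $\pi$ has finite cohomological dimension by the hypothesis that $g \circ f$ is concentrated. Since $g$ is quasi-separated, the diagonal $\Delta_g$ is quasi-compact and quasi-separated, and it is automatically representable by algebraic spaces (being the diagonal of an algebraic stack). By \Cref{ex:concentrated}, $\Delta_g$ is therefore concentrated, and so its base change $p$ has finite cohomological dimension, provided we check that the target $\mathcal{Z} \times_{\mathcal{X}} \mathcal{W}$ is quasi-compact and quasi-separated, which follows from these properties of $\mathcal{W}$ and of $g \circ f$. Finite cohomological dimension is preserved under composition, so $f_{\mathcal{W}} = \pi \circ p$ has finite cohomological dimension as well.

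The only mild obstacle is the book-keeping: one must keep careful track of whether fibre products are formed over $\mathcal{Y}$ or over $\mathcal{X}$, and verify that each intermediate stack is quasi-compact and quasi-separated so that the concentrated hypothesis can be applied. Conceptually, the lemma simply reflects the fact that the discrepancy between $\mathcal{Z} \times_{\mathcal{Y}} \mathcal{W}$ and $\mathcal{Z} \times_{\mathcal{X}} \mathcal{W}$ is governed by the diagonal $\Delta_g$, which is always well-behaved enough to transfer finite cohomological dimension from the composite to the inner morphism.
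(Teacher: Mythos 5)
Your proof is correct and is essentially the paper's intended argument: the paper proves this ``by a standard argument using the graph,'' i.e.\ factoring $f$ as $\mathcal{Z}\xrightarrow{\Gamma_f}\mathcal{Z}\times_{\mathcal{X}}\mathcal{Y}\xrightarrow{\mathrm{pr}_2}\mathcal{Y}$, with $\Gamma_f$ a base change of $\Delta_g$ and $\mathrm{pr}_2$ a base change of $g\circ f$, then invoking stability of concentrated morphisms under composition and base change (the cited \cite[Lem.~2.5]{Hall/Rydh:2017}). Your factorization $\mathcal{Z}\times_{\mathcal{Y}}\mathcal{W}\to\mathcal{Z}\times_{\mathcal{X}}\mathcal{W}\to\mathcal{W}$ is precisely this graph factorization after base change along $\mathcal{W}\to\mathcal{Y}$, so the two arguments coincide up to the point in the proof at which one chooses to base change.
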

\begin{proof}
This follows from a standard argument using the graph and \cite[Lem.~2.5]{Hall/Rydh:2017}.
\end{proof}

Recall that a morphism of finite type $\pi \colon \mathcal{Y}\to \mathcal{S}$ between Noetherian algebraic stacks is said to be \textbf{cohomologically proper} if $\mathbb{R}\pi_\ast E$ belongs to $D^{+}_{\operatorname{coh}}(\mathcal{S})$ for each $E$ in $\operatorname{coh}(\mathcal{Y})$. Additionally, we say $\pi \colon \mathcal{Y}\to \mathcal{S}$ is \textbf{universally cohomologically proper} if the projection $\mathcal{Y}\times_{\mathcal{S}}\mathcal{Z} \to \mathcal{Z}$ is cohomologically proper for every morphism $\mathcal{Z} \to \mathcal{S}$. See \cite[$\S 2$]{Alper/Hall/BenjaminLim:2023} for details. For example, a proper morphism of Noetherian algebraic stacks is universally cohomologically proper \cite[Example 2.6]{Alper/Hall/BenjaminLim:2023}. Note that universally cohomologically proper morphisms are universally closed \cite[Proposition 2.4.5]{Halpern-Leistner/Preygel:2023}.

Let $\beta$ be a cardinal. If $\mathcal{X}$ is quasi-compact and
quasi-separated, then we say that it satisfies the
\textbf{$\beta$-Thomason condition} if
$D_{\operatorname{qc}}(\mathcal{X})$ is compactly generated by a
collection of cardinalilty at most $\beta$ and for each closed subset
$Z$ of $|\mathcal{X}|$ with quasi-compact complement, there is a
perfect complex $P$ on $\mathcal{X}$ such that
$\operatorname{supph}(P)=Z$. We say that $\mathcal{X}$ is
\textbf{$\beta$-crisp} if for every representable by algebraic spaces \'{e}tale separated
quasi-compact morphism $\mathcal{W} \to \mathcal{X}$, $\mathcal{W}$
satisfies the $\beta$-Thomason condition.  We will often omit the
$\beta$ to instead say $\mathcal{X}$ satisfies the `Thomason
condition' and is `crisp'.

\begin{remark}
  An equivalent characterization for an algebraic stack to be
  $\beta$-crisp is that for every representable \'{e}tale
  quasi-compact separated morphism $\mathcal{W} \to \mathcal{X}$ and
  every quasi-compact open immersion $j\colon \mathcal{U} \to \mathcal{W}$, the
  triangulated category
  $D_{\operatorname{qc},|\mathcal{W}|\setminus |\mathcal{U}|}
  (\mathcal{W})$ is compactly generated by a collection of
  cardinalilty at most $\beta$ \cite[Lemmas 4.9 \&
  8.2]{Hall/Rydh:2017}.
\end{remark}

\begin{example}\label{ex:strong_generator_examples}
  Let $\mathcal{X}$ be a quasi-compact and quasi-separated algebraic
  stack. Then
  $D_{\operatorname{qc}}(\mathcal{X})^c \subseteq
  \operatorname{Perf}(\mathcal{X})$, with equality if and only if
  $\mathcal{X}$ is concentrated \cite[Remark 4.6]{Hall/Rydh:2017}. If
  $\mathcal{X}$ is regular, then
  $D^b_{\operatorname{coh}}(\mathcal{X})=
  \operatorname{Perf}(\mathcal{X})$ \cite[Proposition
  A.2]{Bergh/Lunts/Schnurer:2016}. If $\mathcal{X}$ has quasi-finite
  and separated diagonal (e.g., an algebraic space) \cite[Theorem
  A]{Hall/Rydh:2017} or is Deligne--Mumford over $\mathbf{Q}$
  \cite[Theorem 7.4]{Hall/Rydh:2018}, then
  $D_{\operatorname{qc}}(\mathcal{X})$ is $1$-crisp, and so
  $\operatorname{Perf}(\mathcal{X})$ admits a classical generator. If
  $\mathcal{X}$ is Noetherian, separated, quasiexcellent and tame of
  finite Krull dimension, then $D^b_{\operatorname{coh}}(\mathcal{X})$
  has a strong generator \cite[Theorem B.1]{Hall/Priver:2024}.
\end{example}

\section{Approximation by compact complexes}\label{sec:approximation}

This section introduces our notion of approximation for algebraic stacks. 

\begin{definition}\label{def:approximation}
    Let $\mathcal{X}$ be an algebraic stack. Let $(T, E, m)$ be a triple consisting of
        \begin{enumerate}
            \item a closed subset $T\subset|\mathcal{X}|$,
            \item a complex $E\in D_{\operatorname{qc}}(\mathcal{X})$, and
            \item an integer $m$.
        \end{enumerate}
        We say \textbf{approximation by compact complexes holds for
        $(T, E, m)$} if there exist a compact complex $P$ on
        $\mathcal{X}$, which is supported on $T$, and an $m$-isomorphism
        $P\to E$ (i.e., the map $H^i(P) \to H^i(E)$ is surjective when $i=m$ and an isomorphism if $i>m$). We say \textbf{approximation by compact complexes holds
        for $\mathcal{X}$} if for every closed subset
        $T\subset |\mathcal{X}|$ with quasi-compact complement, there
        exists an integer $r$ such that approximation by compact complexes
        holds for any triple $(T, E, m)$ as above where
        \begin{enumerate}[label=(\alph*)]
            \item $E$ is an $(m-r)$-pseudocoherent complex in $D_{\operatorname{qc}}(\mathcal{X})$, and
            \item $H^{i}(E)$ is supported on $T$ if $i\geq m-r$.
        \end{enumerate}
\end{definition}
One of our main interests in approximation by compact complexes on stacks is because of the following lemma. 
\begin{lemma}\label{lem:rouq_dim_stacks_via_approx}
    Let $\mathcal{X}$ be a Noetherian algebraic stack satisfying approximation by compacts. If $G$ is an object of $D^b_{\operatorname{coh}}(\mathcal{X})$ and $n\geq 0$, then the following are equivalent:
    \begin{enumerate}
        \item $\operatorname{Perf}(\mathcal{X})$ is contained in $\langle G \rangle_n$
        \item $\langle G \rangle_n = D^b_{\operatorname{coh}}(\mathcal{X})$.
    \end{enumerate}
\end{lemma}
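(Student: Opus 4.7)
The direction $(2) \Rightarrow (1)$ is immediate: since $\mathcal{X}$ is Noetherian, every perfect complex lies in $D^b_{\operatorname{coh}}(\mathcal{X})$, so the assumed equality $\langle G \rangle_n = D^b_{\operatorname{coh}}(\mathcal{X})$ automatically places $\operatorname{Perf}(\mathcal{X})$ inside $\langle G \rangle_n$.

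The substance is in $(1) \Rightarrow (2)$. The inclusion $\langle G \rangle_n \subseteq D^b_{\operatorname{coh}}(\mathcal{X})$ is automatic since $G \in D^b_{\operatorname{coh}}(\mathcal{X})$ and the latter is thick. My plan for the reverse is to use approximation to realize every $E \in D^b_{\operatorname{coh}}(\mathcal{X})$ as the positive truncation of a perfect complex, and then invoke $(1)$. Concretely, fix $E$ and apply approximation by compact complexes (\Cref{def:approximation}) with the closed subset $T = |\mathcal{X}|$, so that condition (b) is vacuous. Since $E$ is pseudocoherent on the Noetherian stack $\mathcal{X}$, condition (a) also holds for every $m$, and we obtain for every integer $m$ a compact (hence perfect) object $P$ and an $m$-isomorphism $\phi \colon P \to E$. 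By $(1)$, $P \in \operatorname{Perf}(\mathcal{X}) \subseteq \langle G \rangle_n$.

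Now I would choose $m$ strictly below the infimum $\inf E$ of the cohomological degrees of $E$. A direct long-exact-sequence computation then identifies $\operatorname{cone}(\phi) \cong (\tau^{\leq m}P)[1]$, so $E \cong \tau^{>m}P$. The truncation triangle
\[
\tau^{\leq m}P \longrightarrow P \longrightarrow E \longrightarrow (\tau^{\leq m}P)[1]
\]
reduces the problem to controlling $\tau^{\leq m}P$, which lies in $D^b_{\operatorname{coh}}(\mathcal{X})$ (Noetherian hypothesis) but is typically not perfect.

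The main obstacle is precisely this: a naive cone argument using the triangle would only yield $E \in \langle G \rangle_{2n}$, not $\langle G \rangle_n$. My strategy to avoid inflating the level is to iterate approximation, applying the same construction to $\tau^{\leq m}P$ with a much smaller parameter $m' \ll m$, producing a further perfect complex and an analogous triangle. Assembling the resulting triangles via the octahedral axiom, and exploiting both the boundedness of $E$ and the fact that the approximation constant $r$ depends only on $\mathcal{X}$ (not on the object being approximated), the aim is to realize $E$ as a retract of an object built entirely from perfect complexes---each of which lies in $\langle G \rangle_n$ by $(1)$---and conclude $E \in \langle G \rangle_n$ using that $\langle G \rangle_n$ is closed under retracts. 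The delicate combinatorial step, and the hardest part of the argument, is to ensure that the iterated octahedral assembly produces such a retract whose ambient level remains bounded by $n$, rather than growing with the number of iterations.
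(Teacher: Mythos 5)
Your handling of $(2) \Rightarrow (1)$ and of the inclusion $\langle G \rangle_n \subseteq D^b_{\operatorname{coh}}(\mathcal{X})$ is correct, and the opening moves of $(1) \Rightarrow (2)$ --- choosing $m < \inf E$, producing a compact (hence perfect) $P$ with an $m$-isomorphism $\phi \colon P \to E$, and identifying $E \cong \tau^{>m}P$ --- are also correct. The endgame you sketch, however, cannot work. Any object obtained from finitely many perfect complexes by iterated cones is again perfect, and a retract of a perfect complex is perfect, since $\operatorname{Perf}(\mathcal{X})$ is a thick subcategory of $D_{\operatorname{qc}}(\mathcal{X})$; so ``realize $E$ as a retract of an object built entirely from perfect complexes'' would force $E$ itself to be perfect, which fails for general $E \in D^b_{\operatorname{coh}}(\mathcal{X})$ as soon as $\mathcal{X}$ is singular. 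Even setting that aside, assembling $k$ perfect complexes by cones bounds the ambient level over $G$ only by $kn$, and each further octahedral step increases $k$, so the level estimate degrades under iteration rather than stabilizing. The difficulty you flag as a ``delicate combinatorial step'' is therefore a structural obstruction to this route, not a bookkeeping issue.

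For comparison, the paper's proof is a one-line citation: it invokes \cite[Proposition~3.5]{Lank/Olander:2024} together with \Cref{def:approximation}. That proposition is precisely the statement at hand (in the scheme setting), and its proof is a genuine input, not a routine truncation-and-octahedral manipulation. A self-contained argument would need to reproduce the mechanism of Lank--Olander --- where the hypothesis $\operatorname{Perf}(\mathcal{X}) \subseteq \langle G \rangle_n$ together with the bounded amplitude of $G$ is used to control how truncations interact with the $n$-step filtration --- rather than iterate approximations of the successive cones $\tau^{\leq m}P$.
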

\begin{proof}
    This follows from \Cref{def:approximation} and \cite[Proposition 3.5]{Lank/Olander:2024}. 
\end{proof}
We also collect here some useful refinements of the results of
\cite[Appendix A]{Hall:2022} on approximations of certain
non-pseudocoherent complexes. 

\begin{lemma}\label{lem:lift-compact}
  Let $\mathcal{X}$ be a quasi-compact and quasi-separated algebraic
  stack that satisfies approximation by compact complexes. Let
  $\phi \colon P \to M$ be a morphism in
  $D_{\operatorname{qc}}(\mathcal{X})$, where $P$ is compact and $M$
  is pseudocoherent. Then there exists an $r_P$, independent of
  $M$, such that for all $m\leq r_P$ there is an $m$-isomorphism
  $q \colon Q \to M$, where $Q$ is compact, and a lift
  $\phi' \colon P \to Q$ such that $q\circ \phi' = \phi$.
\end{lemma}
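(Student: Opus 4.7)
The plan is to construct $Q$ as the fiber in a compact approximation of the cone triangle of $\phi$. First I would form the distinguished triangle
\[
  P \xrightarrow{\phi} M \xrightarrow{u} C \xrightarrow{\delta} P[1]
\]
in $D_{\operatorname{qc}}(\mathcal{X})$. Because $P$ is compact on the quasi-compact and quasi-separated stack $\mathcal{X}$, it is perfect by \cite[Lemma~4.4]{Hall/Rydh:2017}, hence pseudocoherent; since pseudocoherent complexes are closed under cones, $C$ is pseudocoherent as well. Applying the approximation hypothesis with $T = |\mathcal{X}|$ (whose complement is empty, hence quasi-compact) furnishes an integer $r$, depending only on $\mathcal{X}$, such that every $(m-r)$-pseudocoherent complex admits an $m$-isomorphism from a compact object. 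Since $C$ is pseudocoherent---in particular $(m-r)$-pseudocoherent for every $m$---for each integer $m$ I obtain a compact $\tilde{Q}$ and an $m$-isomorphism $q_0 \colon \tilde{Q} \to C$.

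I then complete $\delta \circ q_0 \colon \tilde{Q} \to P[1]$ to a distinguished triangle
\[
  P \xrightarrow{\phi'} Q \xrightarrow{v} \tilde{Q} \xrightarrow{\delta \circ q_0} P[1],
\]
which defines $Q$. Since compact objects in $D_{\operatorname{qc}}(\mathcal{X})$ form a thick subcategory and both $P$ and $\tilde{Q}$ are compact, $Q$ is compact. The square built from $q_0$ and the two connecting maps commutes tautologically, so axiom (TR3) produces a morphism of triangles $(\mathrm{id}_P, q, q_0, \mathrm{id}_{P[1]})$ from the $Q$-triangle to the $C$-triangle; in particular I obtain $q \colon Q \to M$ with $q \circ \phi' = \phi$. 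To verify that $q$ is an $m$-isomorphism I would invoke the octahedral axiom for the factorization $\phi = q \circ \phi'$ (equivalently, use the standard fact that in a morphism of triangles with an isomorphism on one vertical the other two cones are isomorphic). This yields $\mathrm{cone}(q) \simeq \mathrm{cone}(q_0)$, which has vanishing cohomology in degrees $\geq m$, so $q$ is an $m$-isomorphism.

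Since the construction succeeds for every integer $m$, any choice of $r_P$ works; in particular $r_P$ is independent of $M$. I do not anticipate a serious obstacle in carrying out this plan. The supporting facts needed---pseudocoherence of the cone, the thick-subcategory structure of compact objects, and the octahedral identification of the two cones---are all standard, so the bulk of the work is bookkeeping: unwinding definitions to confirm the commutativity required for (TR3) and to identify the induced connecting map in the octahedral triangle with $q_0$.
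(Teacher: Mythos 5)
Your construction is a genuinely different route from the paper's. The paper first invokes \cite[Lemma~4.5]{Hall/Rydh:2017} to produce $r_P$ from the finite projective dimension of the compact $P$, approximates $M$ directly by a compact $m$-isomorphism $q \colon Q \to M$, and then argues that $\phi$ must factor through $q$ because $\operatorname{Hom}(P,\operatorname{cone}(q))$ vanishes once $m\leq r_P$. You instead approximate the cone $C$ of $\phi$ and pull the approximation back to a compact $Q$ over $P$, so that the lift is built into the construction from the start and no Ext-dimension bound is ever used; as a byproduct your argument yields the conclusion for every $m$, not merely $m\leq r_P$. Both routes are legitimate, and yours is slightly stronger.

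There is, however, one step that is not airtight as phrased. After producing $q$ via TR3 you assert that the octahedral axiom, or the ``standard fact'' about morphisms of triangles with an isomorphism on one vertical, gives $\operatorname{cone}(q)\simeq\operatorname{cone}(q_0)$. That is not quite a citable fact: the $3\times3$ lemma, applied to the commuting square in $(\operatorname{id}_P,q)$, produces \emph{some} map $\tilde Q\to C$ filling in the third column, but nothing guarantees this map equals your chosen $q_0$ — the completing morphism in TR3 and in the $3\times3$ lemma is not unique, and different completions need not have isomorphic cones. The cleanest repair is to bypass TR3 altogether: apply the octahedral axiom once to the factorisation $\tilde Q\xrightarrow{q_0} C\xrightarrow{\delta} P[1]$ of the map $\delta\circ q_0$, whose three triangles are the cone triangle of $q_0$, the rotated triangle of $\phi$, and the rotated triangle defining $Q$. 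The resulting fourth triangle is (up to sign and shift) $\operatorname{cone}(q_0)[-1]\to Q\to M\to\operatorname{cone}(q_0)$, and the octahedral compatibilities simultaneously force the middle map $q \colon Q\to M$ to satisfy $q\circ\phi'=\phi$. Alternatively, keep your TR3 step but replace the cone comparison by the five lemma applied to the commuting ladder of long exact cohomology sequences coming from the morphism of triangles $(\operatorname{id}_P, q, q_0)$; this shows directly that $H^i(q)$ is an isomorphism for $i>m$ and surjective for $i=m$. With either fix the argument is complete.
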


\begin{proof}
  By \cite[Lemma 4.5]{Hall/Rydh:2017}, there is an $r_P$ such that
  \[
    \operatorname{Hom}_{\mathcal{O}_{\mathcal{X}}}(P,N) \simeq
    \operatorname{Hom}_{\mathcal{O}_{\mathcal{X}}}(P,\tau^{\geq m}N),
  \]
  whenever $m\leq r_P$ for all
  $N\in D_{\operatorname{qc}}(\mathcal{X})$. If $m\leq r_P$, then
  there is a compact complex $Q$ and an $m$-isomorphism
  $q \colon Q \to M$. It follows that $\phi$ factors
  through $q$ as claimed.
\end{proof}

If $\mathcal{W}$
is a Noetherian algebraic stack, let
$\overline{\operatorname{coh}}(\mathcal{W}) \subset
\operatorname{Qcoh}(\mathcal{W})$ denote the full subcategory with
objects those quasi-coherent $\mathcal{O}_{\mathcal{W}}$-modules $M$
such that $\Gamma(\operatorname{Spec}(R),M)$ is a countably generated
$R$-module for all smooth morphisms
$\operatorname{Spec}(R) \to \mathcal{W}$. By
\cite[Lem.~A.1]{Hall:2022},
$\overline{\operatorname{coh}}(\mathcal{W})$ is a Serre subcategory of
$\operatorname{Qcoh}(\mathcal{W})$. In particular, the full
subcategory
$D^b_{\overline{\operatorname{coh}}}(\mathcal{W}) \subseteq
D_{\operatorname{qc}}(\mathcal{W})$ of bounded complexes of
$\mathcal{O}_{\mathcal{W}}$-modules with cohomology groups in
$\overline{\operatorname{coh}}(\mathcal{W})$ is a triangulated
subcategory.

\begin{proposition}\label{prop:approximation-countable}
  Let $\mathcal{X}$ be a Noetherian algebraic stack that satisfies
  approximation by compact complexes. If
  $M \in D^b_{\overline{\operatorname{coh}}}(\mathcal{X})$, then
  $M\simeq \operatorname{hocolim}_r P_r$, where
  $P_r \in D_{\operatorname{qc}}(\mathcal{X})^c$. Moreover, for each
  $s \in \mathbf{Z}$, there exists $r_s$ such that
  $\mathcal{H}^s(P_r) \to \mathcal{H}^s(M)$ is a monomorphism for all
  $r>r_s$.
\end{proposition}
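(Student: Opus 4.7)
The plan is to reduce to the pseudocoherent case---where \Cref{lem:lift-compact} and the approximation hypothesis directly apply---by first exhibiting $M$ as a homotopy colimit of bounded complexes with coherent cohomology, and then approximating each such complex by a compact complex in a compatible way.

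In the first stage, I would construct a compatible system $\{N_r \to N_{r+1} \to M\}_{r \geq 0}$ with $N_r \in D^b_{\operatorname{coh}}(\mathcal{X})$ satisfying $M \simeq \operatorname{hocolim}_r N_r$ and such that each transition map $\mathcal{H}^s(N_r) \to \mathcal{H}^s(M)$ is a monomorphism whose image exhausts $\mathcal{H}^s(M)$ as $r \to \infty$. Since $\mathcal{X}$ is Noetherian and each $\mathcal{H}^s(M)$ lies in $\overline{\operatorname{coh}}(\mathcal{X})$, we may fix increasing exhausting filtrations $F^s_r \subseteq \mathcal{H}^s(M)$ by coherent subsheaves. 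Assuming $M$ has cohomological amplitude in $[a,b]$, I would build $N_r$ by iteratively pulling back the cotruncation triangles $\tau^{\leq t-1}M \to \tau^{\leq t}M \to \mathcal{H}^t(M)[-t]$ along the inclusions $F^t_r \hookrightarrow \mathcal{H}^t(M)$ for $t = b, b-1, \ldots, a$, using that on a Noetherian stack $D^b_{\operatorname{coh}}(\mathcal{X})$ is closed under extensions in $D_{\operatorname{qc}}(\mathcal{X})$. I expect this to be the main obstacle: care is required to ensure that the extension classes appearing in the iterated pullbacks can be realized with coherent target (possibly after enlarging the $F^s_r$) and that the $N_r$ assemble into a genuinely directed system whose homotopy colimit recovers $M$.

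In the second stage, each $N_r$ is pseudocoherent as a bounded complex with coherent cohomology on a Noetherian stack. Starting with an $m_0$-isomorphism $P_0 \to N_0$ from a compact $P_0$ provided by the approximation hypothesis, I would inductively apply \Cref{lem:lift-compact} to the composite $P_r \to N_r \to N_{r+1}$ to obtain a compact $P_{r+1}$ together with an $m_{r+1}$-isomorphism $P_{r+1} \to N_{r+1}$ and a factorization $P_r \to P_{r+1}$ compatible with $N_r \to N_{r+1}$. Choosing $m_{r+1} \leq \min\{m_r - 1, r_{P_r}\}$ ensures $m_r \to -\infty$.

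To conclude, set $P := \operatorname{hocolim}_r P_r$, with the induced map $P \to M$ coming from the compatible maps $P_r \to N_r \to M$. Because $P_r \to N_r$ is an $m_r$-isomorphism, the map $\mathcal{H}^s(P_r) \to \mathcal{H}^s(N_r)$ is an isomorphism whenever $m_r < s$; composing with the injection $\mathcal{H}^s(N_r) \hookrightarrow \mathcal{H}^s(M)$ yields the desired monomorphism $\mathcal{H}^s(P_r) \hookrightarrow \mathcal{H}^s(M)$ for all $r$ larger than some $r_s$. Passing to the colimit in $r$, $\mathcal{H}^s(P) = \operatorname{colim}_r \mathcal{H}^s(P_r) = \operatorname{colim}_r \mathcal{H}^s(N_r) = \mathcal{H}^s(M)$, so $P \to M$ is a quasi-isomorphism, proving both assertions.
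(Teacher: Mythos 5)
Your overall architecture is the same as the paper's: first write $M$ as a filtered homotopy colimit of complexes in $D^b_{\operatorname{coh}}(\mathcal{X})$ whose cohomologies inject into that of $M$, then inductively lift along the system using \Cref{lem:lift-compact} with indices $m_r \to -\infty$, and finally take the hocolim of the compacts. Your ``Stage 2'' reproduces the paper's argument almost verbatim, and your concluding passage (cohomology of a hocolim is the colimit of cohomologies; for $r$ large with $m_r<s$ the map $\mathcal{H}^s(P_r)\to\mathcal{H}^s(N_r)$ is an isomorphism, hence composing with the monomorphism $\mathcal{H}^s(N_r)\hookrightarrow\mathcal{H}^s(M)$ gives what is claimed) is sound.

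The place where your writeup diverges is ``Stage 1.'' You try to construct the directed system $\{N_r\}$ by hand by iterated pullbacks of truncation triangles along coherent subsheaves $F^t_r\subseteq\mathcal{H}^t(M)$, and you correctly flag the issue: after the first pullback at the top degree, the connecting map $F^b_r[-b]\to(\tau^{\leq b-1}M)[1]$ in general does not factor through a chosen modification $(\tau^{\leq b-1}M)'[1]$, since $F^b_r[-b]$ is merely coherent and not compact in $D_{\operatorname{qc}}(\mathcal{X})$, so $\operatorname{Hom}(F^b_r[-b],-)$ need not commute with the homotopy colimit over modifications. You gesture at ``enlarging the $F^s_r$'' to fix this but do not carry it out, so as written Stage 1 is an unresolved gap. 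The content you are reproving is exactly \cite[Lemma A.3]{Hall:2022}: this is already available (and the paper uses it precisely here, and again in the proof of \Cref{prop:verdier}, where the monomorphism property of $\mathcal{H}^k(M_s)\to\mathcal{H}^k(M)$ is also invoked). The clean fix is to replace your hand-rolled Stage 1 with a citation of that lemma; everything downstream then goes through unchanged. If you insist on reproving it, you would need to make the enlargement argument precise, which is a nontrivial d\'evissage and not needed for the purposes of this proposition.
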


\begin{proof}
  By \cite[Lemma A.3]{Hall:2022}, we may write
  $M = \operatorname{hocolim}_n M_n$, where
  $M_n \in D^b_{\operatorname{coh}}(\mathcal{X})$. Now choose a
  compact $P_1$ and a $0$-isomorphism $p_1 \colon P_1 \to M_1$. Let
  $m_1 = 0$. By \Cref{lem:lift-compact}, we may inductively
  factor the composition $P_i \to M_i \to M_{i+1}$ to an
  $m_{i+1}=\min\{r_{P_i},-i-1\}$-approximation
  $p_{i+1}\colon P_{i+1} \to M_{i+1}$. Let $P = \operatorname{hocolim}_i P_i$;
  then there is an induced morphism $P \to M$, which is clearly a quasi-isomorphism.
\end{proof}

\section{Algebraic stacks with the (compact) resolution property}
\label{sec:resolution-property}

In this section, we establish that approximation by compact complexes
holds for algebraic stacks with the compact resolution property. We
begin with some definitions. 

\begin{definition}(see \cite[$\S 7$]{Hall/Rydh:2017})\label{def:compact_RP}
    Let $\mathcal{X}$ be an algebraic stack. A vector bundle $E$ on
    $\mathcal{X}$ is \textbf{compact} if $E[0]$ is a compact object of
    $D_{\operatorname{qc}}(\mathcal{X})$.  If $\mathcal{X}$ is
    quasi-compact and quasi-separated, we say that it has the
    \textbf{(compact) resolution property} if every quasi-coherent sheaf
    on $\mathcal{X}$ admits an epimorphism from a direct sum of (compact)
    vector bundles.
\end{definition}

A quasi-compact and quasi-separated algebraic stack with affine
stabilizers has the resolution property if, and only if, it can be
expressed as a quotient of a quasi-affine scheme by
$\operatorname{GL}_N$ for some $N>0$
\cite{Totaro:2004,Gross:2017}. For example, every quasi-projective
scheme has the (compact) resolution property. If $\mathcal{X}$ is
concentrated, then every vector bundle is compact; hence, the resolution
property coincides with the compact resolution property. The following
notation will be useful.

\begin{notation}
    \hfill
    \begin{enumerate}
        \item Let $\mathcal{X}$ be an algebraic stack. If $N$ belongs to $D_{\operatorname{qc}}(\mathcal{X})$, then set 
        \begin{displaymath}
            \operatorname{pd}(N) := \sup_{M \in \operatorname{QCoh}(\mathcal{X})}\{ d \colon \operatorname{Ext}^d_{\mathcal{O}_{\mathcal{X}}}(N,M) \neq 0\}.
        \end{displaymath}
        We will also set
        $\operatorname{cd}(\mathcal{X}):=\operatorname{pd}(\mathcal{O}_{\mathcal{X}})$, the \textbf{cohomological dimension} of $\mathcal{X}$ \cite[Proposition 4.5 \& Remark 4.6]{Hall/Rydh:2017}. If $N$ is compact,
        then $\operatorname{pd}(N) < \infty$ \cite[Lemma
        4.5]{Hall/Rydh:2017}.
      \item If $h \colon \mathcal{V} \to \mathcal{X}$ is a
        quasi-compact and quasi-separated morphism of algebraic
        stacks, then we set
        \begin{displaymath}
          \operatorname{cd}(h) := \sup_{\operatorname{Spec}(A) \to \mathcal{X}} \operatorname{cd}(\mathcal{V} \times_{\mathcal{X}} \operatorname{Spec}(A)).
        \end{displaymath}
        If $\mathcal{X}$ has quasi-affine diagonal, then 
        $\operatorname{cd}(h) \leq \operatorname{cd}(\mathcal{V}
        \times_{\mathcal{X}} \operatorname{Spec} (B))$ for every faithfully flat covering
        $\operatorname{Spec} (B) \to \mathcal{X}$ \cite[Lemma 2.2(6)]{Hall/Rydh:2017}.
    \end{enumerate}
\end{notation}

The following lemma will be helpful for us and is similar to \cite[Lemma 7.6]{Hall/Rydh:2017}.

\begin{lemma}\label{lem:bounded}
    Let $\mathcal{X}$ be an algebraic stack. Suppose $E$ is a vector bundle on $\mathcal{X}$.
    \begin{enumerate}
    \item  \label{LI:bounded:tensor} If $\operatorname{pd}(E) \leq r$ and $V$ is a vector bundle on $\mathcal{X}$, then $\operatorname{pd}(E\otimes_{\mathcal{O}_{\mathcal{X}}} V) \leq r$.
    \item  \label{LI:bounded:dual} If $\operatorname{pd}(E) \leq r$, then $\operatorname{pd}(E^\vee) \leq r$.
    \item \label{LI:bounded:2of3} Consider a short exact sequence of vector bundles on $\mathcal{X}$:
    \begin{displaymath}
        0 \to E \to F \to G \to 0.
      \end{displaymath}
      If $\operatorname{pd}(F) \leq r$ and one of
      $\operatorname{pd}(E)$ or $\operatorname{pd}(F)$ are $\leq r$,
      then the other is too.
    \end{enumerate}
\end{lemma}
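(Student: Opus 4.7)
The plan is to reduce all three parts to standard manipulations with $\operatorname{Ext}$. The key tool is that for any vector bundle $V$ on $\mathcal{X}$, the sheaf $\mathcal{H}om_{\mathcal{O}_{\mathcal{X}}}(V, M) = V^\vee \otimes M$ is exact in $M$, so the local-global spectral sequence collapses to give
\[
\operatorname{Ext}^d_{\mathcal{O}_{\mathcal{X}}}(V, M) \;\cong\; H^d(\mathcal{X},\, V^\vee \otimes M).
\]
Combined with the tensor-hom adjunction $\mathcal{H}om(A \otimes V, B) \cong \mathcal{H}om(A, V^\vee \otimes B)$, which is available whenever $V$ is locally free of finite rank, one also has
\[
\operatorname{Ext}^d_{\mathcal{O}_{\mathcal{X}}}(A \otimes V, B) \;\cong\; \operatorname{Ext}^d_{\mathcal{O}_{\mathcal{X}}}(A, V^\vee \otimes B),
\]
and the right-hand side remains quasi-coherent whenever the inputs are.

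Part (1) is then immediate: apply the second identification with $A = E$ and $V$ the given vector bundle, and observe that $V^\vee \otimes M$ is quasi-coherent when $M$ is, so the hypothesis $\operatorname{pd}(E) \leq r$ forces $\operatorname{Ext}^d(E \otimes V, M) \cong \operatorname{Ext}^d(E, V^\vee \otimes M) = 0$ for $d > r$. Part (2) is obtained by combining the first identification for $V = E$ and $V = E^\vee$: the statements $\operatorname{pd}(E) \leq r$ and $\operatorname{pd}(E^\vee) \leq r$ then become, respectively, the vanishing of $H^d(\mathcal{X},\, E^\vee \otimes N)$ and of $H^d(\mathcal{X},\, E \otimes N)$ for every $N \in \operatorname{Qcoh}(\mathcal{X})$ and $d > r$. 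These two vanishing statements can be passed between using part (1), the reflexivity $E^{\vee \vee} \cong E$, and the evaluation/coevaluation maps of the duality pairing $E \otimes E^\vee \to \mathcal{O}_{\mathcal{X}}$.

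For part (3), a short exact sequence of vector bundles is locally split, hence stays exact after applying $\mathcal{H}om(-, M)$ termwise (equivalently, after tensoring with $M$), producing the long exact sequence
\[
\cdots \to \operatorname{Ext}^{d-1}(E, M) \to \operatorname{Ext}^d(G, M) \to \operatorname{Ext}^d(F, M) \to \operatorname{Ext}^d(E, M) \to \operatorname{Ext}^{d+1}(G, M) \to \cdots
\]
for every quasi-coherent $M$. A short diagram chase in this sequence under the given hypotheses delivers the asserted bound on $\operatorname{pd}$ of the remaining vector bundle. The main obstacle is expected to be part (2): while the statement is symmetric in $E$ and $E^\vee$, the quantities $\operatorname{pd}(E)$ and $\operatorname{pd}(E^\vee)$ a priori measure distinct twisted cohomological dimensions of $\mathcal{X}$, and passing between them requires genuine use of the duality pairing rather than a formal consequence of part (1) alone.
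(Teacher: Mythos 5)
Parts (1) and (2) of your proposal are correct and follow essentially the same route as the paper: tensor--hom adjunction for (1), and for (2) the retraction of $E^\vee$ off $E^\vee \otimes E \otimes E^\vee$ (via the evaluation/coevaluation zigzag), combined with (1) after rewriting $E^\vee \otimes E \otimes E^\vee = E \otimes (E^\vee \otimes E^\vee)$. Your closing observation—that (2) is the genuinely subtle step and requires honest use of the duality pairing—is exactly right.

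However, part (3) has a real gap. The long exact sequence in $\operatorname{Ext}$ arising from the locally split short exact sequence gives, for each quasi-coherent $M$,
\begin{displaymath}
    \cdots \to \operatorname{Ext}^{d-1}(E, M) \to \operatorname{Ext}^d(G, M) \to \operatorname{Ext}^d(F, M) \to \operatorname{Ext}^d(E, M) \to \operatorname{Ext}^{d+1}(G, M) \to \cdots.
\end{displaymath}
A diagram chase here yields only one of the two implications. If $\operatorname{pd}(F), \operatorname{pd}(G) \leq r$, then for $d > r$ one sandwiches $\operatorname{Ext}^d(E, M)$ between $\operatorname{Ext}^d(F, M) = 0$ and $\operatorname{Ext}^{d+1}(G, M) = 0$, giving $\operatorname{pd}(E) \leq r$. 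But in the other direction—$\operatorname{pd}(E), \operatorname{pd}(F) \leq r$, want $\operatorname{pd}(G) \leq r$—the chase breaks down at $d = r+1$: the segment $\operatorname{Ext}^{r}(E, M) \to \operatorname{Ext}^{r+1}(G, M) \to \operatorname{Ext}^{r+1}(F, M) = 0$ only shows that $\operatorname{Ext}^{r+1}(G, M)$ is a quotient of $\operatorname{Ext}^{r}(E, M)$, which need not vanish. The long exact sequence alone gives $\operatorname{pd}(G) \leq r+1$, not $\operatorname{pd}(G) \leq r$.

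This is exactly why the paper does not run a single chase for (3). The second direction is closed by dualizing: apply $(-)^\vee$ to get $0 \to G^\vee \to F^\vee \to E^\vee \to 0$, transfer the bounds $\operatorname{pd}(E^\vee), \operatorname{pd}(F^\vee) \leq r$ via part (2), invoke the already-settled direction of (3) (middle and quotient bounded forces sub bounded) to get $\operatorname{pd}(G^\vee) \leq r$, and apply (2) once more to recover $\operatorname{pd}(G) \leq r$. So part (3), like part (2), genuinely needs the duality pairing, not just a diagram chase; your proof should be amended to include this reduction.
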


\begin{proof}
    \hfill
    \begin{enumerate}
        \item If $V$ is a vector bundle, then there exists an isomorphism: 
        \begin{displaymath}
            \operatorname{Ext}^d_{\mathcal{O}_{\mathcal{X}}}(E\otimes_{\mathcal{O}_{\mathcal{X}}} V, M) \cong \operatorname{Ext}^d_{\mathcal{O}_{\mathcal{X}}}(E, V^\vee \otimes_{\mathcal{O}_{\mathcal{X}}} M).
        \end{displaymath}
        The desired claim now follows directly.
        \item We know that the map $E^\vee \to E^\vee \otimes_{\mathcal{O}_{\mathcal{X}}} E \otimes_{\mathcal{O}_{\mathcal{X}}} E^\vee$ admits a retraction, so it suffices to prove that $\operatorname{pd}(E^\vee \otimes_{\mathcal{O}_{\mathcal{X}}} E \otimes_{\mathcal{O}_{\mathcal{X}}} E^\vee) \leq r$, which follows from \eqref{LI:bounded:tensor}. \item If both $\operatorname{pd}(F)$, $\operatorname{pd}(G)$ are at most $r$, then it follows from the long exact sequence in $\operatorname{Ext}$ that $\operatorname{pd}(E) \leq r$. Consider the case where $\operatorname{pd}(E)$, $\operatorname{pd}(F) \leq r$. We can dualize the defining sequence to obtain a short exact sequence of vector bundles: 
        \begin{displaymath}
            0 \to G^\vee \to F^\vee \to E^\vee \to 0.
        \end{displaymath}
        By \eqref{LI:bounded:dual}, $\operatorname{pd}(F^\vee)$ and $\operatorname{pd}(E^\vee)$ are both $\leq r$. Now from the case already considered, we see that $\operatorname{pd}(G^\vee) \leq r$. The desired claim follows if we apply \eqref{LI:bounded:dual} to $G^\vee$.\qedhere
    \end{enumerate}
\end{proof}

\begin{lemma}\label{lem:bound-strata}
  Let $j \colon \mathcal{U} \to \mathcal{X}$ be an open immersion of
  quasi-compact and quasi-separated algebraic stacks. Let
  $i\colon \mathcal{Z} \hookrightarrow \mathcal{X}$ be a finitely
  presented closed immersion with
  $|\mathcal{X}|\setminus |i(\mathcal{Z})| = |\mathcal{U}|$. If $E$ is
  a compact vector bundle on $\mathcal{X}$, then
    \begin{displaymath}
        \operatorname{pd}(E) \leq \max\{ \operatorname{pd}(j^\ast E), \operatorname{pd}(i^\ast E) + \operatorname{cd}(j) + 1\}.
    \end{displaymath}
\end{lemma}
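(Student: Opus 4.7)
The plan is to bound $\operatorname{Ext}^d_{\mathcal{O}_{\mathcal{X}}}(E,M)$ uniformly for every $M\in\operatorname{Qcoh}(\mathcal{X})$ via the Verdier localization triangle
\[
  \mathbb{R}\underline{\Gamma}_{\mathcal{Z}}(M) \to M \to \mathbb{R}j_*j^*M
\]
recorded in one of the appendices. Applying $R\operatorname{Hom}_{\mathcal{O}_{\mathcal{X}}}(E,-)$ produces a long exact sequence in Ext groups, so it suffices to bound the Ext groups into each of the two outer terms.

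For the open term, adjunction gives $\operatorname{Ext}^d_{\mathcal{O}_{\mathcal{X}}}(E,\mathbb{R}j_*j^*M) \simeq \operatorname{Ext}^d_{\mathcal{O}_{\mathcal{U}}}(j^*E,j^*M)$, which vanishes for $d > \operatorname{pd}(j^*E)$ since $j^*M \in \operatorname{Qcoh}(\mathcal{U})$. For the closed term, the aim is to show $\operatorname{Ext}^d_{\mathcal{O}_{\mathcal{X}}}(E,\mathbb{R}\underline{\Gamma}_{\mathcal{Z}}(M)) = 0$ for $d > \operatorname{pd}(i^*E) + \operatorname{cd}(j) + 1$. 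By the definition of $\operatorname{cd}(j)$ and flat base change to a smooth affine cover, $\mathbb{R}j_*j^*M$ lies in $D^{[0,\operatorname{cd}(j)]}_{\operatorname{qc}}(\mathcal{X})$, so the triangle places $\mathbb{R}\underline{\Gamma}_{\mathcal{Z}}(M)$ in $D^{[0,\operatorname{cd}(j)+1]}_{\operatorname{qc}}(\mathcal{X})$ with each cohomology sheaf set-theoretically supported on $|\mathcal{Z}|$ (since $j^*$ kills the local cohomology complex).

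The key technical claim is that, for any quasi-coherent sheaf $N$ on $\mathcal{X}$ with set-theoretic support in $|\mathcal{Z}|$, $\operatorname{Ext}^d_{\mathcal{O}_{\mathcal{X}}}(E,N) = 0$ for $d > \operatorname{pd}(i^*E)$. Since $i$ is finitely presented, the ideal sheaf $\mathcal{I}$ of $\mathcal{Z}$ is locally finitely generated, and $N$ equals the filtered colimit of its subsheaves $N_n$ annihilated by $\mathcal{I}^n$. Each $N_n$ carries a finite filtration whose successive quotients are annihilated by $\mathcal{I}$, hence of the form $i_*\bar{N}_k$ for some $\bar{N}_k \in \operatorname{Qcoh}(\mathcal{Z})$. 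Adjunction, combined with $\mathbb{L}i^*E \simeq i^*E$ (valid since $E$ is a vector bundle), yields $\operatorname{Ext}^d_{\mathcal{O}_{\mathcal{X}}}(E,i_*\bar{N}_k) \simeq \operatorname{Ext}^d_{\mathcal{O}_{\mathcal{Z}}}(i^*E,\bar{N}_k) = 0$ for $d > \operatorname{pd}(i^*E)$. The long exact sequences from the finite filtration propagate the vanishing to each $N_n$, and compactness of $E$ lets me pass it through the filtered colimit $N = \operatorname{colim}_n N_n$.

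To conclude, I would invoke the bounded hypercohomology spectral sequence
\[
  E_2^{p,q} = \operatorname{Ext}^p_{\mathcal{O}_{\mathcal{X}}}(E, H^q(\mathbb{R}\underline{\Gamma}_{\mathcal{Z}}(M))) \Longrightarrow \operatorname{Ext}^{p+q}_{\mathcal{O}_{\mathcal{X}}}(E, \mathbb{R}\underline{\Gamma}_{\mathcal{Z}}(M)),
\]
which converges strongly since $\mathbb{R}\underline{\Gamma}_{\mathcal{Z}}(M)$ has cohomology only in $[0,\operatorname{cd}(j)+1]$. The key claim forces $E_2^{p,q} = 0$ whenever $p > \operatorname{pd}(i^*E)$, so the abutment vanishes in total degree exceeding $\operatorname{pd}(i^*E) + \operatorname{cd}(j) + 1$, as required. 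The main technical obstacle is the filtration argument above: exhibiting an arbitrary quasi-coherent sheaf set-theoretically supported on $\mathcal{Z}$ as a filtered colimit of sheaves with finite $i_*$-filtrations. This is where the finitely presented hypothesis on $i$ is essential in order to guarantee that powers of $\mathcal{I}$ behave well in the (possibly non-Noetherian) setting.
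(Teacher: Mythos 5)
Your proof is correct. The paper itself only says ``Argue as in \cite[Lemma 2.7]{Hall/Rydh:2015},'' so it defers to an external argument rather than reproducing one; your local-cohomology decomposition, dévissage of $\mathcal{I}$-power torsion sheaves over the finitely presented closed substack, and the hyper-Ext spectral sequence give a complete and natural proof of the stated bound and are, as far as I can tell, essentially the argument the cited lemma carries out. The one cosmetic slip is attributing the triangle $\mathbb{R}\underline{\Gamma}_{\mathcal{Z}}(M)\to M\to\mathbb{R}j_\ast j^\ast M$ to an appendix of the paper: neither appendix records it, though it is simply the fibre of the unit $M\to\mathbb{R}j_\ast j^\ast M$ and needs no external input.
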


\begin{proof}
  Argue as in \cite[Lemma 2.7]{Hall/Rydh:2015}.
\end{proof}
The following result is surprisingly difficult to establish in positive and mixed characteristics. Perhaps there is a simpler approach, but we were unable to find one.
\begin{theorem}\label{thm:uniform-bound-ext-dim}
    Let $\mathcal{X}$ be a quasi-compact and quasi-separated algebraic stack. If $\mathcal{X}$ has the compact resolution property, then there exists an integer $d$ such that $\operatorname{pd}(E) \leq d$ for all compact vector bundles $E$ on $\mathcal{X}$. Moreover, if $E$ is a compact vector bundle on $\mathcal{X}$, $M$ is in $D_{\operatorname{qc}}(\mathcal{X})$ and $s$ is an integer, then one has the following isomorphism:
    \begin{displaymath}
        \tau^{\geq s}\mathbb{R}\operatorname{Hom}_{\mathcal{O}_{\mathcal{X}}}(E,M) \cong \tau^{\geq s}\mathbb{R}\operatorname{Hom}_{\mathcal{O}_{\mathcal{X}}}(E,\tau^{\geq s-d}M).
    \end{displaymath}
\end{theorem}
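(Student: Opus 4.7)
The ``moreover'' part is a formal consequence of the uniform bound. Granting $\operatorname{pd}(E)\leq d$, the functor $\mathbb{R}\operatorname{Hom}_{\mathcal{O}_{\mathcal{X}}}(E,-)$ raises cohomological degree by at most $d$ on $D_{\operatorname{qc}}$, so when applied to the distinguished triangle $\tau^{<s-d}M \to M \to \tau^{\geq s-d}M$, the complex $\mathbb{R}\operatorname{Hom}_{\mathcal{O}_{\mathcal{X}}}(E,\tau^{<s-d}M)$ lands in $D^{\leq s-1}$ and is therefore annihilated by $\tau^{\geq s}$. The desired isomorphism is then the $\tau^{\geq s}$-truncation of the remaining edge map.

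For the uniform bound, the plan is a Noetherian induction driven by \Cref{lem:bounded} and \Cref{lem:bound-strata}. First I would reduce to the case where $\mathcal{X}$ is Noetherian by standard limit methods: a quasi-compact and quasi-separated algebraic stack with the compact resolution property is a cofiltered limit of Noetherian stacks inheriting the compact resolution property, and both compactness of a vector bundle and its pd descend to a finite stage. Once Noetherian, I would induct on closed reduced substacks. Assuming a uniform pd bound on every proper closed substack of $\mathcal{X}$, choose a dense quasi-compact open $j\colon\mathcal{U}\hookrightarrow\mathcal{X}$ with finitely presented closed complement $i\colon\mathcal{Z}\hookrightarrow\mathcal{X}$, selected so that the pd of compact vector bundles is uniformly bounded on $\mathcal{U}$ and $\operatorname{cd}(j)$ is finite. \Cref{lem:bound-strata} then combines the bound on $\mathcal{U}$ with the inductive hypothesis on $\mathcal{Z}$ to yield a uniform bound on $\mathcal{X}$. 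Throughout, \Cref{lem:bounded} propagates pd bounds across short exact sequences of vector bundles arising from compact resolutions, using duals and tensor products as the operative moves; the kernels appearing in a compact resolution are generally not vector bundles, so one digs several syzygies deep to extract honest short exact sequences of vector bundles before invoking the $2$-out-of-$3$ property.

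The hard step, and the source of the authors' warning that the result is ``surprisingly difficult in positive and mixed characteristics'', is producing the base-case open $\mathcal{U}$ with the required pd uniformity and finite $\operatorname{cd}(j)$. In characteristic zero, the Totaro--Gross presentation $\mathcal{X}\simeq[U/\operatorname{GL}_N]$ together with semisimplicity of reductive-group cohomology produces a dense tame gerbe locus on which both bounds are immediate. In positive or mixed characteristic no such structure is available, and one instead has to work directly on the smooth atlas of the Totaro--Gross presentation, combining generic flatness on the closed complements appearing at each stage of the induction with the compact resolution property itself to bound simultaneously the pd of compact vector bundles on a suitable open and the cohomological dimension of its immersion. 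This combined control is where essentially all of the real difficulty is concentrated; the two preceding lemmas, once it is in hand, then mechanically assemble the global bound.
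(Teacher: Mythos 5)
The ``moreover'' part is fine, and your high-level instinct to combine \Cref{lem:bounded} with \Cref{lem:bound-strata} is on the right track, but your proposal has a genuine gap precisely where you flag the difficulty: you never actually produce the argument in positive or mixed characteristic. Saying one should ``work directly on the smooth atlas\ldots combining generic flatness\ldots with the compact resolution property itself'' is a placeholder, not a proof; there is no mechanism in your outline that converts a compact resolution on $\mathcal{X}$ into a uniform $\operatorname{pd}$ bound on a dense open in positive characteristic. Your Noetherian induction on closed substacks also does not clearly terminate in the form you state it: \Cref{lem:bound-strata} bounds $\operatorname{pd}$ on $\mathcal{X}$ in terms of $\operatorname{pd}(j^\ast E)$, $\operatorname{pd}(i^\ast E)$ and $\operatorname{cd}(j)$, but you still need a uniform bound on $\mathcal{U}$ and finite $\operatorname{cd}(j)$, and the inductive hypothesis on $\mathcal{Z}$ gives you neither; the whole weight rests on the choice of $\mathcal{U}$, which is the unaddressed step.

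The paper proceeds quite differently. It first observes $\operatorname{pd}(E)\leq\operatorname{cd}(\mathcal{X})$, disposing of the finite-cohomological-dimension and characteristic-zero cases (the latter via the observation that a stack with no closed points of positive characteristic is a $\mathbf{Q}$-stack). The real work is then: (i) using the Hall--Neeman--Rydh structure theorem to know that stabilizers at positive-characteristic points have torus reduced identity component, so by the theory of nicely fundamental stacks there is a quasi-finite, flat, affine, syntomic morphism $f\colon\mathcal{W}\to\mathcal{X}$ whose image $\mathcal{U}$ contains all positive-characteristic points, with the closed complement $\mathcal{Z}$ purely of characteristic $0$; and (ii) after reducing via \Cref{lem:bound-strata} to $\mathcal{U}=\mathcal{X}$, running quasi-finite flat d\'evissage \cite[Theorem D$'$]{Hall/Rydh:2018}. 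The crux there is the finite flat surjective case \ref{D2a}, which is handled by a syzygy trick: repeatedly take kernels $0\to E^{(i)}\to y_\ast y^\times E^{(i-1)}\to E^{(i-1)}\to 0$ until the $\operatorname{Ext}$-degree shift forces $\operatorname{pd}(E^{(e)})\leq r'$, then push the bound back down through \Cref{lem:bounded}\eqref{LI:bounded:2of3}. None of these ingredients --- the stabilizer structure theorem, the nicely fundamental cover, the d\'evissage theorem, or the $y_\ast y^\times$ syzygy argument --- appears in your proposal, and they are exactly what fills the gap you acknowledge.
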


\begin{proof}
    There is an identification of $\mathbb{R}\operatorname{Hom}_{\mathcal{O}_{\mathcal{X}}}(E,M)$ with $\mathbb{R}\Gamma(\mathcal{X},E^\vee \otimes^{\mathbb{L}}_{\mathcal{O}_{\mathcal{X}}} M)$, and so, we have that $\operatorname{pd}(E) \leq \operatorname{pd}(\mathcal{O}_{\mathcal{X}}) = \operatorname{cd}(\mathcal{X})$. In particular, if $\mathcal{X}$ has finite cohomological dimension, then the result is trivial.
    
    Next, if $\mathcal{X}$ has no closed points of positive characteristic, then we claim that $\mathcal{X}$ is a $\mathbf{Q}$-stack, and so, $\mathcal{X}$ has finite cohomological dimension by \cite[Theorem C]{Hall/Rydh:2015}. For each integer $n$, let $\mathcal{X}_{n}$ denote algebraic stack associated to $\mathcal{X} \times_{\operatorname{Spec}( \mathbf{Z})} \operatorname{Spec} (\mathbf{Z}[n^{-1}] )\hookrightarrow \mathcal{X}$. Since every closed point of $\mathcal{X}$ has characteristic zero, it follows that every closed point of $\mathcal{X}$ factors through $\mathcal{X}_{n} \hookrightarrow \mathcal{X}$, and so, this map is consequently an isomorphism. However, $\mathcal{X}_{\mathbf{Q}} = \varprojlim_n \mathcal{X}_n $ is isomorphic to $\mathcal{X}$, ensuring that $\mathcal{X} \to \operatorname{Spec}(\mathbf{Z})$ factors through $\operatorname{Spec}(\mathbf{Q}) \to \operatorname{Spec} (\mathbf{Z})$; that is, $\mathcal{X}$ is a $\mathbf{Q}$-stack as desired.

    In general, as $\mathcal{X}$ has the compact resolution property, it follows that $D_{\operatorname{qc}}(\mathcal{X})$ is compactly generated \cite[Theorem 8.4]{Hall/Rydh:2017}. If $x \colon \operatorname{Spec} (k) \to \mathcal{X}$ is a positive characteristic geometric point with (affine) automorphism group $G_x$ over $k$, then $(G_x)^0_{\mathrm{red}}$ is a torus \cite[Theorem 1.1]{Hall/Neeman/Rydh:2019}. Arguing as in the proof of \cite[Theorem 1.12(5)]{Alper/Hall/Halpern-Leistner/Rydh:2024}, there is a quasi-finite flat, affine, syntomic morphism $f\colon \mathcal{W} \to \mathcal{X}$, where $\mathcal{W}$ is nicely fundamental and $j \colon \mathcal{U}=f(\mathcal{W}) \subseteq \mathcal{X}$ is a quasi-compact open that contains all of the points of positive characteristic. Let $i \colon \mathcal{Z} \hookrightarrow \mathcal{X}$ be a finitely presented closed immersion with $|\mathcal{X}|\setminus |i(\mathcal{Z})| = |\mathcal{U}|$. Note that every closed point of $\mathcal{Z}$ has characteristic $0$. Hence, there exists an $r_{\mathcal{Z}}$ such that if $F$ is a compact vector bundle on $\mathcal{Z}$, then $\operatorname{pd}(F) \leq r_{\mathcal{Z}}$. By \Cref{lem:bound-strata}, it suffices to treat the case where $\mathcal{U}=\mathcal{X}$; that is, there is a quasi-finite faithfully flat, affine, syntomic morphism $f\colon \mathcal{W} \to \mathcal{X}$, where $\mathcal{W}$ is nicely fundamental.

    Let $\mathbf{E}$ be the category of stacks $\mathcal{V}$ that are quasi-finite, flat, of finite presentation, representable, and separated over $\mathcal{X}$. Let $\mathbf{F}$ be the subcategory of $\mathbf{E}$ consisting of those objects $\mathcal{V}$ such for all \'{e}tale morphisms $\mathcal{V}^\prime \to \mathcal{V}$ in $\mathbf{E}$ there exists an integer $r^\prime$ such that $\operatorname{pd}(F^\prime) \leq r^\prime$ for all compact vector bundles $F'$ on $\mathcal{V}^\prime$. Since $\mathcal{W}$ has cohomological dimension $0$, for every morphism $\mathcal{W}^\prime \to \mathcal{W}$ in $\mathbf{E}$, $\mathcal{W}^\prime$ has finite cohomological dimension, and so, $\mathcal{W}$ is in $\mathbf{F}$. It remains to prove that $\mathcal{X}$ is in $\mathbf{F}$, which we will accomplish using \cite[Theorem D']{Hall/Rydh:2018}. Thus, we need to verify the following: 
    \begin{enumerate}[label=($\text{D}$\arabic*)]
        \item\label{D1a} if $(\mathcal{V}^\prime\to\mathcal{V})$ is an \'{e}tale morphism in $\mathbf{E}$ and $\mathcal{V}$ is in $\mathbf{F}$, then $\mathcal{V}^\prime$ is in $\mathbf{F}$.   
        \item\label{D2a} If $(\mathcal{Y}^\prime\xrightarrow{y} \mathcal{Y})$ is a morphism in $\mathbf{E}$ which is finite, flat and surjective, and $\mathcal{Y}^\prime $ is in $\mathbf{F}$, then $\mathcal{Y}$ belongs to $\mathbf{F}$.
        \item\label{D3a} If $(\mathcal{S}\xrightarrow{i} \mathcal{V})$, $( \mathcal{V}^\prime\xrightarrow{v}\mathcal{V})$ are morphisms in $\mathbf{E}$ where $i$ is an open immersion and $v$ is \'{e}tale and an isomorphism on $\mathcal{V}\setminus \mathcal{S}$, then $\mathcal{S}$ and $\mathcal{V}^\prime$ being objects of $\mathbf{F}$ ensures that $\mathcal{V}$ is as well.
    \end{enumerate}
    Clearly, \ref{D1a} is trivial and \ref{D3a} follows easily from the Mayer-Vietoris triangle (e.g., \cite[Lemma 5.9(1)]{Hall/Rydh:2017}). For \ref{D2a}: if $E$ is a compact vector bundle on $\mathcal{Y}$, set $E^{(0)} = E$ and inductively define $E^{(i)}$ via the short exact sequence of compact vector bundles $0 \to E^{(i)} \to y_\ast y^\times E^{(i-1)} \to E^{(i-1)} \to 0$ on $\mathcal{Y}$. By assumption, there is an $r^\prime$ such that $r^\prime \geq \operatorname{cd}(F^\prime)$ for all compact vector bundles on $\mathcal{Y}^\prime$. Then if $M$ is a quasi-coherent $\mathcal{O}_{\mathcal{Y}}$-module, it follows that for every $r^{\prime \prime} > r^\prime$: 
    \begin{displaymath}
        \operatorname{Ext}^{r^{\prime \prime}}_{\mathcal{O}_{\mathcal{Y}}}(E^{(i)},M) \cong \operatorname{Ext}^{r^{\prime \prime}+1}_{\mathcal{O}_{\mathcal{Y}}}(E^{(i-1)},M). 
    \end{displaymath}
    Working inductively, we see that if $r^{\prime \prime} > r^\prime$: 
    \begin{displaymath}
        \operatorname{Ext}^{r^{\prime \prime}}_{\mathcal{O}_{\mathcal{Y}}}(E^{(i)},M) \cong \operatorname{Ext}^{r^{\prime \prime}+i}_{\mathcal{O}_{\mathcal{Y}}}(E,M). 
    \end{displaymath}
    Since $E$ is compact, there exists $e$ such that $\operatorname{Ext}^{e^\prime}_{\mathcal{O}_{\mathcal{Y}}}(E,M) =0$, whenever $e^\prime >  e$. It follows that $\operatorname{Ext}^{r^{\prime \prime}}_{\mathcal{O}_{\mathcal{Y}}}(E^{(i)},M) =0$, whenever $i+r^{\prime \prime} > e$. In particular, $\operatorname{Ext}^{r^{\prime \prime}}_{\mathcal{O}_{\mathcal{Y}}}(E^{(e)},M) =0$ for all $r^{\prime \prime}> r^\prime$. That is, $\operatorname{pd}(E^{(e)}) \leq r^\prime$. Since $\operatorname{pd}(y_\ast y^\times E^{(i-1)}) \leq r^\prime$ it follows from \Cref{lem:bounded}\eqref{LI:bounded:2of3} that $\operatorname{pd}(E^{(e-1)}) \leq r^\prime$. Working inductively, we conclude that $\operatorname{pd}(E) \leq r^\prime$, as claimed.    

    For the latter claim, we note that compact generation of $D_{\operatorname{qc}}(\mathcal{X})$ implies that there is an equivalence of $D(\operatorname{Qcoh}(\mathcal{X}))$ with $D_{\operatorname{qc}}(\mathcal{X})$. Now apply \cite[\href{https://stacks.math.columbia.edu/tag/08HP}{Tag 07K7}]{StacksProject} to $\operatorname{Hom}_{\mathcal{O}_{\mathcal{X}}}(E,-) \colon \operatorname{Qcoh}(\mathcal{X}) \to \operatorname{Ab}$, which completes the proof.
\end{proof}

\begin{remark}
    The proof of \ref{D2} above shows that if $\mathcal{G}$ is a gerbe for a finite flat group scheme over an affine scheme, then every compact vector bundle on $\mathcal{G}$ is projective. A simple argument then shows that if $\mathcal{G} = BG$, where $G$ is a finite flat group scheme over a ring $A$, then every compact vector bundle is a direct summand of a direct sum of copies of the regular representation.
\end{remark}
We now come to the main result of this section.
\begin{proposition}\label{prop:approximation_resolution_property}
    If $\mathcal{X}$ is a quasi-compact and quasi-separated algebraic stack with the compact resolution property, then approximation by compact complexes holds for $\mathcal{X}$.
\end{proposition}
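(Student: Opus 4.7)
The plan is to adapt the classical resolution-by-compact-vector-bundles argument from the scheme/algebraic-space case (see \cite[Tag 08HP]{StacksProject}), using the uniform bound from \Cref{thm:uniform-bound-ext-dim} as a substitute for the finite cohomological dimension available there. Fix a closed subset $T \subset |\mathcal{X}|$ with quasi-compact complement. Since the compact resolution property implies that $D_{\operatorname{qc}}(\mathcal{X})$ is compactly generated \cite[Theorem 8.4]{Hall/Rydh:2017}, $\mathcal{X}$ satisfies the Thomason condition, so there is a compact complex $K$ with cohomological support exactly $T$. Let $d$ be the uniform projective-dimension bound for compact vector bundles provided by \Cref{thm:uniform-bound-ext-dim}, and choose $r$ to be $d$ plus the amplitude of $K$ plus a small safety constant.

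Given an $(m-r)$-pseudocoherent $E \in D_{\operatorname{qc}}(\mathcal{X})$ with cohomology supported on $T$ in degrees $\geq m-r$, I would first construct, ignoring support, a compact complex $P_0$ built from compact vector bundles together with an $m$-isomorphism $P_0 \to E$. This proceeds inductively, working downward from a sufficiently high degree: at each step surject onto the top remaining cohomology via a compact vector bundle (using the compact resolution property), form the cone, and continue. The truncation identity
\[
  \tau^{\geq s}\mathbb{R}\operatorname{Hom}_{\mathcal{O}_{\mathcal{X}}}(V, E) \;\cong\; \tau^{\geq s}\mathbb{R}\operatorname{Hom}_{\mathcal{O}_{\mathcal{X}}}(V, \tau^{\geq s-d} E)
\]
from \Cref{thm:uniform-bound-ext-dim} guarantees that the maps can be lifted compatibly with truncations, while the uniform bound $d$ ensures that the construction terminates after finitely many steps with a genuinely compact $P_0$ rather than only a homotopy colimit.

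To enforce the support condition, I would modify $P_0$ using $K$, either by tensoring with a Koszul-style perfect complex constructed from $K$ or by factoring $P_0 \to E$ through the local-cohomology triangle associated to $K$, producing a compact $P$ whose cohomological support lies in $T$. The hypothesis that the cohomology of $E$ in degrees $\geq m-r$ is already supported on $T$ is crucial here: it ensures that replacing $P_0$ by its $K$-supported modification does not alter the relevant part of $P_0 \to E$, provided the choice of $r$ was generous enough to absorb the amplitude of $K$. The main obstacle is the bookkeeping in this final step—precisely coordinating the amplitude of $K$, the uniform bound $d$, and the approximation degree $m$—and it is exactly where the full strength of \Cref{thm:uniform-bound-ext-dim} together with the Thomason condition is indispensable; a slightly wrong choice of $r$ would either fail to yield compactness or destroy the $m$-isomorphism.
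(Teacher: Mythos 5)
Your high-level toolkit matches the paper's — Theorem~\ref{thm:uniform-bound-ext-dim} for the uniform bound $d$, the truncation identity, compact generation/Thomason, and some Koszul-style or local-cohomology device to force support in $T$. But the architecture of the argument is genuinely different, and two of your choices create gaps.

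First, your shift constant $r$ is wrong. You set $r = d + (\text{amplitude of a compact }K\text{ with support }T) + c$. The paper instead sets $r = k + d$, where $k$ is the cohomological dimension of the open immersion $j \colon \mathcal{U} = |\mathcal{X}|\setminus T \to \mathcal{X}$. The quantity $k$ is what enters the vanishing step: one needs $\tau^{\geq m-d}\mathbb{R}j_\ast \mathbb{L}j^\ast E \cong \tau^{\geq m-d}\mathbb{R}j_\ast \tau^{\geq m-r}\mathbb{L}j^\ast E$, which follows because $\mathbb{R}j_\ast$ of the $\tau^{\leq m-r-1}$-piece lands in degrees $\leq m-r-1+k = m-d-1$ and is killed by $\tau^{\geq m-d}$. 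The amplitude of a compact complex supported on $T$ is unrelated to $\operatorname{cd}(j)$; there is no a priori inequality between the two, so your $r$ does not make the key vanishing go through.

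Second, your build-then-modify strategy (construct an $m$-approximation $P_0 \to E$ ignoring support, then force support at the end) has a gap that you do not close. To factor $P_0 \to E$ through something supported on $T$, you would need the restriction $\mathbb{L}j^\ast P_0 \to \mathbb{L}j^\ast E$ to vanish, or at least to control it, but $P_0$ is built by iterated surjections going down several cohomological degrees, so its restriction to $\mathcal{U}$ is a perfect complex of some amplitude that overlaps the support range of $\mathbb{L}j^\ast E$, and there is no automatic vanishing. The paper avoids this entirely by enforcing support at \emph{every} inductive step: at each stage there is a single compact vector bundle $F$ in a single degree $t \geq m$, one shows $\operatorname{Hom}_{\mathcal{O}_\mathcal{X}}(F[-t],\mathbb{R}j_\ast\mathbb{L}j^\ast E)=0$ via the truncation identity, and then Proposition~\ref{prop:nonnoetherian-deligne-formula} factors $F[-t]\to E$ through $F[-t]\otimes^{\mathbb{L}}\mathcal{Q}_n$ with $\mathcal{Q}_n$ a perfect complex supported on $T$ and $H^0$-isomorphic to the truncated sheaf of ideals. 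This is exactly the structure of the Stacks Project argument you cite (Tag 08HP): the Koszul-type complex is brought in \emph{at each step}, not as a post-hoc correction. Your proposal gestures at the right lemma (Proposition~\ref{prop:nonnoetherian-deligne-formula}) but applies it to the wrong object at the wrong time, and the claim that the support hypothesis on $E$ ``ensures that replacing $P_0$ by its $K$-supported modification does not alter the relevant part'' is precisely the step that needs (and, with your strategy, lacks) a real argument.
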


\begin{proof}
    Choose a $d$ as in \Cref{thm:uniform-bound-ext-dim}. Let $T$ be a closed subset of $|\mathcal{X}|$ with quasi-compact open complement $\mathcal{U}$. Note that the immersion $j\colon\mathcal{U}\to \mathcal{X}$ has cohomological dimension $\leq k$ for some $k$. Set $r=k+d$. We claim that approximation holds for all $(T, E, m)$ where $E$ is $(m-r)$-pseudocoherent and $H^{i}(E)$ is supported on $T$ for all $i\geq m-r$. 

    Let $t$ be the largest integer such that $H^{t}(E)$ is nonzero. We prove the claim by induction on $t$. If $t< m$, then $0 \to E$ is an approximation and we are done. If $t\geq m$, then $H^{t}(E)$ is a quasi-coherent sheaf on $\mathcal{X}$ of finite type. Since $\mathcal{X}$ has the compact resolution property, $D_{\operatorname{qc}}(\mathcal{X})$ is compactly generated \cite[Proposition 8.4]{Hall/Rydh:2017}, and so, $D(\operatorname{Qcoh}(\mathcal{X}))$ is triangle equivalent to $D_{\operatorname{qc}}(\mathcal{X})$ \cite[Theorem 1.2]{Hall/Neeman/Rydh:2019}. Hence, $E$ is quasi-isomorphic to a complex $(\cdots \xrightarrow{\partial^{s-1}} E^s \xrightarrow{\partial^s} E^{s+1} \xrightarrow{\partial^{s+1}}\cdots)$ of quasi-coherent $\mathcal{O}_{\mathcal{X}}$-modules, where $E^s = 0$ for $s>t$. Consider now the surjection $\ker(\partial^t) \twoheadrightarrow H^t(E)$; then the compact resolution property implies that there is a vector bundle $F$ and a morphism $F \to \ker(\partial^t)$ such that the induced morphism $F \to H^t(E)$ is surjective. It follows that there is an induced map $\psi\colon F[-t] \to E$ in $D_{\operatorname{qc}}(\mathcal{X})$ such that $H^t(\psi)$ is surjective. By \Cref{thm:uniform-bound-ext-dim}, we have since $t\geq m$ that
    \begin{displaymath}
        \begin{aligned}
            \operatorname{Hom}_{\mathcal{O}_{\mathcal{X}}}(F[-t],\mathbb{R} j_\ast \mathbb{L} j^\ast E) &\cong H^t(\mathbb{R}\operatorname{Hom}_{\mathcal{O}_{\mathcal{X}}}(F,\mathbb{R} j_\ast \mathbb{L} j^\ast E)) \\
            &\cong H^t(\tau^{\geq m}\mathbb{R}\operatorname{Hom}_{\mathcal{O}_{\mathcal{X}}}(F,\mathbb{R} j_\ast \mathbb{L} j^\ast E)) \\
            &\cong H^t(\tau^{\geq m}\mathbb{R}\operatorname{Hom}_{\mathcal{O}_{\mathcal{X}}}(F,\tau^{\geq m-d}\mathbb{R} j_\ast \mathbb{L} j^\ast E)) \\
            &\cong H^t(\tau^{\geq m}\mathbb{R}\operatorname{Hom}_{\mathcal{O}_{\mathcal{X}}}(F,\tau^{\geq m-d}\mathbb{R} j_\ast \tau^{\geq m-r}\mathbb{L} j^\ast E)) \\
            &= 0.
            \end{aligned}
    \end{displaymath}
    Hence, $\mathbb{L} j^\ast (\psi)$ vanishes, and so, \Cref{prop:nonnoetherian-deligne-formula} implies that there is a factorization of $\psi$ through a morphism $\phi \colon Q=F[-t] \otimes^{\mathbb{L}}_{\mathcal{O}_{\mathcal{X}}}\mathcal{Q}_n \to E$ and $H^t(\phi)$ is surjective. Since $F$ is compact and $\mathbb{L}j^\ast \mathcal{Q}_n$ is the zero object, it follows that $Q$ belongs to $D_{\operatorname{qc}}(\mathcal{X})^c \cap D_{\operatorname{qc},T}(\mathcal{X})$. Now form the distinguished triangle
    \begin{displaymath}
        Q \xrightarrow{\phi} E \to E^\prime \to Q[1].
    \end{displaymath}
    Then $H^i(E^\prime) = 0$ for all $i\geq t$, $E^\prime$ is $(m-r)$-pseudocoherent and $H^i(E^\prime)$ is supported on $T$ for all $i\geq m-r$. By induction, there is an object $P^\prime$ of $D_{\operatorname{qc}}(\mathcal{X})^c \cap D_{\operatorname{qc},T}(\mathcal{X}) $ and an $m$-isomorphism $\pi^\prime \colon P^\prime \to E^\prime$. Putting this all together, we obtain a diagram of distinguished triangles:
    \begin{displaymath}
        \xymatrix{Q \ar@{=}[d] \ar[r] & P \ar[d]_{\pi} \ar[r] & P^\prime \ar[d]^{\pi^\prime} \ar[r] & Q[1] \ar@{=}[d] \\ Q \ar[r]_{\psi} & E \ar[r] & E^\prime \ar[r] & Q[1].}
    \end{displaymath}
    Then $\pi \colon P\to E$ is the desired approximation for $(T,E,m)$.
\end{proof}

\section{Descent of approximation by compact complexes}
\label{sec:approximation_by_perfect}

In this section, we show that approximation of pseudocoherent complexes by compact complexes descends along quasi-finite flat covers. The key ingredient is quasi-finite flat d\'{e}vissage for algebraic stacks \cite[Theorem\ $\text{D}^\prime$]{Hall/Rydh:2018}. We first show that approximation descends along finite flat coverings. 

\begin{proposition}\label{prop:finite_flat_decent_approximation}
    Let $\mathcal{X}$ be a quasi-compact and quasi-separated algebraic stack. Let $f\colon\mathcal{Y}\to \mathcal{X}$ be a morphism that is finite, faithfully flat, and of finite presentation. If approximation by compact complexes holds for $\mathcal{Y}$, then it holds for $\mathcal{X}$. 
\end{proposition}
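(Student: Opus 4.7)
The plan is to transfer a triple from $\mathcal{X}$ to $\mathcal{Y}$ via $\mathbb{L}f^{\ast}$, approximate on $\mathcal{Y}$ using the standing hypothesis, push the resulting compact forward by $\mathbb{R}f_{\ast}$, and then descend to $E$ via a finite d\'evissage coming from the descendability of $f$. The pull--push formalism works cleanly because $f$ is finite (so $\mathbb{R}f_{\ast} = f_{\ast}$ is $t$-exact, and $f$ is affine and concentrated) and flat of finite presentation (so $\mathbb{L}f^{\ast} = f^{\ast}$ is $t$-exact and preserves pseudocoherence).

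Let $(T, E, m)$ be a triple as in \Cref{def:approximation}, with $E$ being $(m-r)$-pseudocoherent and having cohomology in degrees $\geq m-r$ supported on $T$; the integer $r$ will be chosen of the form $r_{\mathcal{Y}} + Nc$ below, with $r_{\mathcal{Y}}$ supplied by the hypothesis on $\mathcal{Y}$ for the closed subset $f^{-1}(T) \subseteq |\mathcal{Y}|$. The first step is to pass to $\mathcal{Y}$: $f^{\ast}E$ has the analogous pseudocoherence and support properties with respect to $f^{-1}(T)$. The hypothesis then produces a compact $P_{\mathcal{Y}} \in D_{\operatorname{qc}}(\mathcal{Y})^{c}$ supported on $f^{-1}(T)$ with an $m$-isomorphism $\alpha \colon P_{\mathcal{Y}} \to f^{\ast}E$. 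Applying $\mathbb{R}f_{\ast}$, which preserves both $m$-isomorphisms and compactness (the latter because $f$ is concentrated, see \cite[Lemma~4.5]{Hall/Rydh:2017}), produces an $m$-isomorphism $f_{\ast}P_{\mathcal{Y}} \to f_{\ast}f^{\ast}E \simeq E \otimes^{\mathbb{L}}_{\mathcal{O}_{\mathcal{X}}} f_{\ast}\mathcal{O}_{\mathcal{Y}}$ (projection formula), with $f_{\ast}P_{\mathcal{Y}}$ compact on $\mathcal{X}$ and supported on $T$.

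The remaining task is to convert this into an $m$-isomorphism from a compact to $E$ itself. For this I would invoke that $f$ is descendable by \cite[Theorem~7.1]{Hall:2022}. Letting $I := \operatorname{fib}(\mathcal{O}_{\mathcal{X}} \to f_{\ast}\mathcal{O}_{\mathcal{Y}})$, descendability supplies an integer $N$ with $I^{\otimes N} \simeq 0$ in $D_{\operatorname{qc}}(\mathcal{X})$. Tensoring the defining triangle of $I$ with $E \otimes^{\mathbb{L}} I^{\otimes k}$ and applying the projection formula yields, for each $k \geq 0$, a distinguished triangle
\begin{equation*}
E \otimes^{\mathbb{L}} I^{\otimes (k+1)} \to E \otimes^{\mathbb{L}} I^{\otimes k} \to f_{\ast}\bigl(f^{\ast}E \otimes^{\mathbb{L}} f^{\ast}I^{\otimes k}\bigr).
\end{equation*}
Since the leftmost term vanishes for $k = N-1$, a downward induction on $k \in \{N-1, N-2, \dots, 0\}$ constructs a compact $m$-approximation of $E \otimes^{\mathbb{L}} I^{\otimes k}$ at each stage; the case $k=0$ gives the desired approximation of $E$. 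At each stage the right-hand term $f_{\ast}(f^{\ast}E \otimes^{\mathbb{L}} f^{\ast}I^{\otimes k})$ is approximated by the pull--approximate--push procedure above, applied to $f^{\ast}E \otimes^{\mathbb{L}} f^{\ast}I^{\otimes k}$ (which is pseudocoherent, since $I$ is a bounded complex of coherent sheaves), and the two inputs are assembled through the triangle using \Cref{lem:lift-compact} to lift the connecting map compatibly to compacts.

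The main obstacle will be the bookkeeping of the pseudocoherence bound. Each use of the triangle, and each lift supplied by \Cref{lem:lift-compact}, loses a bounded amount of pseudocoherence determined by the amplitude of $I^{\otimes k}$ (at most $k$, since $I$ is concentrated in degrees $[-1,0]$) and the cohomological dimension of $f$. Iterating over $N$ steps shows that $r$ on $\mathcal{X}$ may be taken of the form $r_{\mathcal{Y}} + Nc$ for a constant $c$ depending only on $f$. A direct argument via a trace map $f_{\ast}\mathcal{O}_{\mathcal{Y}} \to \mathcal{O}_{\mathcal{X}}$ splitting the unit would bypass this d\'evissage, but such a splitting fails in any characteristic dividing the generic rank of $f$, which is precisely why descendability is the right tool here.
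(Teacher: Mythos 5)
There is a genuine gap, and it is in the step you identify as the heart of your argument. Descendability of $f$ does \emph{not} give an $N$ with $I^{\otimes N} \simeq 0$; it gives an $N$ with $\delta^{\otimes N} = 0$, where $\delta \colon I \to \mathcal{O}_{\mathcal{X}}$ is the \emph{map} in the defining triangle (see \Cref{def:descendable_object}). The object $I^{\otimes N}$ essentially never vanishes in this situation. Indeed, since $f$ is finite, faithfully flat and of finite presentation, $f_{\ast}\mathcal{O}_{\mathcal{Y}}$ is a vector bundle and the unit $\mathcal{O}_{\mathcal{X}} \to f_{\ast}\mathcal{O}_{\mathcal{Y}}$ is a \emph{locally split} injection of vector bundles (after restricting to a residue field the image of $1$ is nonzero, so Nakayama applies); hence $I \simeq \mathcal{C}[-1]$ for a vector bundle $\mathcal{C}$ of rank $(\operatorname{rk} f)-1$, and $I^{\otimes k} \simeq \mathcal{C}^{\otimes k}[-k]$, which is nonzero whenever $\operatorname{rk} f > 1$. (Relatedly, $I$ sits in cohomological degrees $[0,1]$, not $[-1,0]$.) Your downward induction therefore has no base case, and the extra refinement needed to run a d\'evissage from the genuine conclusion $\mathcal{O}_{\mathcal{X}} \in \langle f_{\ast}\mathcal{O}_{\mathcal{Y}}\rangle_N^{\otimes}$ is not routine: the connecting maps in the resulting Postnikov-type filtration are uncontrolled, and you cannot appeal to \Cref{lem:lift-compact} on $\mathcal{X}$ to lift them to compacts, since that lemma already \emph{assumes} approximation by compacts on $\mathcal{X}$ --- precisely what you are trying to prove. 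This makes the assembly step circular.

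The cleaner route --- and the one the paper takes --- is to run the argument through the right adjoint $f^{\times}$ rather than $f^{\ast}$. For finite flat $f$ of finite presentation one has $f^{\times}(-) \cong f^{\times}\mathcal{O}_{\mathcal{X}} \otimes^{\mathbb{L}} \mathbb{L}f^{\ast}(-)$ with $f^{\times}\mathcal{O}_{\mathcal{X}}$ a vector bundle, so $f^{\times}$ is $t$-exact, conservative, and preserves pseudocoherence; applying the approximation hypothesis on $\mathcal{Y}$ to $f^{\times}E$ and then pushing forward produces, via the counit of $(\mathbb{R}f_{\ast}, f^{\times})$, a map $\mathbb{R}f_{\ast}P \to E$ landing directly on $E$ with no descendability needed. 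The crucial point is that this counit is \emph{surjective on cohomology sheaves}: $f_{\ast}f^{\times}\mathcal{O}_{\mathcal{X}} = \mathcal{H}\!\mathit{om}(f_{\ast}\mathcal{O}_{\mathcal{Y}},\mathcal{O}_{\mathcal{X}}) \to \mathcal{O}_{\mathcal{X}}$ is the dual of the locally split injection above, hence a split surjection, and this persists after tensoring with $E$ and applying the projection formula. The cone of $\mathbb{R}f_{\ast}P \to E$ therefore has strictly smaller top cohomological degree, and one concludes by induction on that degree, with the two compact approximations glued by the TR3 trick (take $K := \operatorname{fib}(Q \to \mathbb{R}f_{\ast}P[1])$), avoiding any need to lift maps to compacts on $\mathcal{X}$. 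Your closing observation that a genuine trace splitting $f_{\ast}\mathcal{O}_{\mathcal{Y}} \to \mathcal{O}_{\mathcal{X}}$ would shortcut matters is correct, and the $f^{\times}$-counit argument is exactly the substitute that works without assuming such a splitting of the unit.
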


\begin{proof}
    Let $T$ be a closed subset of $|\mathcal{X}|$ with quasi-compact complement. Then $f^{-1}(T)$ is also closed in $|\mathcal{Y}|$ with quasi-compact complement. By assumption, there exists an integer $r$ such that approximation holds for all triples $(f^{-1}(T), G, m)$ satisfying the conditions in \Cref{def:approximation}. We claim the same integer $r$ works for $(T, E, m)$ on $\mathcal{Y}$. Let $t$ be the largest integer such that $H^{t}(E)\neq 0$. We prove the claim by induction on $t$. If $t< m-r$, then $\tau^{\geq m-r}E$ is the zero object, and so, $0 \to E$ is an $m$-isomorphism, which completes the base case of the induction. Now suppose $t\geq m-r$. We first note that $\mathbb{R}f_{\ast}$ admits a conservative t-exact right adjoint $f^{\times}(-) \cong f^\times(\mathcal{O}_{\mathcal{X}}) \otimes^{\mathbb{L}}_{\mathcal{O}_{\mathcal{Y}}} \mathbb{L}f^\ast(-)$, where $f^\times(\mathcal{O}_{\mathcal{X}})$ is a vector bundle of finite rank \cite[Theorem 4.14 \& Corollary 4.15]{Hall/Rydh:2017}. It follows that $f^{\times}E$ is a $(m-r)$-pseudocoherent complex and $H^{i}(f^{\times}E)$ is supported on $f^{-1}(T)$ for $i\geq m-r$. By assumption, there exists a compact complex $P$ supported on $f^{-1}(T)$ and an $m$-isomorphism map $P\to f^{\times}E$. By adjunction, we obtain a distinguished triangle: 
    \begin{displaymath}
        \mathbb{R}f_{\ast}P\to E\to E^\prime\to \mathbb{R}f_{\ast}P[1].
    \end{displaymath}
    Then we claim that $E^\prime$ is $(m-r)$-pseudocoherent, $H^{i}(E^\prime)$ is supported on $T$ if $i\geq m-r$, and $H^{t}(E^\prime)=0$ if $i\geq t$. Indeed, $\mathbb{R}f_\ast P$ is compact and so pseudocoherent and $E$ is $(m-r)$-pseudocoherent, so $E^\prime$ is $(m-r)$-pseudocoherent. The statement on support of the cohomology sheaves is similar. Finally, to see that $H^t(E^\prime) = 0$, we note that there is an exact sequence of cohomology:
    \begin{displaymath}
        H^t(\mathbb{R}f_\ast P) \to H^t(E) \to H^t(E^\prime) \to H^{t+1}(\mathbb{R}f_\ast P).
    \end{displaymath}
    Now $f$ is finite and affine, so $H^i(\mathbb{R}f_\ast P) $ is isomorphic to $f_\ast H^i(P)$ for all integers $i$. Since $f$ is flat, $H^{t+1}(\mathbb{L} f^\ast E) = f^\ast H^{t+1}(E) = 0$ and $P \to \mathbb{L} f^\ast E$ is an $m$-isomorphism, it follows that $H^{t+1}(P) =0$. Hence, it remains to prove that $H^t(\mathbb{R}f_\ast P) \to H^t(E)$ is surjective, which is immediate from the above and the surjectivity of $H^i(\mathbb{R} f_\ast f^\times E) \to H^i(E)$. 
    It now follows that by induction, there exists an approximation $Q\to E^\prime$ for $(E^\prime, T,m)$. Consider the following distinguished triangle$$\mathbb{R}f_{\ast}P\to K\to Q\to \mathbb{R}f_{\ast}P[1],$$where the last map is the composition $Q\to E^\prime\to \mathbb{R}f_{\ast}P[1]$. Since $\mathbb{R}f_{\ast}P$ and $Q$ are compact and supported on $T$, so is $K$. By TR3, we obtain the following diagram of distinguished triangles
    \begin{equation}
        \begin{tikzcd}
            \mathbb{R}f_{\ast}P\arrow[r]\arrow[d,equal] & K\arrow[r]\arrow[d] & Q\arrow[r]\arrow[d] & \mathbb{R}f_{\ast}P[1]\arrow[d,equal]\\
        \mathbb{R}f_{\ast}P\arrow[r] & E\arrow[r] & E^\prime\arrow[r] & \mathbb{R}f_{\ast}P[1].
        \end{tikzcd}
    \end{equation}
    The map $K\to E$ is the desired approximation for $(E,T,m)$.
\end{proof}

We recall some notions that we will use next. Consider a cartesian diagram of quasi-compact and quasi-separated algebraic stacks:
\begin{equation}
    \xymatrix{\mathcal{U}^\prime \ar[r]^{j^\prime}\ar[d]_{f_{\mathcal{U}}} & \mathcal{X^\prime} \ar[d]^{f} \\ \mathcal{U} \ar[r]_j & \mathcal{X},}\label{eq:MV}    
\end{equation}
where $j$ is an open immersion. Now choose a finitely presented closed immersion  $i\colon \mathcal{Z} \hookrightarrow \mathcal{X}$ with $|\mathcal{X}|\setminus |i(\mathcal{Z})| = |\mathcal{U}|$. If $f$ is flat, concentrated and $f_{\mathcal{Z}} \colon \mathcal{Z}^\prime =\mathcal{X}^\prime \times_{\mathcal{X}} \mathcal{Z} \to \mathcal{Z}$ is an isomorphism, then we say that the square above is a \textbf{flat Mayer-Vietoris  square} \cite{Hall/Rydh:2023}. The key property here is that $f$ induces an adjoint t-exact equivalence \cite[Theorem 4.2]{Hall/Rydh:2023}:
\begin{equation}
    D_{\operatorname{qc},\mathcal{Z}}(\mathcal{X}) \cong D_{\operatorname{qc},\mathcal{Z}^\prime}(\mathcal{X}^\prime). \label{eq:fmv-equiv}
\end{equation}
Moreover, if $P$ is in $D_{\operatorname{qc},\mathcal{Z}^\prime}(\mathcal{X}^\prime) \cap D_{\operatorname{qc}}(\mathcal{X}^\prime)^c$, then $\mathbb{R}f_\ast P$ is in $D_{\operatorname{qc},\mathcal{Z}}(\mathcal{X}) \cap D_{\operatorname{qc}}(\mathcal{X})^c$ \cite[Lemma 5.9]{Hall/Rydh:2017}. If $f$ is \'{e}tale, then the square \eqref{eq:MV} is an \textbf{\'{e}tale neighbourhood} (cf.~\cite{Rydh:2011}).

\begin{proposition}\label{prop:descending_approximation_along_etale_nbhd}
    Consider a flat Mayer--Vietoris square as in \eqref{eq:MV}. Assume that $\mathcal{X}^\prime$ and $\mathcal{U}$ satisfy the Thomason condition. If approximation by compact complexes holds for $\mathcal{U}$ and $\mathcal{X}^\prime$, then it holds for $\mathcal{X}$. 
\end{proposition}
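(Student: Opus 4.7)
The plan is to approximate on the open $\mathcal{U}$ first to reduce to a complex essentially supported on $\mathcal{Z}$, then pull back to $\mathcal{X}'$, approximate there, and use the flat Mayer--Vietoris equivalence $D_{\operatorname{qc},\mathcal{Z}}(\mathcal{X}) \cong D_{\operatorname{qc},\mathcal{Z}'}(\mathcal{X}')$ of \eqref{eq:fmv-equiv} to transport the result back to $\mathcal{X}$. Fix a triple $(T,E,m)$ with $E$ being $(m-r)$-pseudocoherent and $H^{i}(E)$ supported on $T$ for $i \geq m-r$, where $r$ is chosen in terms of the approximation constants on $\mathcal{U}$ (for $T \cap |\mathcal{U}|$) and on $\mathcal{X}'$ (for $f^{-1}T$), together with $\operatorname{cd}(j)$ and $\operatorname{cd}(f)$. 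Applying approximation on $\mathcal{U}$ to $\mathbb{L}j^{*}E$ yields a compact $P_{U}$ supported on $T \cap |\mathcal{U}|$ and an $m$-isomorphism $P_{U} \to \mathbb{L}j^{*}E$. Using the Thomason condition on $\mathcal{X}$ (which follows from that on $\mathcal{U}$ and $\mathcal{X}'$ by a Mayer--Vietoris argument for compact generators), I would extend $P_{U}$ to a compact $P_{1}$ on $\mathcal{X}$ supported on $T$ with $\mathbb{L}j^{*}P_{1} \cong P_{U}$.

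The first subtlety is to lift $P_{U} \to \mathbb{L}j^{*}E$ to an actual morphism $P_{1} \to E$ on $\mathcal{X}$: this is a Thomason--Trobaugh-style extension-of-morphisms problem, whose obstruction lies in an $\operatorname{Ext}$ group over $\mathcal{X}$ and is killed by enlarging $P_{1}$ along a suitable compact extension, exactly as in the proof of \cite[Theorem C]{Hall/Rydh:2017}. Let $E_{1}$ be the cone of $P_{1} \to E$; then $\mathbb{L}j^{*}E_{1}$ has cohomology in degrees $<m$, so $\tau^{\geq m}E_{1}$ is supported on $\mathcal{Z}$. Pulling back to $\mathcal{X}'$ and applying approximation on $\mathcal{X}'$ to $\mathbb{L}f^{*}E_{1}$ produces a compact $P_{2}' \to \mathbb{L}f^{*}E_{1}$ which is an $m$-isomorphism, with $P_{2}'$ supported on $f^{-1}T$. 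Since $\mathbb{L}j'^{*}\mathbb{L}f^{*}E_{1}$ lies in $D^{<m}$, so does $\mathbb{L}j'^{*}P_{2}'$; extending this restriction to a compact complex $Q'$ on $\mathcal{X}'$ mapping to $P_{2}'$ (via Thomason on $\mathcal{X}'$) and taking the cofiber trims $P_{2}'$ to a compact complex $\tilde P_{2}'$ supported on $\mathcal{Z}' \cap f^{-1}T$ while preserving the $m$-isomorphism property in degrees $\geq m$.

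The flat Mayer--Vietoris equivalence \eqref{eq:fmv-equiv} then identifies $\tilde P_{2}'$ with a compact $P_{2} \in D_{\operatorname{qc},\mathcal{Z}}(\mathcal{X})$ supported on $T \cap \mathcal{Z}$. Because $\mathbb{L}j^{*}P_{2} = 0$, a short Mayer--Vietoris computation shows that $\operatorname{Hom}(P_{2},E_{1}) \to \operatorname{Hom}(\mathbb{L}f^{*}P_{2}, \mathbb{L}f^{*}E_{1})$ is an isomorphism, so the morphism on $\mathcal{X}'$ lifts canonically to $P_{2} \to E_{1}$, and this lift remains an $m$-isomorphism because that condition can be checked after pullback along the flat cover $\mathcal{X}' \sqcup \mathcal{U} \to \mathcal{X}$. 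Splicing $P_{1} \to E$ and $P_{2} \to E_{1}$ via the octahedral axiom then yields the desired compact $P$ on $\mathcal{X}$ supported on $T$ with an $m$-isomorphism $P \to E$.

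The main obstacles I anticipate are (i) the lifting of the morphism $P_{U} \to \mathbb{L}j^{*}E$ in the first Thomason extension step and (ii) trimming $P_{2}'$ to lie in $D_{\operatorname{qc},\mathcal{Z}'}(\mathcal{X}')$ without losing compactness or the $m$-isomorphism. Both are handled by Thomason--Trobaugh-style extensions of compact objects, which are the workhorse tools of quasi-finite flat descent as developed in \cite{Hall/Rydh:2017,Hall/Rydh:2018}. Careful bookkeeping of the constants arising on $\mathcal{U}$ and $\mathcal{X}'$, together with $\operatorname{cd}(j)$ and $\operatorname{cd}(f)$, is needed to produce a single $r$ that works uniformly on $\mathcal{X}$.
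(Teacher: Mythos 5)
Your overall architecture --- approximate on $\mathcal{U}$, cut down to a complex concentrated on the closed complement, and transport a supported approximation across the flat Mayer--Vietoris equivalence \eqref{eq:fmv-equiv} --- matches the paper's. However, there are two genuine gaps, both stemming from the same misconception about what Thomason's localization theorem actually gives you.

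First, you propose to ``extend $P_{\mathcal{U}}$ to a compact $P_1$ on $\mathcal{X}$ supported on $T$ with $\mathbb{L}j^{*}P_1 \cong P_{\mathcal{U}}$.'' This is not possible in general: the restriction functor $\mathbb{L}j^{*}\colon D_{\operatorname{qc}}(\mathcal{X})^{c}\to D_{\operatorname{qc}}(\mathcal{U})^{c}$ is \emph{not} essentially surjective even with the Thomason condition; there is a $K_0$-type obstruction. What Thomason's theorem gives is that $P_{\mathcal{U}}\oplus P_{\mathcal{U}}[d]$ extends for a suitable odd $d\geq 0$. The paper works precisely with $P_{\mathcal{U}}\oplus P_{\mathcal{U}}[d]$, choosing $d$ so that $\tau^{\geq m+d}P_{\mathcal{U}}\cong 0$, which is then what guarantees that the cone $E^\prime$ of the eventual $P\to E$ still satisfies $\tau^{\geq m}\mathbb{L}j^{*}E^\prime\cong 0$. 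Without the extra shifted summand your $P_1$ need not exist, and without control of $d$ the degree bound on the cone fails.

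Second, your ``trimming'' of $P_2^\prime$ via an extension $Q^\prime$ of its restriction to $\mathcal{U}^\prime$ has the same problem (compacts on $\mathcal{U}^\prime$ do not lift to $\mathcal{X}^\prime$ on the nose), and in any case it is an unnecessary detour. The paper avoids it entirely: once one knows $\tau^{\geq m}\mathbb{L}j^{*}E^\prime\cong 0$, the cohomology $H^{i}(E^\prime)$ for $i\geq m-r^\prime$ is supported on $T^\prime = T\setminus|\mathcal{U}|$, so one applies approximation on $\mathcal{X}^\prime$ for the \emph{closed set} $f^{-1}(T^\prime)$ rather than $f^{-1}(T)$, which directly produces a compact $F^\prime$ supported on the locus where $f$ is an isomorphism. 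Then $\mathbb{R}f_{*}F^\prime$ is compact on $\mathcal{X}$ by \cite[Lemma~5.9]{Hall/Rydh:2017}, and no trimming is needed. Your final splicing step and the use of conservativity of $\mathbb{L}f^{*}\oplus\mathbb{L}j^{*}$ along the faithfully flat cover (after replacing $\mathcal{X}^\prime$ by $\mathcal{X}^\prime\amalg\mathcal{U}$) are in the right spirit, but the two issues above need to be repaired before the argument closes.
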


\begin{proof}
    By replacing $\mathcal{X}^\prime$ with $\mathcal{X}^\prime \amalg \mathcal{U}$, we may assume that $f$ is faithfully flat. Let $T$ be a closed subset of $|\mathcal{X}|$ with quasi-compact complement $V$. By assumption, there exists an integer $r_{\mathcal{U}}$ such that approximation holds for any triple $(T\cap|\mathcal{U}|, G, m)$ on $\mathcal{U}$, where $G$ is $(m-r_{\mathcal{U}})$-pseudocoherent on $\mathcal{U}$ and $H^i(E)$ is supported on $T$ for $i\geq m-r_{\mathcal{U}}$. Let $T^\prime=T\setminus |\mathcal{U}|$. By assumption, $f$ restricts to an isomorphism $f^{-1}(T^\prime)\to T^\prime$ and its complement in $\mathcal{X}^\prime$ is also quasi-compact. Then there exists an integer $r^\prime$ such that approximation holds for all $(f^{-1}(T^\prime), G^\prime, m)$ on $\mathcal{X}^\prime$, where $G^\prime$ is $(m-r^\prime)$-pseudocoherent on $\mathcal{X}^\prime$ and $H^i(G^\prime)$ is supported on $T^\prime$ for $i\geq m-r^\prime$. Set $r=\max(r_{\mathcal{U}},r^\prime)$. We will show that approximation holds for all triples $(T,E,m)$ on $\mathcal{X}$, where $E$ is $(m-r)$-pseudocoherent on $\mathcal{X}$ and $H^i(E)$ is supported on $T$ for $i\geq m-r$.

    Consider the triple $(T\cap|\mathcal{U}|, \mathbb{L}j^{\ast}E,m)$. Since $r_{\mathcal{U}} \leq r$, $\mathbb{L}j^\ast E$ is $(m-r_{\mathcal{U}})$-pseudocoherent and $H^i(\mathbb{L}j^\ast E)$ is supported on $T \cap \mathcal{U}$ for $i\geq m-r_{\mathcal{U}}$. By assumption there is a compact complex $P_{\mathcal{U}}$ on $\mathcal{U}$ supported on $T \cap |\mathcal{U}|$ and an $m$-isomorphism  $P_{\mathcal{U}}\to \mathbb{L}j^{\ast}E$. Note that we have the following localization sequence \cite[Lemma 5.12]{Hall/Rydh:2017}:
    \begin{displaymath}
        D_{\operatorname{qc},f^{-1}(T^\prime)}(\mathcal{X}^\prime)\to D_{\operatorname{qc},f^{-1}(T)}(\mathcal{X}^\prime)\to D_{\operatorname{qc},f^{-1}(T) \cap \mathcal{U}^\prime}(\mathcal{U}^\prime).
    \end{displaymath}
    By Thomason's localization theorem \cite[Lemma 3.3, Theorem 3.12, Lemma 6.7]{Hall/Rydh:2017}, there exists a compact complex $P^\prime$ on $\mathcal{X}^\prime$ supported on $f^{-1}(T)$ such that $\mathbb{L}j^{\prime, \ast }P^\prime$ is isomorphic to $\mathbb{L}f_{\mathcal{U}}^{\ast}(P_{\mathcal{U}}\oplus P_{\mathcal{U}}[d])$, where $d\geq 0$ is odd and $\tau^{\geq m+d} P^\prime \cong 0$. This gives us the following composition
    \begin{displaymath}
        s_{\mathcal{U}^\prime}\colon \mathbb{L}j^{\prime, *}P^\prime\cong \mathbb{L}f_{\mathcal{U}}^{\ast}(P^\prime\oplus P^\prime[d])\to \mathbb{L}f_{\mathcal{U}}^{\ast}P^\prime\to \mathbb{L}f_{\mathcal{U}}^{\ast}\mathbb{L}j^{\ast}E\cong\mathbb{L}j^{\prime,\ast}\mathbb{L}f^{\ast}E.
    \end{displaymath}
    By Thomason's localization theorem \cite[Theorem 2.1.5]{Neeman:1996} applied to the localization sequence above, there exist morphisms $P^\prime \xleftarrow{\lambda} P^{\prime \prime} \xrightarrow{s^{\prime \prime}} \mathbb{L}f^\ast E$, where $P^{\prime \prime}$ is compact on $\mathcal{X}^\prime$ and supported on $f^{-1}(T)$ such that the cone $L$ of $\lambda$ satisfies $\mathbb{L}j^{\prime,\ast}L \cong 0$ and $\mathbb{L}j^{\prime,\ast} s^{\prime \prime} = s_{\mathcal{U}^\prime} \circ \mathbb{L}j^{\prime,\ast}\lambda$.

    By \cite[Lemma 5.9(3),(4)]{Hall/Rydh:2017}, there is a compact complex $P$ on $\mathcal{X}$ with support in $T$ such that $\mathbb{L}f^\ast P \cong P^{\prime \prime}$ and $\mathbb{L}j^\ast P \cong P_{\mathcal{U}}\oplus P_{\mathcal{U}}[d]$. The induced Mayer-Vietoris  exact sequence from \cite[Lemma 5.9(1)]{Hall/Rydh:2017} gives an exact sequence of abelian groups:
    \begin{displaymath}
        \begin{aligned}
            \operatorname{Hom}_{\mathcal{O}_{\mathcal{X}}}(P,E)& \to\operatorname{Hom}_{\mathcal{O}_{\mathcal{U}}}(\mathbb{L}j^{\ast}P, \mathbb{L}j^{\ast}E)\oplus\operatorname{Hom}_{\mathcal{O}_{\mathcal{X}^\prime}}(\mathbb{L}f^{\ast}P,\mathbb{L}f^{\ast}E)\\&\to\operatorname{Hom}_{\mathcal{O}_{\mathcal{U}^\prime}}(\mathbb{L}f_{\mathcal{U}}^{\ast}\mathbb{L}j^{\ast}P,\mathbb{L}f_{\mathcal{U}}^{\ast}\mathbb{L}j^{\ast}E),
        \end{aligned}
    \end{displaymath}
    and so, our maps $P_{\mathcal{U}} \oplus P_{\mathcal{U}}[d] \to \mathbb{L} j^{\ast}E$ and $P^{\prime \prime} \to \mathbb{L} f^\ast E$ give rise to a map $P \to E$. Now form the distinguished triangle:
    \begin{displaymath}
        P \to E \to E^\prime \to P[1].
    \end{displaymath}
    Then after restricting to $\mathcal{U}$ we obtain a diagram with distinguished rows and columns:
    \begin{displaymath}
        \xymatrix{P_{\mathcal{U}}[d]\ar[r] \ar[d] & 0 \ar[d]\ar[r] & \ar[d] P_{\mathcal{U}}[d+1] \ar@{=}[r] & P_{\mathcal{U}}[d+1] \ar[d] \\
        P_{\mathcal{U}} \oplus P_{\mathcal{U}}[d] \ar[d] \ar[r] & \mathbb{L}j^{\ast}E \ar@{=}[d]\ar[r] & \ar[d] \mathbb{L}j^{\ast}E^\prime \ar[r] & (P_{\mathcal{U}} \oplus P_{\mathcal{U}}[d])[1] \ar[d] \\
        P_{\mathcal{U}}\ar[r] & \mathbb{L}j^{\ast}E \ar[r] & E^{\prime \prime} \ar[r] & P_{\mathcal{U}}[1].}
    \end{displaymath}
    Now $\tau^{\geq m}E^{\prime \prime}$ is the zero object by construction, and $\tau^{\geq m}(P_{\mathcal{U}}[d+1])$ is also the zero object because $d\geq 0$ is chosen so that $\tau^{\geq m+d}P_{\mathcal{U}} \cong 0$. It follows that $\tau^{\geq m}\mathbb{L}j^\ast E^\prime$ is the zero object. Then $H^i(E^\prime)$ is supported on $T^\prime$ if $i\geq m-r^\prime \geq m-r$
    and so there exists a distinguished triangle
    \begin{displaymath}
        \xymatrix{F^\prime \ar[r] & \mathbb{L}f^{\ast}E^\prime \ar[r] & H^\prime \ar[r] & F[1]}
    \end{displaymath}
    on $\mathcal{X}^\prime$, where $F^\prime$ is a compact complex supported on $T^\prime$ and $\tau^{\geq m}H^\prime $ is the zero object. Since the square is a flat Mayer-Vietoris  square, there is an induced morphism $\mathbb{R}f_\ast F^\prime \to E^\prime$ such that the pullback is the composition $\mathbb{L}f^\ast \mathbb{R}f_\ast F^\prime \cong F^\prime \to \mathbb{L}f^\ast E^\prime$ \cite[Lemma 5.9(2)]{Hall/Rydh:2017}. Moreover, $F=\mathbb{R}f_{\ast}F^\prime$ is a compact complex on $\mathcal{X}$ supported on $T^\prime$ \cite[Lemma 5.9(4)]{Hall/Rydh:2017}. Now form the distinguished triangle:
    \begin{displaymath}
        \xymatrix{F \ar[r] & E^\prime \ar[r] & H \ar[r] & F[1].}
    \end{displaymath}
    Then $\mathbb{L}f^\ast H \cong H^\prime$ and so the flatness of $f$ implies that $0 \cong \tau^{\geq m}\mathbb{L}f^\ast H \cong \mathbb{L}f^\ast (\tau^{\geq m}H)$. Since $f$ is faithfully flat, $\tau^{\geq m} H \cong 0$. We now obtain a diagram with distinguished rows and columns:
    \begin{displaymath}
        \xymatrix{P \ar@{=}[d] \ar[r] & \tilde{P} \ar[r] \ar[d] & \ar[d] F \ar[r] & P[1] \ar@{=}[d]\\
    P \ar[r] \ar[d] & \ar[d] E \ar[r] & \ar[d] E^\prime \ar[r] & P[1] \ar[d] \\
    0 \ar[r] & H \ar@{=}[r] & H \ar[r] & 0.}
    \end{displaymath}
    Then $\tilde{P}$ is compact on $\mathcal{X}$, supported on $T$ and $\tau^{\geq m}H$ is the zero object, so $\tilde{P} \to E$ gives the desired approximation.
\end{proof}

We now establish the promised quasi-finite flat descent of approximation by compact complexes.

\begin{theorem}\label{thm:etale_descent_approximation}
    Let $f \colon \mathcal{X}^\prime \to \mathcal{X}$ be a morphism between quasi-compact and quasi-separated algebraic stacks that is representable, separated, quasi-finite, faithfully flat, and of finite presentation. Assume that for every quasi-compact, representable, separated, \'{e}tale morphism $\mathcal{W} \to \mathcal{X}^\prime$, $\mathcal{W}$ satisfies the Thomason condition and approximation by compacts holds. Then the same is true of $\mathcal{X}$.
\end{theorem}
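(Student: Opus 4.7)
The strategy is to apply the quasi-finite flat d\'evissage \cite[Theorem $\text{D}^\prime$]{Hall/Rydh:2018}, closely mirroring the d\'evissage argument of \Cref{thm:uniform-bound-ext-dim}. Let $\mathbf{E}$ be the category of algebraic stacks $\mathcal{V}$ equipped with a representable, separated, quasi-finite, flat morphism of finite presentation to $\mathcal{X}$, and let $\mathbf{F}\subseteq \mathbf{E}$ be the full subcategory of those $\mathcal{V}$ for which every quasi-compact, representable, separated, \'etale morphism $\mathcal{W}\to \mathcal{V}$ has the property that $\mathcal{W}$ satisfies the Thomason condition and approximation by compact complexes. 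The hypothesis of the theorem says precisely that $\mathcal{X}^\prime \in \mathbf{F}$, and the conclusion is equivalent to $\mathcal{X}\in\mathbf{F}$. By \cite[Theorem $\text{D}^\prime$]{Hall/Rydh:2018}, it suffices to verify:
\begin{enumerate}[label=($\text{D}$\arabic*)]
    \item if $(\mathcal{V}^\prime\to \mathcal{V})$ is \'etale in $\mathbf{E}$ and $\mathcal{V}\in \mathbf{F}$, then $\mathcal{V}^\prime \in \mathbf{F}$;
    \item if $(\mathcal{Y}^\prime \to \mathcal{Y})$ is a finite, flat, surjective morphism in $\mathbf{E}$ and $\mathcal{Y}^\prime \in \mathbf{F}$, then $\mathcal{Y}\in \mathbf{F}$;
    \item if $\mathcal{S}\hookrightarrow \mathcal{V}$ is a quasi-compact open immersion and $\mathcal{V}^\prime \to \mathcal{V}$ is \'etale in $\mathbf{E}$ and an isomorphism over $\mathcal{V}\setminus \mathcal{S}$, then $\mathcal{S},\mathcal{V}^\prime \in \mathbf{F}$ implies $\mathcal{V}\in \mathbf{F}$.
\end{enumerate}

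Condition (D1) is immediate, since the composite of two \'etale morphisms is \'etale, so any \'etale cover of $\mathcal{V}^\prime$ pulls back from one of $\mathcal{V}$. For (D2), given a quasi-compact, representable, separated, \'etale cover $\mathcal{W}\to \mathcal{Y}$ as in the definition of $\mathbf{F}$, form $\mathcal{W}^\prime \colonequals \mathcal{W}\times_{\mathcal{Y}} \mathcal{Y}^\prime \to \mathcal{W}$. This base change is finite, faithfully flat, of finite presentation, and $\mathcal{W}^\prime$ satisfies both properties by the assumption $\mathcal{Y}^\prime \in \mathbf{F}$. Thomason descends to $\mathcal{W}$ by the standard compact generation descent of \cite[Lemmas 6.7 \& 8.2]{Hall/Rydh:2017}, while approximation descends to $\mathcal{W}$ by \Cref{prop:finite_flat_decent_approximation}. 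For (D3), given an \'etale cover $\mathcal{W}\to\mathcal{V}$, pulling back the open immersion $\mathcal{S}\hookrightarrow \mathcal{V}$ and the \'etale morphism $\mathcal{V}^\prime \to \mathcal{V}$ produces a flat Mayer-Vietoris square with upper corners $\mathcal{W}\times_{\mathcal{V}}\mathcal{V}^\prime$ and $\mathcal{W}\times_{\mathcal{V}}\mathcal{S}$, which are \'etale covers of $\mathcal{V}^\prime$ and $\mathcal{S}$ respectively, and therefore satisfy Thomason and approximation by the assumption $\mathcal{S},\mathcal{V}^\prime \in \mathbf{F}$. The Thomason condition on $\mathcal{W}$ follows by combining the localization sequence of \cite[Lemma 5.12]{Hall/Rydh:2017} with Thomason's localization theorem as in \cite[Lemma 6.7]{Hall/Rydh:2017}; with Thomason in hand on all three pieces, approximation on $\mathcal{W}$ is precisely \Cref{prop:descending_approximation_along_etale_nbhd}.

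The main technical point is the interplay between the two properties in the Mayer-Vietoris step (D3): \Cref{prop:descending_approximation_along_etale_nbhd} takes the Thomason condition on the pieces as a standing hypothesis, so before one can invoke it one must first independently upgrade the Thomason condition from $\mathcal{S}$ and $\mathcal{V}^\prime$ to $\mathcal{V}$. Packaging both properties jointly into $\mathbf{F}$ rather than running two separate d\'evissages is what makes the inductive step go through; the remaining checks amount to routine manipulations with the compact generation machinery of \cite[\S\S5, 8]{Hall/Rydh:2017}.
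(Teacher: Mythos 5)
Your proposal is correct and follows essentially the same route as the paper: package the Thomason condition and compact approximation jointly into the subcategory $\mathbf{F}\subseteq\mathbf{E}$, run the quasi-finite flat d\'evissage of \cite[Theorem $\text{D}^\prime$]{Hall/Rydh:2018}, and reduce (D2) and (D3) to \Cref{prop:finite_flat_decent_approximation} and \Cref{prop:descending_approximation_along_etale_nbhd} together with the existing descent of the Thomason condition in \cite[\S6, \S8]{Hall/Rydh:2017}. One small wording slip: in (D1) an \'etale cover of $\mathcal{V}^\prime$ does not in general pull back from one of $\mathcal{V}$ -- the correct observation, which you state first, is that it composes to give an \'etale cover of $\mathcal{V}$.
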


\begin{proof}
    Let $\mathbf{E} = \mathbf{Stack}_{\text{rep,sep,qff,fp}/\mathcal{X}}$, whose objects are those $1$-morphisms $\mathcal{V} \to \mathcal{X}$ that are representable, separated, quasi-finite, flat, and of finite presentation.
    Let $\mathbf{App}$ be the full subcategory of $\mathbf{E}$ consisting of those $\mathcal{V}$, where for every \'{e}tale morphism $\mathcal{V}^\prime \to \mathcal{V}$ in $\mathbf{E}$, $\mathcal{V}^\prime$ satisfies the Thomason condition and approximation by compacts hold. We will apply \cite[Theorem\ $\text{D}^\prime$]{Hall/Rydh:2018} to $\mathbf{App}\subseteq\mathbf{E}$. To this end, we need to verify the following:
    \begin{enumerate}[label=($\text{D}$\arabic*)]
        \item\label{D1} if $(\mathcal{W}\to\mathcal{V})$ is an \'{e}tale morphism in $\mathbf{E}$ and $\mathcal{V}$ is in $\mathbf{App}$, then $\mathcal{W}$ is in $\mathbf{App}$.   
        \item\label{D2} If $(\mathcal{W}^\prime\to\mathcal{W})$ is a finite, flat and surjective morphism in $\mathbf{E}$, and $\mathcal{W}^\prime$ is in $\mathbf{App}$, then $\mathcal{W}$ is in $\mathbf{App}$.
        \item\label{D3} If $(\mathcal{W}\xrightarrow{i} \mathcal{V})$, $( \mathcal{V}^\prime\xrightarrow{v}\mathcal{V})$ are morphisms in $\mathbf{E}$, where $i$ is an open immersion and $v$ is \'{e}tale and an isomorphism on $\mathcal{V}\setminus \mathcal{W}$; then $\mathcal{W}$, $\mathcal{V}^\prime$ is in $\mathbf{App}$ implies that $\mathcal{V}$ is in $\mathbf{App}$.
    \end{enumerate}
    Now \ref{D1} is a tautology. Also, \ref{D2} and \ref{D3} follow from \Cref{prop:finite_flat_decent_approximation} and \Cref{prop:descending_approximation_along_etale_nbhd}, respectively (also see \cite[Theorem 6.9]{Hall/Rydh:2017} for the descent of the Thomason condition). This finishes the proof.
\end{proof}

It follows immediately from \Cref{thm:etale_descent_approximation} that we can prove \Cref{introthm:approximation_for_quasifinite} (on compact approximation for stacks with quasi-finite diagonal).
\begin{proof}[Proof of \Cref{introthm:approximation_for_quasifinite}]
      For (1), by \cite[Theorem\ 7.2]{Rydh:2011}, there is a quasi-finite, separated, and flat presentation $p\colon V\to \mathcal{X}$ from an affine scheme $V$. In particular, approximation by compact complexes holds for $V$ (e.g., \Cref{prop:approximation_resolution_property} or \cite[\href{https://stacks.math.columbia.edu/tag/08EQ}{Tag 08EQ}]{StacksProject}. Applying \Cref{thm:etale_descent_approximation} to $p$ finishes the proof. For (2), argue similarly to \cite[Theorem 7.4]{Hall/Rydh:2019}.
\end{proof}

Recall that an algebraic stack is of \textbf{s-global (compact) type} if it admits a separated \'{e}tale cover by a stack with the (compact) resolution property. See \cite{Rydh:2015} for the precise definition and examples for algebraic stacks of s-global type. An immediate consequence of \Cref{prop:approximation_resolution_property} and \Cref{thm:etale_descent_approximation} is the following.
\begin{corollary}\label{cor:approximation_for_good_moduli_spaces}
    If $\mathcal{X}$ is an algebraic stack of s-global compact type, then approximation by compact complexes holds for $\mathcal{X}$. 
\end{corollary}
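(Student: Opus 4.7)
The plan is to combine \Cref{prop:approximation_resolution_property} with the descent result \Cref{thm:etale_descent_approximation}. By the definition of s-global compact type, there is a representable separated étale surjection $p \colon \mathcal{X}' \to \mathcal{X}$ from a stack $\mathcal{X}'$ with the compact resolution property; taking $p$ quasi-compact (as is standard in Rydh's setup), $p$ is representable, separated, quasi-finite, faithfully flat, and of finite presentation, so the hypothesis on the morphism in \Cref{thm:etale_descent_approximation} is satisfied.

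The remaining condition to verify is that for every representable, separated, étale, quasi-compact $\mathcal{W} \to \mathcal{X}'$, the stack $\mathcal{W}$ satisfies both the Thomason condition and approximation by compact complexes. The Thomason condition follows from crispness of $\mathcal{X}'$, which is a standard consequence of the compact resolution property via the compact generation of $D_{\operatorname{qc}}(\mathcal{X}')$ from \cite[Proposition 8.4]{Hall/Rydh:2017}. For approximation on $\mathcal{W}$, I plan to verify that $\mathcal{W}$ inherits the compact resolution property from $\mathcal{X}'$ and then invoke \Cref{prop:approximation_resolution_property}. Concretely, the Totaro--Gross description \cite{Totaro:2004,Gross:2017} gives $\mathcal{X}' = [U/\operatorname{GL}_N]$ with $U$ quasi-affine, and pulling back along $\mathcal{W}\to\mathcal{X}'$ yields $\mathcal{W} = [\tilde U/\operatorname{GL}_N]$ with $\tilde U\to U$ étale, separated, and quasi-compact.

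I expect the main obstacle to lie in this inheritance step, since étale morphisms between schemes need not preserve quasi-affineness, and so the compact resolution property may not pass to $\mathcal{W}$ by the simple-minded route. If the direct argument fails, a safer alternative is to slightly strengthen \Cref{prop:approximation_resolution_property} to conclude approximation on every representable, separated, étale, quasi-compact neighborhood of a stack with the compact resolution property — by tracking through the original proof, whose only two essential uses of the resolution property (the compact generation of $D_{\operatorname{qc}}$ and the production of a surjection from a vector bundle onto the top cohomology sheaf) admit natural analogues on such neighborhoods via pullback of compact generators together with crispness. This upgraded statement would suffice to feed into \Cref{thm:etale_descent_approximation} and close the argument.
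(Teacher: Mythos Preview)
Your approach is correct and is exactly what the paper does: it declares the corollary an ``immediate consequence'' of \Cref{prop:approximation_resolution_property} and \Cref{thm:etale_descent_approximation}. The only issue is that the obstacle you anticipate does not exist, so your fallback route is unnecessary.

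Concretely, a representable, separated, \'etale morphism is quasi-finite, and by Zariski's Main Theorem a representable, separated, quasi-finite morphism of finite type is quasi-affine. In your Totaro--Gross presentation this says $\tilde{U}\to U$ factors as an open immersion followed by a finite morphism; since finite (hence affine) over quasi-affine is quasi-affine and a quasi-compact open in a quasi-affine scheme is quasi-affine, $\tilde{U}$ is quasi-affine and $\mathcal{W}=[\tilde{U}/\operatorname{GL}_N]$ has the resolution property. For the ``compact'' part, note that $f\colon \mathcal{W}\to\mathcal{X}'$ is concentrated (being representable), so $\mathbb{R}f_\ast$ preserves small coproducts and hence $f^\ast$ sends compact vector bundles to compact vector bundles; and since $f$ is quasi-affine, the counit $f^\ast f_\ast M\to M$ is an epimorphism for every quasi-coherent $M$. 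Combining these, every quasi-coherent sheaf on $\mathcal{W}$ is a quotient of a direct sum of pullbacks of compact vector bundles from $\mathcal{X}'$, so $\mathcal{W}$ inherits the compact resolution property and \Cref{prop:approximation_resolution_property} applies to it directly. The Thomason condition on $\mathcal{W}$ then also follows (e.g.\ via \cite[Proposition~8.4]{Hall/Rydh:2017}), so the hypotheses of \Cref{thm:etale_descent_approximation} are met.
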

A simple consequence is \Cref{introthm:approximation_for_good_moduli_spaces}.
\begin{proof}[Proof of \Cref{introthm:approximation_for_good_moduli_spaces}]
  By \cite[Theorem 5.7]{Rydh:2023}, $\mathcal{X}$ is of
  s-global compact type. Now apply \Cref{cor:approximation_for_good_moduli_spaces}.
\end{proof}

We also have the following refinement of \cite[Lemma A.4]{Hall:2022} and \cite[Lemma 3.18]{Lank/Olander:2024} (cf.\ \cite[\href{https://stacks.math.columbia.edu/tag/0CRW}{Tag 0CRW}]{StacksProject}).
\begin{corollary}
  Let $\pi \colon \mathcal{U} \to \mathcal{X}$ be a concentrated, flat,
  and finitely presented morphism of algebraic stacks. Form the
  distinguished triangle:
  \[
    \xymatrix{K \ar[r] & \mathcal{O}_{\mathcal{X}} \ar[r] & \mathbb{R}
      \pi_\ast \mathcal{O}_{\mathcal{U}} \ar[r] & K[1].}
  \]
  Let $M = K^{\otimes i}$ or $(\mathbb{R}\pi_\ast \mathcal{O}_{\mathcal{U}})^{\otimes i}$.
  Then
  $M \simeq \operatorname{hocolim}_{r} P_r$, where
  $P_r \in D^{\leq i\operatorname{cd}(\pi)}_{\operatorname{qc}}(X) \cap
  D_{\operatorname{qc}}(\mathcal{X})^c$, in the following
  situations:
  \begin{enumerate}
  \item \label{CI:hocolim_pres:countable} $\mathcal{X}$ is Noetherian
    and has approximation by compact complexes; or
  \item $\mathcal{X}$ is of s-global compact type and either $\pi$ is tame
    (e.g., representable) or $\pi$ has affine stabilizers and
    $\mathcal{X}$ is equicharacteristic.
  \end{enumerate}
\end{corollary}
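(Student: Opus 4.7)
The plan is to invoke \Cref{prop:approximation-countable} and refine its output by truncation. Two inputs are required: the amplitude of $M$ and its membership in $D^b_{\overline{\operatorname{coh}}}(\mathcal{X})$ (in case (1)), or an appropriate substitute (in case (2)).

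For the amplitude, concentration and flatness of $\pi$ give $\mathbb{R}\pi_\ast \mathcal{O}_{\mathcal{U}} \in D^{[0,\operatorname{cd}(\pi)]}_{\operatorname{qc}}(\mathcal{X})$; the defining triangle places $K$ in a bounded range controlled by $\operatorname{cd}(\pi)$, and a direct analysis of the derived tensor product yields $M \in D^{\leq i\operatorname{cd}(\pi)}_{\operatorname{qc}}(\mathcal{X})$. For case (1), the Noetherian hypothesis on $\mathcal{X}$ combined with $\pi$ being flat and of finite presentation implies that the cohomology sheaves $\mathcal{H}^j(\mathbb{R}\pi_\ast \mathcal{O}_{\mathcal{U}})$ lie in $\overline{\operatorname{coh}}(\mathcal{X})$: this is local on an affine chart and reduces to countable generation of cohomology of a finitely presented flat stack over a Noetherian ring. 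Since $\overline{\operatorname{coh}}(\mathcal{X})$ is a Serre subcategory stable under derived tensor product, $M \in D^b_{\overline{\operatorname{coh}}}(\mathcal{X})$. Now \Cref{prop:approximation-countable} supplies $M \simeq \operatorname{hocolim}_r P_r$ with each $P_r \in D_{\operatorname{qc}}(\mathcal{X})^c$ and each $\mathcal{H}^s(P_r) \to \mathcal{H}^s(M)$ a monomorphism for $r \gg 0$. Since $P_r$ is compact on a concentrated stack, it is perfect (\Cref{ex:strong_generator_examples}) and of bounded amplitude. For $s > i\operatorname{cd}(\pi)$ one has $\mathcal{H}^s(M) = 0$, and the monomorphism forces $\mathcal{H}^s(P_r) = 0$ once $r$ is sufficiently large. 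Discarding the early terms of the system yields a cofinal subsystem satisfying $P_r \in D^{\leq i\operatorname{cd}(\pi)}_{\operatorname{qc}}(\mathcal{X})$, completing case (1).

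For case (2), \Cref{cor:approximation_for_good_moduli_spaces} supplies approximation by compact complexes, but \Cref{prop:approximation-countable} does not apply directly because $\mathcal{X}$ may not be Noetherian. Here I would Noetherian-approximate: by the standard limit techniques for stacks of s-global compact type (cf.~\cite{Rydh:2015,Totaro:2004,Gross:2017}), $\mathcal{X}$ is realized as a cofiltered limit along affine transition maps of Noetherian stacks $\{\mathcal{X}_\alpha\}$ that are again of s-global compact type; $\pi$ descends to some stage to a morphism $\pi_\alpha \colon \mathcal{U}_\alpha \to \mathcal{X}_\alpha$ that remains concentrated, flat, finitely presented, and either tame or with affine stabilizers over an equicharacteristic base, with $\operatorname{cd}(\pi_\alpha) \leq \operatorname{cd}(\pi)$ by faithfully flat base change. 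Case (1) applied at stage $\alpha$ produces a homotopy colimit presentation of the analogue $M_\alpha$; pullback along $\mathcal{X} \to \mathcal{X}_\alpha$ preserves compactness, amplitudes, and homotopy colimits, giving the desired presentation for $M$. The main obstacle lies precisely in this reduction: one has to carefully verify that the dichotomy of sub-hypotheses on $\pi$ descends to a Noetherian model and that the amplitude bound is preserved under pullback, which is a routine but non-trivial application of the standard limit-preservation formalism for finitely presented stacks.
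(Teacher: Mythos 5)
Your proposal follows essentially the same route as the paper: in case (1) apply \Cref{prop:approximation-countable} after checking that $M$ lies in $D^b_{\overline{\operatorname{coh}}}(\mathcal{X})$ (i.e., countable generation of $\mathcal{H}^p(\mathbb{R}\pi_\ast\mathcal{O}_{\mathcal{U}})$, as in \cite[Lem.~A.4]{Hall:2022}), and reduce case (2) to case (1) via absolute Noetherian approximation and tor-independent base change, using \Cref{cor:approximation_for_good_moduli_spaces} so the Noetherian model satisfies approximation by compacts.

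One small caution on your handling of the amplitude bound, which the paper leaves implicit: \Cref{prop:approximation-countable} gives an $r_s$ depending on $s$, so ``discarding the early terms'' does not by itself produce a cofinal subsystem with $P_r \in D^{\leq i\operatorname{cd}(\pi)}_{\operatorname{qc}}(\mathcal{X})$ — you would need the $r_s$ to be bounded uniformly over $s > i\operatorname{cd}(\pi)$, which the stated conclusion of the proposition does not assert. The clean fix is to observe from the proof of \Cref{prop:approximation-countable} that each $P_r \to M_r$ is an $m_r$-isomorphism with $m_r \leq 0 \leq i\operatorname{cd}(\pi)$, so $\tau^{\geq i\operatorname{cd}(\pi)+1}P_r \simeq \tau^{\geq i\operatorname{cd}(\pi)+1}M_r$ vanishes provided the $M_n$ in the bounded-coherent exhaustion of $M$ are chosen in $D^{\leq i\operatorname{cd}(\pi)}$ (which the construction in \cite[Lem.~A.3]{Hall:2022} allows). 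With that adjustment the argument is the paper's.
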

\begin{proof}
  By absolute Noetherian approximation
  \cite{Rydh:2023}, tor-independent base change
  \cite[Cor.~4.13]{Hall/Rydh:2017} and 
  \Cref{cor:approximation_for_good_moduli_spaces}, it suffices to
  establish the result in case \eqref{CI:hocolim_pres:countable}. By
  \Cref{prop:approximation-countable}, it remains to prove
  that $\Gamma(V,\mathbb{R}^p \pi_\ast \mathcal{O}_{\mathcal{U}})$ is
  countably generated for every affine object $V$ of the lisse-\'etale
  topos of $\mathcal{X}$; now argue as in  \cite[Lemma A.4]{Hall:2022}.
\end{proof}

\section{D\'{e}vissage}
\label{sec:devissage}

\subsection{Descent}
\label{sec:devissage_descent}

This section explores the behavior of generation along the derived pushforward of a morphism between algebraic stacks. We will use the notion of descendability, which we will briefly recall. Our setting is for tensor triangulated categories, so we phrase things in that language. See \cite[Definition 3.18]{Mathew:2016} for the $\infty$-categorical analog. 
\begin{definition}[{\cite[Proposition 3.15 \& Definition 3.16]{Balmer:2016}}]\label{def:descendable_object}
    Let $(\mathcal{T},\otimes,\mathbf{1})$ be a tensor triangulated category. Suppose $A$ is a commutative monoid object $\mathcal{T}$. We say $A$ is \textbf{descendable}\footnote{\cite{Balmer:2016} imposes that $\mathcal{T}$ is essentially small, but this is not needed for our setting, see \cite[\S 6]{DeDeyn/Lank/KabeerManaliRahul:2025} for details.} if the smallest thick tensor-ideal containing $A$ coincides with $\mathcal{T}$. Equivalently, $A$ is descendable if and only if in a distinguished triangle $K \xrightarrow{\delta} \mathbf{1} \to A \to K[1]$ there exists $n$ such that $\delta^{\otimes n} = 0$.
\end{definition}
We say that a quasi-compact and quasi-separated morphism of algebraic stacks $f\colon \mathcal{Y} \to \mathcal{X}$ is \textbf{descendable} if $\mathbb{R}f_\ast \mathcal{O}_{\mathcal{Y}}$ is a descendable commutative monoid object in $D_{\operatorname{qc}}(\mathcal{X})$. 
\begin{remark}\label{rem:descendable-concentrated}
    If $f \colon \mathcal{Y} \to \mathcal{X}$ is a quasi-compact, quasi-separated and concentrated morphism of algebraic stacks, then $f$ is descendable if and only if there exists a $n\geq 0$ such that $D_{\operatorname{qc}}(\mathcal{X}) = \langle \mathbb{R}f_\ast D_{\operatorname{qc}}(\mathcal{Y})\rangle_n$. Indeed, choose $n\geq 0$ such that in the defining triangle $K \xrightarrow{\delta} \mathcal{O}_{\mathcal{X}} \to \mathbb{R}f_{\ast} \mathcal{O}_{\mathcal{Y}} \to K[1]$ we have $\delta^{\otimes n} = 0$. By \cite[Equation (6.5)]{Hall:2022} and the projection formula \cite[Corollary 4.12]{Hall/Rydh:2017}, it follows that:
    \[
        D_{\operatorname{qc}}(\mathcal{X})=\langle \mathbb{R} f_{\ast} \mathcal{O}_{\mathcal{Y}} \otimes D_{\operatorname{qc}}(\mathcal{X}) \rangle_n = \langle \mathbb{R} f_{\ast} D_{\operatorname{qc}}(\mathcal{Y}) \rangle_n.
    \]  
\end{remark}

We now prove \Cref{introthm:descendable_submersive}, which is a simple generalization of \cite[Theorem
B.2]{Hall/Priver:2024} (finite case with tame target), \cite[Theorem 7.1]{Hall:2022} (faithfully flat case) and \cite[Proposition 11.25]{Bhatt/Scholze:2017} (scheme case).

\begin{proof}[Proof of \Cref{introthm:descendable_submersive}]
    We first address the descendability. Let $p \colon \operatorname{Spec} (A) \to \mathcal{S}$ be a smooth
    covering and form the $2$-cartesian square:
    \[
    \xymatrix{\mathcal{Y}_A \ar[r]^{p_{\mathcal{Y}}} \ar[d]_{f_A} & \mathcal{Y} \ar[d]^{f} \\ \operatorname{Spec}(A)  \ar[r]_{p} & \mathcal{S}}
    \]
    By \cite[Theorem 7.1]{Hall:2022},
    $p$ is descendable. 
    Hence, it suffices to prove that
    $f_A$ is descendable \cite[Proposition 3.24(2)]{Mathew:2016} and so we may assume that $\mathcal{S}$ is an
    affine Noetherian scheme. Let
    $\operatorname{Spec} (B) \to \mathcal{Y}$ be a smooth covering. Then
    the induced composition
    $\beta \colon \operatorname{Spec} (B) \to \operatorname{Spec} (A)$ is a
    universally submersive morphism of Noetherian affine schemes. We have, from \cite[Proposition 11.25]{Bhatt/Scholze:2017}, that $\beta$ is descendable \cite[Proposition 11.25]{Bhatt/Scholze:2017}. Once more, by
    \cite[Proposition 3.24(2)]{Mathew:2016}, $f$ is
    descendable. 
\end{proof}

\begin{corollary}\label{cor:descendable_proper}
    If $f \colon \mathcal{Y} \to \mathcal{S}$ is a universally cohomologically proper and surjective morphism of Noetherian and concentrated algebraic stacks, then there exists an $n\geq 0$ such that $D^?_{\operatorname{coh}}(\mathcal{S}) = \langle \mathbb{R} f_\ast D^?_{\operatorname{coh}}(\mathcal{Y}) \rangle_n$, where $? \in \{b,-\}$.
\end{corollary}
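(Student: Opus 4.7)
My plan is to combine \Cref{introthm:descendable_submersive} with \Cref{rem:descendable-concentrated} to obtain an $n$ witnessing generation in $D_{\operatorname{qc}}(\mathcal{S})$, and then to refine this to $D^{?}_{\operatorname{coh}}$ by tensoring the defining descendability triangle with $E$ and invoking the projection formula. The only subtle point will be the case $?=b$, because the derived pullback of a bounded complex need not remain bounded.

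First I would verify that $f$ satisfies the hypotheses of \Cref{introthm:descendable_submersive}. Since $\mathcal{Y}$ is concentrated, so is the composite $\mathcal{Y}\to\mathcal{S}\to\operatorname{Spec}(\mathbf{Z})$, whence \Cref{lem:concentrated_composition_inner_most} forces $f$ itself to be concentrated. A universally cohomologically proper morphism is of finite type and universally closed; combined with surjectivity, this upgrades to universal submersivity. So \Cref{introthm:descendable_submersive} yields that $f$ is descendable, and \Cref{rem:descendable-concentrated} provides $n\geq 0$ such that the defining fibre sequence $K\xrightarrow{\delta}\mathcal{O}_{\mathcal{S}}\to\mathbb{R}f_{\ast}\mathcal{O}_{\mathcal{Y}}\to K[1]$ satisfies $\delta^{\otimes n}=0$, and $D_{\operatorname{qc}}(\mathcal{S})=\langle\mathbb{R}f_{\ast}D_{\operatorname{qc}}(\mathcal{Y})\rangle_{n}$. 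Iterating the triangle obtained by tensoring with a given $E$ and applying the projection formula \cite[Corollary 4.12]{Hall/Rydh:2017} then places $E\in\langle\mathbb{R}f_{\ast}\mathbb{L}f^{\ast}E\rangle_{n}$, with the generators at each level having the shape $\mathbb{R}f_{\ast}\mathbb{L}f^{\ast}(K^{\otimes i}\otimes^{\mathbb{L}}E)$ for $0\le i<n$.

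For $?=-$, the refinement is immediate: $\mathbb{L}f^{\ast}$ preserves pseudocoherence and bounded-above complexes, so $\mathbb{L}f^{\ast}(K^{\otimes i}\otimes^{\mathbb{L}}E)\in D^{-}_{\operatorname{coh}}(\mathcal{Y})$, giving $E\in\langle\mathbb{R}f_{\ast}D^{-}_{\operatorname{coh}}(\mathcal{Y})\rangle_{n}$. The main obstacle is the case $?=b$: since $\mathbb{R}f_{\ast}\mathcal{O}_{\mathcal{Y}}$ need not have finite Tor-amplitude over $\mathcal{O}_{\mathcal{S}}$, the pullback $\mathbb{L}f^{\ast}E$ of a bounded coherent $E$ is only guaranteed to lie in $D^{-}_{\operatorname{coh}}(\mathcal{Y})$. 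I would handle this by exploiting that $f$ is concentrated, so $\mathbb{R}f_{\ast}$ has finite cohomological amplitude $d$: each generator $\mathbb{R}f_{\ast}\mathbb{L}f^{\ast}(K^{\otimes i}\otimes^{\mathbb{L}}E)$ can be replaced by $\mathbb{R}f_{\ast}(\tau^{\geq -M}\mathbb{L}f^{\ast}(K^{\otimes i}\otimes^{\mathbb{L}}E))$ with $M$ chosen large relative to $d$ and the cohomological range of $E$, so that the cofibre pushes forward into degrees well below the window relevant to the reconstruction of $E$ and can be absorbed into the filtration at the cost of enlarging $n$ by a bounded constant. These truncations lie in $D^{b}_{\operatorname{coh}}(\mathcal{Y})$, yielding $E\in\langle\mathbb{R}f_{\ast}D^{b}_{\operatorname{coh}}(\mathcal{Y})\rangle_{n'}$ for some $n'$ depending only on $n$ and $d$, which completes the argument.
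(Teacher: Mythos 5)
Your derivation of descendability is correct and matches the paper exactly: $f$ is concentrated by \Cref{lem:concentrated_composition_inner_most}, universal submersivity follows because universally cohomologically proper morphisms are universally closed, and \Cref{introthm:descendable_submersive} plus \Cref{rem:descendable-concentrated} (together with the projection formula, i.e., Equation (6.5) of Hall:2022) give $D^-_{\operatorname{coh}}(\mathcal{S}) = \langle \mathbb{R}f_\ast D^-_{\operatorname{coh}}(\mathcal{Y})\rangle_n$. The $?=-$ case of your argument is therefore sound.

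For $?=b$ there is a genuine gap. You correctly identify the obstacle (a bounded complex $E$ need not pull back to a bounded complex), but the sentence claiming that the cofibres of the truncations $\tau^{\geq -M}\mathbb{L}f^\ast(K^{\otimes i}\otimes^{\mathbb{L}} E) \hookrightarrow \mathbb{L}f^\ast(K^{\otimes i}\otimes^{\mathbb{L}} E)$ ``push forward into degrees well below the window relevant to the reconstruction of $E$ and can be absorbed into the filtration at the cost of enlarging $n$ by a bounded constant'' is an assertion, not an argument. The tower from the descendability triangle has intermediate terms $K^{\otimes i}\otimes^{\mathbb{L}} E$ that are \emph{not} bounded below (the derived tensor powers of $K$ accumulate Tor in arbitrarily negative degrees since $K$ is not perfect), so there is no fixed ``window'' in which the reconstruction happens; truncating a single term of the tower changes the extension data at all levels, and you would need an explicit induction on $n$ to control this (the $n=1$ case uses that $\operatorname{Hom}(E,-)$ kills objects concentrated far below $E$; the inductive step is the nontrivial part). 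The paper handles precisely this point by citing \cite[Lemma 3.10]{Dey/Lank:2024}, which shows that if $M\in \langle \mathbb{R}f_\ast D^-_{\operatorname{coh}}(\mathcal{Y})\rangle_n \cap D^b_{\operatorname{coh}}(\mathcal{S})$ then already $M\in\langle\mathbb{R}f_\ast D^b_{\operatorname{coh}}(\mathcal{Y})\rangle_n$ (no enlargement of $n$); your proof would need to either cite such a lemma or carry out the truncation induction carefully rather than waving at it.
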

\begin{proof}
    By \Cref{lem:concentrated_composition_inner_most}, $f$ is concentrated. It follows from \Cref{introthm:descendable_submersive}, \Cref{rem:descendable-concentrated} and \cite[Equation (6.5)]{Hall:2022} that $D^-_{\operatorname{coh}}(\mathcal{S}) = \langle \mathbb{R} f_\ast D^-_{\operatorname{coh}}(\mathcal{Y}) \rangle_n$ for some $n\geq 0$. It remains to prove that if $M \in \langle \mathbb{R} f_\ast D^-_{\operatorname{coh}}(\mathcal{Y}) \rangle_n \cap D^b_{\operatorname{coh}}(\mathcal{S})$, then $M \in \langle \mathbb{R} f_\ast D^b_{\operatorname{coh}}(\mathcal{Y}) \rangle_n$, which is just \cite[Lemma 3.10]{Dey/Lank:2024} (note that this is for schemes, but the argument is identical).
\end{proof}

\begin{corollary}\label{cor:bound_rouquier_dimension}
    Let $f \colon \mathcal{Y} \to \mathcal{S}$ be a universally cohomologically proper and surjective morphism of Noetherian and concentrated algebraic stacks. If $\mathcal{S}$ satisfies approximation by compacts, then 
    \begin{displaymath}
        \dim D^b_{\operatorname{coh}}(\mathcal{S}) \leq (\dim D^b_{\operatorname{coh}}(\mathcal{Y})+ 1) \cdot \min\{ \operatorname{level}^{\mathbb{R}f_\ast G} (\mathcal{O}_\mathcal{S}) : G \in D^b_{\operatorname{coh}}(\mathcal{Y}) \} -1.
    \end{displaymath}
    In particular, if $G$ is an object of $D^b_{\operatorname{coh}}(\mathcal{X})$ and $n\geq 0$ such that $\langle G \rangle_n = D^b_{\operatorname{coh}}(\mathcal{X})$, then $\langle \mathbb{R}f_\ast G \rangle_{nL}=D^b_{\operatorname{coh}}(\mathcal{S})$ where $L:=\min\{ \operatorname{level}^{\mathbb{R}f_\ast G} (\mathcal{O}_\mathcal{S}) : G \in D^b_{\operatorname{coh}}(\mathcal{Y}) \}$.
\end{corollary}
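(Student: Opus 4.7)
The plan is to leverage the descendability of $f$ granted by \Cref{introthm:descendable_submersive} together with the approximation hypothesis on $\mathcal{S}$ through \Cref{lem:rouq_dim_stacks_via_approx}. Since $f$ is universally cohomologically proper and surjective it is universally submersive, and since $\mathcal{S}$ and $\mathcal{Y}$ are Noetherian and concentrated, $f$ is itself concentrated by \Cref{lem:concentrated_composition_inner_most}. Then \Cref{introthm:descendable_submersive} and \Cref{rem:descendable-concentrated} place $\mathcal{O}_{\mathcal{S}}$ in $\langle \mathbb{R}f_\ast \mathcal{O}_{\mathcal{Y}} \rangle_k$ for some $k$, so the minimum $L$ appearing in the statement is finite.

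Fix $G \in D^b_{\operatorname{coh}}(\mathcal{Y})$ with $\langle G \rangle_n = D^b_{\operatorname{coh}}(\mathcal{Y})$, and let $G_0 \in D^b_{\operatorname{coh}}(\mathcal{Y})$ realise the minimum, so $\mathcal{O}_{\mathcal{S}} \in \langle \mathbb{R}f_\ast G_0 \rangle_L$. Since $G_0 \in \langle G \rangle_n$ and $\mathbb{R}f_\ast$ preserves cones, shifts, sums and retracts, we have $\mathbb{R}f_\ast G_0 \in \langle \mathbb{R}f_\ast G \rangle_n$; combining with the standard nesting $\langle \langle \mathcal{S}' \rangle_a \rangle_b \subseteq \langle \mathcal{S}' \rangle_{ab}$ yields $\mathcal{O}_{\mathcal{S}} \in \langle \mathbb{R}f_\ast G \rangle_{nL}$. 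For any $P \in \operatorname{Perf}(\mathcal{S})$, tensoring this containment with $P$ and invoking the projection formula \cite[Corollary 4.12]{Hall/Rydh:2017} gives $P \in \langle \mathbb{R}f_\ast(G_0 \otimes^{\mathbb{L}} \mathbb{L}f^\ast P) \rangle_L$. Because $\mathbb{L}f^\ast P$ is perfect on $\mathcal{Y}$, the object $G_0 \otimes^{\mathbb{L}} \mathbb{L}f^\ast P$ remains in $D^b_{\operatorname{coh}}(\mathcal{Y}) = \langle G \rangle_n$, and one more pushforward together with the same nesting delivers $P \in \langle \mathbb{R}f_\ast G \rangle_{nL}$.

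At this stage one knows $\operatorname{Perf}(\mathcal{S}) \subseteq \langle \mathbb{R}f_\ast G \rangle_{nL}$ and $\mathbb{R}f_\ast G \in D^b_{\operatorname{coh}}(\mathcal{S})$, and the approximation hypothesis on $\mathcal{S}$ allows \Cref{lem:rouq_dim_stacks_via_approx} to upgrade this to $\langle \mathbb{R}f_\ast G \rangle_{nL} = D^b_{\operatorname{coh}}(\mathcal{S})$; this is the ``in particular'' assertion. Specialising to a strong generator $G$ of $D^b_{\operatorname{coh}}(\mathcal{Y})$ with generation time $\dim D^b_{\operatorname{coh}}(\mathcal{Y})$, so that $n = \dim D^b_{\operatorname{coh}}(\mathcal{Y}) + 1$ suffices, then yields the displayed bound on $\dim D^b_{\operatorname{coh}}(\mathcal{S})$. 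The delicate point to watch is that $G_0 \otimes^{\mathbb{L}} \mathbb{L}f^\ast H$ need not be bounded for a general $H \in D^b_{\operatorname{coh}}(\mathcal{S})$, which is precisely why one first reduces to perfect $P$ and only then crosses back to all bounded coherent complexes via the approximation theorem.
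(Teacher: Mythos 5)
Your proof is essentially correct and follows the same strategy as the paper, which deduces the result by arguing as in a Lank--Olander proposition with \Cref{cor:descendable_proper} and \Cref{lem:rouq_dim_stacks_via_approx} in hand. The core moves --- tensoring the containment $\mathcal{O}_{\mathcal{S}} \in \langle \mathbb{R}f_\ast G_0 \rangle_L$ with a perfect complex $P$, transporting via the projection formula to land in $\langle \mathbb{R}f_\ast(G_0 \otimes^{\mathbb{L}} \mathbb{L}f^\ast P)\rangle_L \subseteq \langle \mathbb{R}f_\ast G\rangle_{nL}$, and then upgrading $\operatorname{Perf}(\mathcal{S}) \subseteq \langle \mathbb{R}f_\ast G\rangle_{nL}$ to all of $D^b_{\operatorname{coh}}(\mathcal{S})$ via \Cref{lem:rouq_dim_stacks_via_approx} --- are exactly what is intended, and you correctly isolate why the detour through $\operatorname{Perf}(\mathcal{S})$ is necessary rather than tensoring with an arbitrary bounded coherent complex.

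One inaccuracy: you assert that \Cref{introthm:descendable_submersive} together with \Cref{rem:descendable-concentrated} places $\mathcal{O}_{\mathcal{S}}$ in $\langle \mathbb{R}f_\ast \mathcal{O}_{\mathcal{Y}} \rangle_k$ for some $k$. What those results actually yield is $\mathcal{O}_{\mathcal{S}} \in \langle \mathbb{R}f_\ast D_{\operatorname{qc}}(\mathcal{Y}) \rangle_k$; descendability puts $\mathcal{O}_{\mathcal{S}}$ in the thick $\otimes$-ideal generated by $\mathbb{R}f_\ast \mathcal{O}_{\mathcal{Y}}$, which via the projection formula is built from pushforwards of various (a priori unbounded) objects, not from $\mathbb{R}f_\ast \mathcal{O}_{\mathcal{Y}}$ alone. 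To conclude finiteness of $L$ you should instead invoke \Cref{cor:descendable_proper}, which gives $D^b_{\operatorname{coh}}(\mathcal{S}) = \langle \mathbb{R}f_\ast D^b_{\operatorname{coh}}(\mathcal{Y})\rangle_k$; since $\mathcal{O}_{\mathcal{S}}$ is then built from $\mathbb{R}f_\ast$ of finitely many bounded coherent objects, a direct sum furnishes a single $G_0 \in D^b_{\operatorname{coh}}(\mathcal{Y})$ with $\mathcal{O}_{\mathcal{S}} \in \langle \mathbb{R}f_\ast G_0\rangle_k$. The passage from $D_{\operatorname{qc}}$ to $D^b_{\operatorname{coh}}$ in \Cref{cor:descendable_proper} uses a truncation argument (citing Dey--Lank) and is not automatic, so this step should be flagged explicitly rather than read off from \Cref{rem:descendable-concentrated}.
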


\begin{proof}
    Argue as in \cite[Proposition 3.16]{Lank/Olander:2024}, but use \Cref{cor:descendable_proper} and \Cref{lem:rouq_dim_stacks_via_approx}.
\end{proof}

\begin{example}\label{ex:rouq-tame-ms}
    Let $\mathcal{X}$ be an algebraic stack that is tame, Noetherian and has finite inertia. Denote the coarse moduli space by $\pi\colon \mathcal{X} \to \mathcal{X}_{\operatorname{cs}}$. Then $\mathcal{X}_{\operatorname{cs}}$ is a Noetherian algebraic space, $\pi$ is a concentrated and proper surjection and $\mathcal{O}_{\mathcal{X}_{\operatorname{cs}}} \to \mathbb{R}\pi_\ast \mathcal{O}_\mathcal{X}$ is an isomorphism in $D_{\operatorname{qc}}(\mathcal{X}_{\operatorname{cs}})$. By \Cref{cor:bound_rouquier_dimension}, $\dim D^b_{\operatorname{coh}}(\mathcal{X}_{\operatorname{cs}})\leq \dim D^b_{\operatorname{coh}}(\mathcal{X})$. 
\end{example}
\Cref{ex:rouq-tame-ms} essentially recovers \cite[Rem.~2.18]{Ballard/Favero:2012}. We can now prove \Cref{introthm:rouq_dim_bound_tame_stack_coarse_moduli}, which is similar %
to \cite[Lemma 2.17]{Ballard/Favero:2012}.
\begin{proof}[Proof of \Cref{introthm:rouq_dim_bound_tame_stack_coarse_moduli}]
    We are free to assume that $D^b_{\operatorname{coh}}(\mathcal{X})$ admits a strong generator as otherwise there is nothing to check. Let $G$ be an object of $D^b_{\operatorname{coh}}(\mathcal{X})$ and $n\geq 0$ such that $\langle G \rangle_n = D^b_{\operatorname{coh}}(\mathcal{X})$. By \cite[\href{https://stacks.math.columbia.edu/tag/08HP}{Tag 08HP}]{StacksProject} or \Cref{introthm:approximation_for_quasifinite}, $Y$ satisfies approximation by compacts. Hence, by \Cref{cor:bound_rouquier_dimension}, one has $D^b_{\operatorname{coh}}(Y) = \langle \mathbb{R}\pi_\ast G \rangle_n$. It suffices to show that $\dim Y \leq n$.
 
    Observe that the schematic locus of $Y$, denoted $U$, is a dense open subspace \cite[\href{https://stacks.math.columbia.edu/tag/06NH}{Tag 06NH}]{StacksProject}. Denote the associated open immersion by $j\colon U\to Y$. It suffices, from the Verdier localization $j^\ast \colon D^b_{\operatorname{coh}}(Y) \to D^b_{\operatorname{coh}}(U)$ (see \Cref{prop:verdier}), to show $\dim Y \leq \dim D^b_{\operatorname{coh}}(U)$. We can, if needed, intersect the regular locus of $Y$ with $U$ (which is nonempty as it contains generic point). Then, by shrinking, one can impose further $U$ is affine. Let $G$ be a strong generator for $D^b_{\operatorname{coh}}(U)$ with minimal generation time. There is, by \cite[Lemma 3.1]{Dey/Lank:2024}, a nonempty open subscheme $V$ of $U$ such that $G|_V$ and $\mathcal{O}_V$ finitely build one another in one cone. Our hypothesis on $Y$ ensures that $\dim V = \dim Y$
   Let $p$ be a closed point of $V$ such that $\dim \mathcal{O}_{V,p}=\dim V$. Denote the associated closed immersion by $i\colon \operatorname{Spec}(\kappa(p)) \to V$. It follows, from \cite[Corollary 4.3.13]{Letz:2020} coupled with \cite[\href{https://stacks.math.columbia.edu/tag/00OB}{Tag 00OB}]{StacksProject}, that $\operatorname{level}^{\mathcal{O}_{UV,p}}(i_\ast \mathcal{O}_{\kappa(p)}) = \dim X + 1$. Then the desired lower bound comes from the following string of inequalities:
   \begin{align*}
            \dim D^b_{\operatorname{coh}}(\mathcal{X}) &\geq \dim D^b_{\operatorname{coh}}(Y)
            \geq \dim D^b_{\operatorname{coh}}(U)
            = \operatorname{gen.time}(G) - 1\\
            &\geq \operatorname{gen.time}(G|_V) - 1
            = \operatorname{gen.time}(\mathcal{O}_V) -1
            \geq \operatorname{level}^{\mathcal{O}_{V,p}}(i_\ast \mathcal{O}_{\kappa(p)})  -1\\ 
            &= \dim V  
            = \dim Y. \qedhere
    \end{align*}
\end{proof}

\begin{remark} 
    We mention two variations of \Cref{introthm:rouq_dim_bound_tame_stack_coarse_moduli} for the interested reader. First, recall the \textbf{singularity category} of an algebraic stack $\mathcal{Y}$ is defined as the Verdier localization of $D^b_{\operatorname{coh}}(\mathcal{Y})/\operatorname{perf}(\mathcal{Y})$.\footnote{This was initially introduced in algebra by \cite{Buchweitz/Appendix:2021}, later rediscovered for schemes \cite{Orlov:2004}, and recently studied for algebraic stacks \cite{Bergh/Lunts/Schnurer:2016}.} Since the natural map $\mathcal{O}_Y \to \mathbb{R}f_\ast \mathcal{O}_{\mathcal{X}}$ splits, the projection formula implies that the unit map $E \to \mathbb{R}f_\ast \mathbb{L}f^\ast E$ splits. If $f$ is additionally assumed to be flat, then $\mathbb{R}f_\ast \colon D^b_{\operatorname{coh}}(\mathcal{X}) \to D^b_{\operatorname{coh}}(Y)$ and $\mathbb{R}f_\ast \colon \operatorname{Perf}(\mathcal{X}) \to \operatorname{Perf}(Y)$ are essentially dense. In particular, we obtain an (induced) exact functor $\overline{\pi}_\ast \colon D_{\operatorname{sg}}(\mathcal{X})\to D_{\operatorname{sg}}(Y)$, which is also essentially dense. 
    Also, one can show the `countable Rouquier dimension' (\`{a} la \cite{Olander:2023}) of $D^b_{\operatorname{coh}}(Y)$ is at most that of $D^b_{\operatorname{coh}}(\mathcal{X})$. See \cite{Peng:2024} for a recent study on countable Rouquier dimension for algebraic stacks of finite type over a field.
\end{remark}

\subsection{Ascent}
\label{sec:devissage_ascent}

This section studies the behavior of generation under derived pullback along a morphism of algebraic stacks. Our work significantly extends a previously known story regarding Noetherian schemes and finite Galois extensions, cf. \cite{Sosna:2014}. 
The following are instances which exhibit unfavorable behavior to generation in this context.

\begin{example}\label{ex:bad_behavior_1}
    Let $k$ be a perfect field. Choose an integer $n\geq 3$. Consider the field extension $k(x_1\ldots,x_n)$ over $k$ where each $x_i$ is transcendental over $k$. There exists a fibered square:
    \begin{displaymath}
        \begin{tikzcd}
            {X:=\operatorname{Spec}\big(k(x_1,\ldots,x_n)\otimes_k k(x_1,\ldots,x_n)\big)} & {\operatorname{Spec}\big(k(x_1,\ldots,x_n)\big)} \\
            {\operatorname{Spec}\big(k(x_1,\ldots,x_n)\big)} & {\operatorname{Spec}(k).}
            \arrow[from=1-1, to=1-2]
            \arrow[from=1-2, to=2-2]
            \arrow[from=1-1, to=2-1]
            \arrow[from=2-1, to=2-2]
        \end{tikzcd}
    \end{displaymath}
    Note that $X$ is an affine Noetherian scheme \cite[Theorem 11]{Vamos:1978}, has Krull dimension $n$ \cite{Sharp:1977}, and it is reduced \cite[$\S 5.15$, Thereom 3]{Bourbaki/AlgebraII:2003}. We know that the Rouquier dimension of $D^b_{\operatorname{coh}}(X)$ is at least $n-1$, see \cite[Corollary 6.6]{Aihara/Takahashi:2015}. However, the Rouquier dimension of $D^b_{\operatorname{coh}}(k(x_1,\ldots,x_n))$ is zero. 
\end{example}

\begin{example}\label{ex:bad_behavior_2}
    Let $k:= \mathbb{F}_3 (t)$ where $t$ is transcendental. Consider the projective plane curve $C$ over $k$ given by the equation $y^2 z + x^3 - t z^3 = 0$. This is a regular Noetherian scheme as it is normal. However, if we consider the base change $C^\prime$ of $C$ along the field extension $\mathbb{F}_3 (t^{\frac{1}{3}})$, then $C^\prime$ is a singular projective curve, see \cite[Remark 16]{Kollar:2011}. Let $\pi \colon C^\prime \to C$ be the projection morphism. We know that $C^\prime$ is an affine morphism. If $G$ is a compact generator for $D_{\operatorname{qc}}(C)$, then $\pi^\ast G$ is a compact generator for $D_{\operatorname{qc}}(C^\prime)$ \cite[\href{https://stacks.math.columbia.edu/tag/0BQT}{Tag 0BQT}]{StacksProject}. Note that any strong generator for $D^b_{\operatorname{coh}}(C)$ is a compact generator for $D_{\operatorname{qc}}(C)$ as $C$ is regular. Hence, $\pi^\ast G$ cannot be a strong generator for $D^b_{\operatorname{coh}}(C^\prime)$ as $C^\prime$ is singular. 
\end{example}

\Cref{ex:bad_behavior_1} tells us that Rouquier dimension can strictly increase along base of a field extension, whereas \Cref{ex:bad_behavior_2} is an instance where the derived pullback fails to preserve a strong generator if one base changes along a field extension. With these in mind, we attempt to identify a situation where such behavior ceases to exist, which motivates the following notion.

\begin{definition}\label{def:smooth_alteration}
    Let $\mathcal{B}$ be an algebraic stack. Let $\mathcal{Y}$ be an algebraic stack over $\mathcal{B}$. A \textbf{weak smooth $\mathcal{B}$-alteration} of $\mathcal{Y}$ is a concentrated, universally cohomologically proper, and surjective morphism $\pi \colon \tilde{\mathcal{Y}} \to \mathcal{Y}$ of algebraic stacks, where $\tilde{\mathcal{Y}}$ is smooth over $\mathcal{B}$.
\end{definition}

\begin{example}\label{ex:dejong-alteration}
  Let $k$ be a perfect field. Let $\mathcal{Y}$ be an algebraic stack of finite type over $k$ with quasi-finite and separated diagonal. Then there is a weak smooth $\operatorname{Spec}(k)$-alteration $\tilde{\mathcal{Y}} \to \mathcal{Y}$. Indeed, by \cite[Theorem B]{Rydh:2015}, there is a finite surjection $\mathcal{Y}_1 \to \mathcal{Y}$, where $\mathcal{Y}_1$ is a separated scheme of finite type over $k$. Now apply \cite{deJong-alterations} to $\mathcal{Y}_1$, which provides an alteration $\pi_1 \colon \tilde{\mathcal{Y}} \to \mathcal{Y}_1$, where $\pi_1$ is a proper and generically finite morphism of schemes and $\mathcal{Y}$ is regular. Since $k$ is perfect, $\mathcal{Y}$ is geometrically regular and so smooth over $k$; it follows that the composition $\tilde{\mathcal{Y}} \to \mathcal{Y}_1 \to \mathcal{Y}$ is a weak smooth $\operatorname{Spec}(k)$-alteration.
\end{example}

Before stating our result, we record a known lemma for convenience.

\begin{lemma}\label{lem:pullback_compact_generator_convservative}
    Suppose $\mathcal{T},\mathcal{S}$ are triangulated categories that are compactly generated by single objects. Let $\pi_\ast \colon \mathcal{T} \to \mathcal{S}$ be an exact functor which admits a left adjoint $\pi^\ast \colon \mathcal{S} \to \mathcal{T}$. Then the following are equivalent: 
    \begin{enumerate}
        \item $\pi_\ast \colon \mathcal{T} \to \mathcal{S}$ is conservative;\footnote{This means $\pi_\ast P\cong\pi_\ast Q$ implies $P\cong Q$.}
        \item $\pi_\ast \colon \mathcal{T} \to \mathcal{S}$ reflects the zero object;\footnote{This means $\pi_\ast E \cong 0$ implies $E\cong 0$.}
        \item $\pi^\ast P$ is a compact generator for $\mathcal{T}$ whenever $P$ is such for $\mathcal{S}$.
    \end{enumerate}
\end{lemma}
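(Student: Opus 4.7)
The plan is to establish the chain $(1) \Leftrightarrow (2) \Leftrightarrow (3)$ by the usual combination of cone arguments, adjunction, and compact generation. For $(1) \Leftrightarrow (2)$, I would use that $\pi_{\ast}$ is exact. In one direction, setting $Q = 0$ in the footnote formulation of (1) yields (2) immediately. For the converse, given any morphism $f\colon P \to Q$ in $\mathcal{T}$, form the distinguished triangle $P \to Q \to \operatorname{cone}(f) \to P[1]$; then $\pi_\ast f$ is an isomorphism precisely when $\pi_\ast \operatorname{cone}(f) \simeq 0$, and (2) converts this into $\operatorname{cone}(f) \simeq 0$, i.e.\ $P \simeq Q$.

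For $(2) \Rightarrow (3)$, let $P$ be a compact generator of $\mathcal{S}$. To show $\pi^\ast P$ generates $\mathcal{T}$, take $E \in \mathcal{T}$ with $\operatorname{Hom}_{\mathcal{T}}(\pi^\ast P[n], E) = 0$ for all $n$; the adjunction rewrites this as $\operatorname{Hom}_{\mathcal{S}}(P[n], \pi_\ast E) = 0$, and compact generation of $\mathcal{S}$ by $P$ forces $\pi_\ast E \simeq 0$, hence $E \simeq 0$ by (2). The reverse implication $(3) \Rightarrow (2)$ is the same computation run backwards: if $\pi_\ast E \simeq 0$, then by adjunction all $\operatorname{Hom}_{\mathcal{T}}(\pi^\ast P[n], E)$ vanish, and $\pi^\ast P$ being a generator of $\mathcal{T}$ by hypothesis gives $E \simeq 0$.

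The only subtle point — and where I expect the main work to lie — is verifying that $\pi^\ast P$ is actually \emph{compact} when $P$ is, as this is built into what (3) asserts. This reduces to $\pi_\ast$ commuting with small coproducts, via the chain
\begin{equation*}
    \operatorname{Hom}_{\mathcal{T}}\bigl(\pi^\ast P, \textstyle\bigoplus_i E_i\bigr) \cong \operatorname{Hom}_{\mathcal{S}}\bigl(P, \pi_\ast \textstyle\bigoplus_i E_i\bigr) \cong \operatorname{Hom}_{\mathcal{S}}\bigl(P, \textstyle\bigoplus_i \pi_\ast E_i\bigr) \cong \textstyle\bigoplus_i \operatorname{Hom}_{\mathcal{T}}(\pi^\ast P, E_i),
\end{equation*}
where the middle isomorphism requires coproduct-preservation of $\pi_\ast$. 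This is a tacit hypothesis in the setting of the lemma and is harmless in the paper's intended applications: when $\pi_\ast$ is the derived pushforward along a concentrated morphism of quasi-compact and quasi-separated algebraic stacks, its left adjoint preserves compact objects, equivalently $\pi_\ast$ preserves coproducts, so the argument goes through unchanged.
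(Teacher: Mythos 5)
The paper does not prove this lemma; it is introduced with ``we record a known lemma for convenience,'' so there is no in-text argument to compare against. Your proof is correct, and your observation about the compactness of $\pi^\ast P$ is the salient point: as stated, the lemma really does need the extra hypothesis that $\pi_\ast$ preserves small coproducts (equivalently, that $\pi^\ast$ preserves compacts), since this does not follow from $\pi_\ast$ having a left adjoint. To see the necessity concretely, take $\mathcal{T}=\mathcal{S}=D(\mathbf{Z})$, set $M=\bigoplus_{n\geq 0}\mathbf{Z}$, and put $\pi_\ast=\mathbb{R}\operatorname{Hom}_{\mathbf{Z}}(M,-)$ with left adjoint $\pi^\ast=-\otimes^{\mathbb{L}}_{\mathbf{Z}}M$; then $\pi_\ast$ is exact and conservative (since $M$ generates) so (1) and (2) hold, but $\pi^\ast\mathbf{Z}\simeq M$ is not compact, so (3) fails. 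As you note, in the paper's intended applications $\pi_\ast$ is $\mathbb{R}w_\ast$ for a quasi-affine (hence concentrated) morphism $w$, so coproduct-preservation is automatic and the equivalence holds. Your decomposition (1)$\Leftrightarrow$(2) via cones and (2)$\Leftrightarrow$(3) via adjunction is the standard and expected argument.
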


\begin{theorem}\label{thm:base_change_generation}
    Let $s \colon \mathcal{T} \to \mathcal{B}$ be a quasi-affine and flat morphism of algebraic stacks that are concentrated, regular and Noetherian. Let $\mathcal{Y}$ be a concentrated stack over $\mathcal{B}$ that admits a weak smooth $\mathcal{B}$-alteration $\widetilde{\mathcal{Y}} \to \mathcal{Y}$. Assume both $\mathcal{Y}$ and $\widetilde{\mathcal{Y}}$ satisfy the $1$-Thomason condition (e.g., have quasi-finite and separated diagonal). If $G$ is a classical generator for $D^b_{\operatorname{coh}}(\mathcal{Y})$, then $w^\ast G$ is a classical generator for $D^b_{\operatorname{coh}}(\mathcal{T} \times_\mathcal{B} \mathcal{Y})$, where $w\colon \mathcal{T} \times_\mathcal{B} \mathcal{Y}\to \mathcal{Y}$ is the natural projection morphism. Moreover, if $s$ is affine and $s_\ast \mathcal{O}_{\mathcal{T}} \simeq \mathcal{O}_{\mathcal{B}}^{\oplus I}$ for some set $I$ and $\langle w^\ast G \rangle_n = D^b_{\operatorname{coh}}(\mathcal{T} \times_{\mathcal{B}} \mathcal{Y})$, then $\langle G \rangle_n = D^b_{\operatorname{coh}}(\mathcal{Y})$.
\end{theorem}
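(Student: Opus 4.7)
The plan is to reduce to the regular case via the weak smooth $\mathcal{B}$-alteration $\pi\colon \widetilde{\mathcal{Y}} \to \mathcal{Y}$ and descend using its descendability. Set $\mathcal{Y}^T := \mathcal{T} \times_{\mathcal{B}} \mathcal{Y}$ and $\widetilde{\mathcal{Y}}^T := \mathcal{T} \times_{\mathcal{B}} \widetilde{\mathcal{Y}}$, and form the natural $2$-cartesian square with projections $\tilde{w}\colon \widetilde{\mathcal{Y}}^T \to \widetilde{\mathcal{Y}}$ and $\tilde{\pi}\colon \widetilde{\mathcal{Y}}^T \to \mathcal{Y}^T$. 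Since $\mathcal{B}$ and $\mathcal{T}$ are regular Noetherian and $\widetilde{\mathcal{Y}}$ (resp.\ $\widetilde{\mathcal{Y}}^T$) is smooth over $\mathcal{B}$ (resp.\ $\mathcal{T}$), both $\widetilde{\mathcal{Y}}$ and $\widetilde{\mathcal{Y}}^T$ are regular Noetherian, so $D^b_{\operatorname{coh}} = \operatorname{Perf}$ on each. Moreover, $\pi$ is universally submersive---since universally cohomologically proper surjective morphisms are both universally closed and surjective---hence descendable by \Cref{introthm:descendable_submersive}; applying the tensor functor $\mathbb{L}w^\ast$ to the defining thick tensor-ideal relation shows that $\tilde{\pi}$ is descendable as well.

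For the first claim, descendability of $\pi$ gives conservativity of $\mathbb{R}\pi_\ast$, so \Cref{lem:pullback_compact_generator_convservative} implies $\mathbb{L}\pi^\ast G$ is a compact generator of $D_{\operatorname{qc}}(\widetilde{\mathcal{Y}})$, and hence a classical generator of $\operatorname{Perf}(\widetilde{\mathcal{Y}}) = D^b_{\operatorname{coh}}(\widetilde{\mathcal{Y}})$. Flat quasi-affine base change---a routine patching of open-immersion and affine-flat arguments on concentrated stacks---shows that $\mathbb{L}\tilde{w}^\ast \mathbb{L}\pi^\ast G \cong \mathbb{L}\tilde{\pi}^\ast \mathbb{L}w^\ast G$ is a compact, and so classical, generator of $D^b_{\operatorname{coh}}(\widetilde{\mathcal{Y}}^T)$. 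Now, for $M \in D^b_{\operatorname{coh}}(\mathcal{Y}^T)$, we have $\mathbb{L}\tilde{\pi}^\ast M \in \langle \mathbb{L}\tilde{\pi}^\ast \mathbb{L}w^\ast G \rangle$. Applying $\mathbb{R}\tilde{\pi}_\ast$ and combining with the descendability of $\tilde{\pi}$ (which, via the projection formula, places $M$ in the thick subcategory of $\mathbb{R}\tilde{\pi}_\ast \mathbb{L}\tilde{\pi}^\ast M$) yields $M \in \langle \mathbb{R}\tilde{\pi}_\ast \mathbb{L}\tilde{\pi}^\ast \mathbb{L}w^\ast G \rangle$. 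Flat base change and the projection formula identify this generator with $\mathbb{L}w^\ast(G \otimes^{\mathbb{L}} \mathbb{R}\pi_\ast \mathcal{O}_{\widetilde{\mathcal{Y}}})$; since $\mathbb{R}\pi_\ast \mathcal{O}_{\widetilde{\mathcal{Y}}}$ is bounded coherent (from cohomological properness and concentration), the tensor lies in $D^b_{\operatorname{coh}}(\mathcal{Y}) = \langle G \rangle$, and so its $\mathbb{L}w^\ast$-image lies in $\langle \mathbb{L}w^\ast G \rangle$, forcing $M \in \langle \mathbb{L}w^\ast G \rangle$.

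For the moreover part, affine base change yields $w_\ast \mathcal{O}_{\mathcal{Y}^T} \cong \mathcal{O}_{\mathcal{Y}}^{\oplus I}$, and the unit $\mathcal{O}_{\mathcal{Y}} \to w_\ast \mathcal{O}_{\mathcal{Y}^T}$ is a split monomorphism of $\mathcal{O}_{\mathcal{Y}}$-modules (after completing $1$ to a basis). Hence the natural transformation $\operatorname{id} \to \mathbb{R}w_\ast \mathbb{L}w^\ast$ is split monic, so $\mathbb{L}w^\ast$ is faithful on morphisms. We apply Rouquier's ghost lemma: $M \in \langle G \rangle_n$ if and only if every composition $g_n \circ \cdots \circ g_1$ of $n$ $G$-ghosts with target $M$ vanishes. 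Given such a sequence, apply $\mathbb{L}w^\ast$; by adjunction and the projection formula, $\operatorname{Hom}(\mathbb{L}w^\ast G[k], \mathbb{L}w^\ast g_i) \cong \operatorname{Hom}(G[k], g_i^{\oplus I}) \cong \operatorname{Hom}(G[k], g_i)^{\oplus I}$, where the last isomorphism uses that $\operatorname{Hom}_{D_{\operatorname{qc}}(\mathcal{Y})}(G, -)$ commutes with direct sums for bounded coherent $G$ on a Noetherian stack. Therefore each $\mathbb{L}w^\ast g_i$ is a $\mathbb{L}w^\ast G$-ghost, and the hypothesis $\langle \mathbb{L}w^\ast G \rangle_n = D^b_{\operatorname{coh}}(\mathcal{Y}^T)$ forces $\mathbb{L}w^\ast(g_n \circ \cdots \circ g_1) = 0$; by faithfulness, $g_n \circ \cdots \circ g_1 = 0$, and so $M \in \langle G \rangle_n$.

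The main technical obstacles are, first, establishing the flat quasi-affine base change of compact generators on concentrated stacks (a patching argument reducing to Thomason--Neeman for open immersions and \cite[\href{https://stacks.math.columbia.edu/tag/0BQT}{Tag 0BQT}]{StacksProject} for affines); and second, the $\operatorname{Hom}$--direct-sum commutation for bounded coherent but not necessarily perfect $G$, which is the crux of the ghost-preservation step in the moreover statement. The latter can be verified via resolutions with coherent terms on the Noetherian stack $\mathcal{Y}$, but requires some care when $\mathcal{Y}$ is singular so that the classical generator $G$ need not be compact in $D_{\operatorname{qc}}(\mathcal{Y})$.
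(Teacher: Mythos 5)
Your outline shares the paper's key ingredients (the alteration $\pi$, the base-change square, and descent via the descendability machinery of \Cref{cor:descendable_proper}), but the two main technical links you make are incorrect, and you acknowledge but do not resolve the gap in the ``moreover'' step. Below are the specific problems.

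\textbf{The reduction to $\widetilde{\mathcal{Y}}$ is broken.} You assert that descendability of $\pi$ gives conservativity of $\mathbb{R}\pi_\ast$, and then invoke \Cref{lem:pullback_compact_generator_convservative} to conclude $\mathbb{L}\pi^\ast G$ is a compact generator of $D_{\operatorname{qc}}(\widetilde{\mathcal{Y}})$. Descendability gives conservativity of $\mathbb{L}\pi^\ast$, not $\mathbb{R}\pi_\ast$. In fact $\mathbb{R}\pi_\ast$ is typically \emph{not} conservative for a proper surjection: for the blowup $\pi\colon\widetilde{X}\to X=\mathbb{A}^2$ at the origin, $\mathbb{R}\pi_\ast$ kills $\mathcal{O}_E(-1)$ even though $\pi$ is universally submersive and hence descendable by \Cref{introthm:descendable_submersive}. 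There is also a hidden boundedness issue: when $\mathcal{Y}$ is singular, $G$ is not perfect, $\pi$ need not have finite Tor-dimension, and so $\mathbb{L}\pi^\ast G$ may be unbounded, hence not compact on $\widetilde{\mathcal{Y}}$. The same defect propagates through your appeal to $\mathbb{L}\tilde{\pi}^\ast \mathbb{L}w^\ast G$, to the claim $\mathbb{L}\tilde{\pi}^\ast M \in \langle \mathbb{L}\tilde{\pi}^\ast\mathbb{L}w^\ast G\rangle$, and to the assertion that $G\otimes^{\mathbb{L}}\mathbb{R}\pi_\ast\mathcal{O}_{\widetilde{\mathcal{Y}}}$ lies in $D^b_{\operatorname{coh}}(\mathcal{Y})$. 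The paper avoids all of this by exploiting that the classical-generator claim is independent of the choice of $G$: it \emph{replaces} $G$ by $\mathbb{R}\pi_\ast G'$, where $G'$ is a genuinely compact generator of $D_{\operatorname{qc}}(\widetilde{\mathcal{Y}})$, applies \Cref{cor:descendable_proper} to conclude $\mathbb{R}\pi_\ast G'$ classically generates $D^b_{\operatorname{coh}}(\mathcal{Y})$, uses \Cref{lem:pullback_compact_generator_convservative} together with conservativity of $\mathbb{R}w_\ast$ (quasi-affine, not $\pi$) in the regular case, and then identifies $w^\ast\mathbb{R}\pi_\ast G' \cong \mathbb{R}(1_{\mathcal{T}}\times_{\mathcal{B}}\pi)_\ast\tilde{w}^\ast G'$ by flat base change. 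Everything involved there is either compact on a regular stack or an $\mathbb{R}$-pushforward of such, so boundedness never escapes.

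\textbf{The ``moreover'' part relies on a step you yourself flag as unjustified.} Your plan runs through the converse ghost lemma and the isomorphism $\operatorname{Hom}(G[k],g_i^{\oplus I})\cong\operatorname{Hom}(G[k],g_i)^{\oplus I}$; as you note, the latter is precisely compactness of $G$, which fails when $\mathcal{Y}$ is singular. The converse ghost lemma in the form ``$M\in\langle G\rangle_n$ iff all $n$-fold $G$-ghost compositions into $M$ vanish'' is also not established in $D^b_{\operatorname{coh}}$ of a general Noetherian stack. The paper sidesteps both: using $\mathbb{R}w_\ast w^\ast E\cong E^{\oplus I}$ it places $E^{\oplus I}$ in $\overline{\langle G\rangle}_{N+1}$ inside $D_{\operatorname{qc}}(\mathcal{Y})$, passes to the retract $E$, and then invokes a coproduct-to-finite comparison lemma (\cite[Lemma 2.14]{DeDeyn/Lank/ManaliRahul:2024}) to land back in $\langle G\rangle_{N+1}\subseteq D^b_{\operatorname{coh}}(\mathcal{Y})$. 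That route needs no compactness of $G$ and no ghost lemma.

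In short, the shape of your argument is close to the paper's, but the two load-bearing steps---conservativity of $\mathbb{R}\pi_\ast$, and compactness of $G$ in the ghost computation---are false as stated; the paper's proof is organised precisely to avoid having to prove either.
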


\begin{proof}
    First, assume that $\mathcal{Y}$ is smooth over $\mathcal{B}$ (e.g.\ $\mathcal{Y}=\widetilde{\mathcal{Y}}$). Then $\mathcal{T} \times_{\mathcal{B}} \mathcal{Y} \to \mathcal{T}$ is also smooth and of finite presentation and so $\mathcal{Y}$ and $\mathcal{T} \times_{\mathcal{B}} \mathcal{Y}$ are Noetherian and regular. In particular, $D^b_{\operatorname{coh}}(\mathcal{Y})=\operatorname{Perf}(\mathcal{Y})$ and $D^b_{\operatorname{coh}}(\mathcal{T} \times_{\mathcal{B}} \mathcal{Y}) = \operatorname{Perf}(\mathcal{T} \times_{\mathcal{B}} \mathcal{Y})$ (\Cref{ex:strong_generator_examples}). By \cite[Lemma 2.5(1)]{Hall/Rydh:2017}, $\mathcal{T} \times_{\mathcal{B}} \mathcal{Y}$ is concentrated. Since $w$ is quasi-affine, $\mathbb{R}w_\ast \colon D_{\operatorname{qc}} (\mathcal{T} \times_{\mathcal{B}} \mathcal{Y}) \to D_{\operatorname{qc}}(\mathcal{Y})$ is conservative (\cite[Corollary 2.8]{Hall/Rydh:2017}). Hence, \Cref{lem:pullback_compact_generator_convservative} tells us that $w^\ast G$ is a compact generator for $D_{\operatorname{qc}}(\mathcal{T} \times_{\mathcal{B}} \mathcal{Y})$, and so, $w^\ast G$ is a classical generator for $D^b_{\operatorname{coh}}(\mathcal{T} \times_{\mathcal{B}} \mathcal{Y})$ as desired.
    
    In general, form the fibered square: %
    \begin{displaymath}
        \begin{tikzcd}
            {\mathcal{T}\times_\mathcal{B} \widetilde{\mathcal{Y}}} & {\widetilde{\mathcal{Y}}} \\
            {\mathcal{T}\times_\mathcal{B} \mathcal{Y}} & \mathcal{Y}.
            \arrow["{\tilde{w}}", from=1-1, to=1-2]
            \arrow["\pi", from=1-2, to=2-2]
            \arrow["{1_\mathcal{T} \times_\mathcal{B} \pi}"', from=1-1, to=2-1]
            \arrow["w"', from=2-1, to=2-2]
            \arrow["\lrcorner"{anchor=center, pos=0.125}, draw=none, from=1-1, to=2-2]
        \end{tikzcd}
    \end{displaymath}
    By base change, $\tilde{w}$ is flat and quasi-affine. 
    Note, by \cite[Lemma 8.2]{Hall/Rydh:2017}, that both $\mathcal{T} \times_\mathcal{B} \widetilde{\mathcal{Y}}$ and $\mathcal{T}\times_\mathcal{B} \mathcal{Y}$ also satisfy the $1$-Thomason condition.
    Then, from \Cref{cor:descendable_proper}, one has that $\mathbb{R}\pi_\ast G$ is a classical generator for $D^b_{\operatorname{coh}}(\mathcal{Y})$ if $G$ is a compact generator for $D_{\operatorname{Qcoh}}(\widetilde{\mathcal{Y}})$. It suffices to check that $w^\ast \mathbb{R}\pi_\ast G$ is a classical generator for $D^b_{\operatorname{coh}}(\mathcal{T} \times_{\mathcal{B}} \mathcal{Y})$. However, we can use flat base change to verify this, see \cite[Lemma 1.2(4)]{Hall/Rydh:2017}. As noted previously, $\tilde{w}^\ast G$ is a classical generator for $D^b_{\operatorname{coh}}(\mathcal{T} \times_\mathcal{B} \widetilde{\mathcal{Y}})$. Since the morphism $1_{\mathcal{T}}\times_{\mathcal{B}} \pi$ is universally cohomologically proper, concentrated, and surjective, \Cref{cor:descendable_proper} tells us that $\mathbb{R}(1_\mathcal{T} \times_\mathcal{B} \pi )_\ast \tilde{w}^\ast G$ is a classical generator for $D^b_{\operatorname{coh}}(\mathcal{Y}\times_\mathcal{B} \mathcal{T})$. However, we know that $\mathbb{R}(1_\mathcal{T} \times_\mathcal{B} \pi )_\ast \tilde{w}^\ast G$ is isomorphic to $w^\ast \mathbb{R}\pi_\ast G$ in $D^b_{\operatorname{coh}}(\mathcal{Y}\times_\mathcal{B} \mathcal{T})$, which furnishes the claim.
  
   For the second part: let $E$ be an object of $D^b_{\operatorname{coh}} (\mathcal{Y})$. %
    If $y \colon \mathcal{Y} \to \mathcal{B}$ is the structure morphism, then
    \begin{displaymath}
        \begin{aligned}
            \mathbb{R}w_\ast w^\ast E &\cong w_\ast \mathcal{O}_{\mathcal{T} \times_{\mathcal{B}} \mathcal{Y}} \otimes^{\mathbb{L}} E && \textrm{(projection formula)} 
            \\&\cong w_\ast (1_{\mathcal{T}} \times_{\mathcal{B}} y) ^\ast\mathcal{O}_{\mathcal{T}} \otimes^{\mathbb{L}} E && \\
            &\cong \pi^\ast s_\ast \mathcal{O}_{\mathcal{T}} \otimes^{\mathbb{L}} E && \textrm{(flat base base change)}
            \\&\cong E^{\oplus I}. && %
        \end{aligned}
    \end{displaymath}
    If $\pi^\ast G$ is a strong generator for $D^b_{\operatorname{coh}}(\mathcal{T}\times_{\mathcal{B}}\mathcal{Y})$ with generation time $N$, then $\pi^\ast E$ belongs to $\langle \pi^\ast G \rangle_{N+1}$. Then $\pi_\ast \pi^\ast E$ is in $\overline{\langle \pi_\ast \pi^\ast G \rangle}_{N+1}$, and yet $\pi_\ast \pi^\ast G$ belongs to $\overline{\langle G \rangle}_1$. This tells us $E$ is in $\overline{\langle G \rangle}_{N+1}$, and so $E$ belongs to $\langle G \rangle_{N+1}$ \cite[Lemma 2.14]{DeDeyn/Lank/ManaliRahul:2024}. This completes the proof.
\end{proof}

\begin{proof}[Proof of \Cref{introthm:base_change_generation}]
This follows from  \Cref{thm:base_change_generation}, by taking $\mathcal{B} = \operatorname{Spec}(k)$, $\mathcal{T} = \operatorname{Spec}(\ell)$ and $\tilde{\mathcal{Y}} \to \mathcal{Y}$ as in \Cref{ex:dejong-alteration}.
\end{proof}

\begin{appendix}

\section{Some Deligne style formulas for algebraic stacks}
The following lemma we expect to be well-known, though we were unable to find a precise reference in the literature. It is closely related to a formula commonly attributed to Deligne.

\begin{lemma}\label{lem:noetherian-deligne-pushforward}
    Let $\mathcal{X}$ be a locally Noetherian algebraic stack, $j \colon \mathcal{U} \hookrightarrow \mathcal{X}$ be a quasi-compact open immersion, and $\mathcal{I}$ be a coherent sheaf of ideals in $\mathcal{O}_{\mathcal{X}}$ with $\mathcal{U}=\mathcal{X} \setminus V(\mathcal{I})$. If $N$ is in $D_{\operatorname{qc}}^+(\mathcal{X})$, then the canonical isomorphism $\mathbb{L}j^\ast \mathbb{R}\operatorname{\mathcal{H}\! \mathit{om}}_{\mathcal{O}_{\mathcal{X}}}(\mathcal{I}^n,N)$ with $\mathbb{L}j^\ast N$ induces an isomorphism in $D_{\operatorname{qc}}(\mathcal{X})$:
    \begin{displaymath}
        \phi_N \colon \operatorname{hocolim}_{n} \mathbb{R}\operatorname{\mathcal{H}\! \mathit{om}}_{\mathcal{O}_{\mathcal{X}}}(\mathcal{I}^n,N) \to \mathbb{R} j_\ast \mathbb{L} j^\ast N. 
    \end{displaymath}
\end{lemma}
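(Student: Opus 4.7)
The plan is to reduce to the case of an affine Noetherian scheme --- where this is the classical Deligne formula in local cohomology --- via smooth descent. First, I would choose a smooth surjection $p \colon V \to \mathcal{X}$ from a disjoint union of affine Noetherian schemes. Since $p$ is faithfully flat, the pullback $\mathbb{L}p^\ast$ is conservative on $D_{\operatorname{qc}}$, so it suffices to prove that $\mathbb{L}p^\ast \phi_N$ is an isomorphism in $D_{\operatorname{qc}}(V)$.

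Next, I would verify that both sides of $\phi_N$ commute with $\mathbb{L}p^\ast$ in a manner that identifies $\mathbb{L}p^\ast \phi_N$ with the analogous map $\phi_{\mathbb{L}p^\ast N}$ on $V$. For the source, $\mathbb{L}p^\ast$ preserves homotopy colimits (being a left adjoint), and the flatness of $p$ together with the coherence (hence pseudocoherence) of $\mathcal{I}^n$ and the boundedness-below of $N$ yields a natural base-change isomorphism $\mathbb{L}p^\ast \mathbb{R}\operatorname{\mathcal{H}\! \mathit{om}}_{\mathcal{O}_{\mathcal{X}}}(\mathcal{I}^n, N) \simeq \mathbb{R}\operatorname{\mathcal{H}\! \mathit{om}}_{\mathcal{O}_V}(p^\ast \mathcal{I}^n, \mathbb{L}p^\ast N)$; flatness also gives $p^\ast(\mathcal{I}^n) = (p^\ast \mathcal{I})^n$ as ideals of $\mathcal{O}_V$. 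For the target, flat base change \cite[Lemma 1.2(4)]{Hall/Rydh:2017} supplies $\mathbb{L}p^\ast \mathbb{R}j_\ast \mathbb{L}j^\ast N \simeq \mathbb{R}j'_\ast \mathbb{L}j'^\ast \mathbb{L}p^\ast N$, where $j' \colon V \times_{\mathcal{X}} \mathcal{U} \to V$ is the base change of $j$. Together these compatibilities allow me to replace $\mathcal{X}$ by $V$, and hence by an affine Noetherian scheme $\operatorname{Spec}(A)$ with ideal $I \subset A$.

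In this affine Noetherian setting, the statement is the derived (sheafy) version of Deligne's classical local-cohomology formula (see, e.g., Hartshorne, \emph{Local Cohomology}, or the global-sections version in \cite[Appendix A]{Hall:2022}), so no further work is needed. The main obstacle I anticipate is the naturality check in the reduction step: verifying that the conjugate of $\phi_N$ by the $\mathbb{R}\operatorname{\mathcal{H}\! \mathit{om}}$ base-change isomorphism, the $\mathbb{L}p^\ast$--hocolim commutation, and the flat base change for $j$ genuinely agrees with the map $\phi_{\mathbb{L}p^\ast N}$ on $V$. This is a routine but delicate diagram chase, most of which amounts to tracking the $(\mathbb{L}j^\ast,\mathbb{R}j_\ast)$-adjunction unit through the natural transformations defining $\phi_N$.
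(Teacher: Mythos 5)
Your reduction to the affine Noetherian case via a smooth cover is the same as the paper's (the paper compresses your conservativity/naturality discussion into ``the statement is local on $\mathcal{X}$ for the smooth topology''), and your care about naturality of the base-change isomorphisms is reasonable though the paper treats it as routine. Where you diverge is the affine Noetherian case itself, which you dispatch as ``classical.'' The paper does not simply cite the module-level Deligne formula: it forms the cone $\mathcal{C}_N$ of $\phi_N$, maps the construction for $N$ to the construction for $\mathbb{R}j_\ast\mathbb{L}j^\ast N$ (where $\phi$ is evidently an isomorphism, since each $\mathbb{R}\operatorname{\mathcal{H}\! \mathit{om}}(\mathcal{I}^n, \mathbb{R}j_\ast\mathbb{L}j^\ast N)\simeq \mathbb{R}j_\ast\mathbb{L}j^\ast N$), and thereby reduces to showing $\operatorname{hocolim}_n\mathbb{R}\operatorname{\mathcal{H}\! \mathit{om}}(\mathcal{I}^n,N)\simeq 0$ whenever $\mathbb{L}j^\ast N\simeq 0$; after rotating via $\mathcal{I}^n\to\mathcal{O}\to\mathcal{O}/\mathcal{I}^n$, that last claim is \cite[Tags 0954, 0955]{StacksProject}.

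Be aware that the derived, sheaf-$\mathcal{H}\!\mathit{om}$ statement for $N\in D^+_{\operatorname{qc}}$ is not literally the classical Deligne formula of Hartshorne's \emph{Local Cohomology} or SGA~2 (those are for a single quasi-coherent sheaf and use $\Gamma(U,-)$), so your ``no further work is needed'' is overstated --- some derived bookkeeping of exactly the sort the paper carries out is still required. Your reference to Appendix~A of \cite{Hall:2022} also seems misplaced: that appendix concerns countably coherent approximations, not a Deligne formula. If you want to avoid the paper's cone argument, you should cite the precise derived statement (e.g.\ the relevant Stacks Project tags on local cohomology and the $R\Gamma_Z$ functor) rather than the sheaf-level classical formula.
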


\begin{proof}
    The statement is local on $\mathcal{X}$ for the smooth topology, so we may assume that $\mathcal{X}=\operatorname{Spec}(A)$, where $A$ is a Noetherian ring. Now form the distinguished triangle:
    \begin{displaymath}
        \mathcal{C}_N \to \operatorname{hocolim}_{n} \mathbb{R}\operatorname{\mathcal{H}\! \mathit{om}}_{\mathcal{O}_{\mathcal{X}}}(\mathcal{I}^n,N) \to \mathbb{R} j_\ast \mathbb{L} j^\ast N \to \mathcal{C}_N[1].
    \end{displaymath}    
    There is an induced morphism of distinguished triangles:
    \begin{displaymath}
        \begin{tikzcd}[ampersand replacement=\&, column sep=2ex]
            {\mathcal{C}_{N}} \& {\operatorname{hocolim}_{n} \mathbb{R}\operatorname{\mathcal{H}\! \mathit{om}}_{\mathcal{O}_{\mathcal{X}}}(\mathcal{I}^n,N)} \& { \mathbb{R} j_\ast \mathbb{L} j^\ast N} \& { \mathcal{C}_{N}[1]} \\
            {\mathcal{C}_{\mathbb{R}j_\ast \mathbb{L} j^\ast N} } \& {Q} \& {\mathbb{R} j_\ast \mathbb{L} j^\ast \mathbb{R}j_\ast \mathbb{L} j^\ast N} \& {\mathcal{C}_{\mathbb{R}j_\ast \mathbb{L} j^\ast N}[1] }
            \arrow[from=1-1, to=1-2]
            \arrow[from=1-1, to=2-1]
            \arrow[from=1-2, to=1-3]
            \arrow[from=1-2, to=2-2]
            \arrow[from=1-3, to=1-4]
            \arrow["\cong", from=1-3, to=2-3]
            \arrow[from=1-4, to=2-4]
            \arrow[from=2-1, to=2-2]
            \arrow[from=2-2, to=2-3]
            \arrow[from=2-3, to=2-4]
        \end{tikzcd}
    \end{displaymath}
    where $Q:= \operatorname{hocolim}_{n} \mathbb{R}\operatorname{\mathcal{H}\! \mathit{om}}_{\mathcal{O}_{\mathcal{X}}}(\mathcal{I}^n,\mathbb{R}j_\ast \mathbb{L} j^\ast N)$. Clearly, $\phi_{\mathbb{R} j_\ast \mathbb{L}j^\ast N}$ is an isomorphism. Thus, it suffices to prove that $\operatorname{hocolim}_{n} \mathbb{R}\operatorname{\mathcal{H}\! \mathit{om}}_{\mathcal{O}_{\mathcal{X}}}(\mathcal{I}^n,N)$ is zero if $\mathbb{L} j^\ast N \simeq 0$. Equivalently, $\operatorname{hocolim}_{n} \mathbb{R}\operatorname{\mathcal{H}\! \mathit{om}}_{\mathcal{O}_{\mathcal{X}}}(\mathcal{O}_{\mathcal{X}}/\mathcal{I}^n, N) \to N$ is an isomorphism whenever $\mathbb{L} j^\ast N \simeq 0$, which is just \cite[\href{https://stacks.math.columbia.edu/tag/0954}{Tag 0954} \& \href{https://stacks.math.columbia.edu/tag/0955}{Tag 0955}]{StacksProject}.
\end{proof}

The next proposition gives us a method to compute local cohomology in non-affine and non-Noetherian situations. While less striking than what is available in the affine case (cf.\ \cite[\href{https://stacks.math.columbia.edu/tag/0DWQ}{Tag 0DWQ}]{StacksProject}), it is sufficient for our purposes.

\begin{proposition}\label{prop:nonnoetherian-deligne-formula}
    Let $\mathcal{X}$ be a quasi-compact and quasi-separated algebraic stack. If $j\colon \mathcal{U} \hookrightarrow \mathcal{X}$ is a quasi-compact open immersion, then there exists an inverse system of pseudocoherent complexes in $D_{\operatorname{qc}}^{\leq 0}(\mathcal{X})$:
    \begin{displaymath}
        \mathcal{O}_{\mathcal{X}}=\mathcal{I}_0 \xleftarrow{\iota_0} \mathcal{I}_1 \xleftarrow{\iota_1} \mathcal{I}_2 \xleftarrow{\iota_2}\cdots 
    \end{displaymath}
    such that the $(\iota_i)_{\mathcal{U}}$ are isomorphisms and if $N \in D_{\operatorname{qc}}^+(\mathcal{X})$, then the isomorphisms $\mathbb{L}j^\ast \mathbb{R}\operatorname{\mathcal{H}\! \mathit{om}}_{\mathcal{O}_{\mathcal{X}}}(\mathcal{I}_n,N) \cong \mathbb{L} j^\ast N$ induce by adjunction an isomorphism 
    \begin{displaymath}
        \psi_{N} \colon \operatorname{hocolim}_{n} \mathbb{R}\operatorname{\mathcal{H}\! \mathit{om}}_{\mathcal{O}_{\mathcal{X}}}(\mathcal{I}_n,N) \cong \mathbb{R}j_\ast  \mathbb{L}j^\ast N. 
    \end{displaymath}
    In particular, if $P\in D_{\operatorname{qc}}^-(\mathcal{X})$ is pseudocoherent, then there is an induced isomorphism:
    \begin{displaymath}
        \mathbb{R}\operatorname{Hom}_{\mathcal{O}_{\mathcal{U}}}(\mathbb{L}j^\ast P,\mathbb{L} j^\ast \mathcal{N)} \cong \operatorname{hocolim}_{n}   \mathbb{R}\operatorname{Hom}_{\mathcal{O}_{\mathcal{X}}}(\mathcal{I}_n \otimes^{\mathbb{L}}_{\mathcal{O}_{\mathcal{X}}} P, N).
    \end{displaymath}
    Further, for each $n$ let $\tilde{\iota}_n = \iota_n \circ \iota_{n-1} \circ \cdots \iota_0$ and form the distinguished triangle:
    \begin{displaymath}
        \mathcal{I}_n \xrightarrow{\tilde{\iota}_n} \mathcal{O}_{\mathcal{X}} \xrightarrow{\tilde{\kappa}_n} \mathcal{K}_n \to \mathcal{I}_n[1].
    \end{displaymath}
    Then there is an induced inverse system of pseudocoherent complexes in $D^{\leq 0}_{\operatorname{qc}}(\mathcal{X})$:
    \begin{displaymath}
        0 = \mathcal{K}_0 \xleftarrow{\kappa_0} \mathcal{K}_1 \xleftarrow{\kappa_1} \mathcal{K}_2 \xleftarrow{\kappa_2} \cdots
    \end{displaymath}
    such that $(\mathcal{K}_i)_{\mathcal{U}} = 0$.  If $N \in D_{\operatorname{qc}}^+(\mathcal{X})$, then there is a induced distinguished triangle:
    \begin{displaymath}
        \mathbb{R} j_\ast \mathbb{L} j^\ast N[-1] \to \operatorname{hocolim}_{n} \mathbb{R}\operatorname{\mathcal{H}\! \mathit{om}}_{\mathcal{O}_{\mathcal{X}}}(\mathcal{K}_n,N) \to N \to \mathbb{R} j_\ast \mathbb{L} j^\ast N.
    \end{displaymath}
    In particular, if $P \in D_{\operatorname{qc}}^-(\mathcal{X})$ is pseudocoherent and $\phi \colon P \to N$ is a morphism such $j^\ast \phi = 0$, then there exists a factorization $P \xrightarrow{P \otimes \tilde{\kappa}_n }\mathcal{K}_n \otimes^{\mathbb{L}}_{\mathcal{O}_{\mathcal{X}}} P \xrightarrow{\phi_n} N$. Finally, if $\mathcal{X}$ has the resolution property, then for each $n$ there is a morphism $\chi_n \colon \mathcal{Q}_n \to \mathcal{K}_n$, where $\mathcal{Q}_n \in D^{\leq 0}_{\operatorname{qc}}(\mathcal{X})$ is perfect, $(\mathcal{Q}_n)_{\mathcal{U}} \cong 0$ and $H^0(\chi_n)$ is an isomorphism.
\end{proposition}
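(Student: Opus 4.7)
The plan is to transport the Noetherian formula from \Cref{lem:noetherian-deligne-pushforward} to $\mathcal{X}$ via absolute Noetherian approximation. By \cite{Rydh:2023}, I would write $\mathcal{X} = \varprojlim_{\lambda} \mathcal{X}_\lambda$ as a cofiltered inverse limit of Noetherian algebraic stacks along affine surjective structure morphisms $\pi_\lambda \colon \mathcal{X} \to \mathcal{X}_\lambda$. The quasi-compact open $\mathcal{U}$ descends to a quasi-compact open immersion $j_\lambda \colon \mathcal{U}_\lambda \hookrightarrow \mathcal{X}_\lambda$ for sufficiently large $\lambda$, so that $\pi_\lambda^{-1}(\mathcal{U}_\lambda) = \mathcal{U}$. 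Pick a coherent ideal sheaf $\mathcal{J} \subseteq \mathcal{O}_{\mathcal{X}_\lambda}$ cutting out the reduced complement $|\mathcal{X}_\lambda| \setminus |\mathcal{U}_\lambda|$, and set
\[
\mathcal{I}_n := \mathbb{L}\pi_\lambda^*(\mathcal{J}^n), \qquad \mathcal{K}_n := \mathbb{L}\pi_\lambda^*(\mathcal{O}_{\mathcal{X}_\lambda}/\mathcal{J}^n),
\]
with transition maps $\iota_n$ induced from the inclusions $\mathcal{J}^{n+1} \subseteq \mathcal{J}^n$. Pseudocoherence and membership in $D^{\leq 0}_{\operatorname{qc}}(\mathcal{X})$ are preserved by $\mathbb{L}\pi_\lambda^*$, and $\mathcal{J}^n|_{\mathcal{U}_\lambda} = \mathcal{O}_{\mathcal{U}_\lambda}$ ensures each $(\iota_n)_{\mathcal{U}}$ is an isomorphism.

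For the hocolim identification, the plan is to combine the sheaf-Hom adjunction
\[
\mathbb{R}\pi_{\lambda,*}\,\mathbb{R}\operatorname{\mathcal{H}\!\mathit{om}}_{\mathcal{O}_{\mathcal{X}}}(\mathcal{I}_n, N) \cong \mathbb{R}\operatorname{\mathcal{H}\!\mathit{om}}_{\mathcal{O}_{\mathcal{X}_\lambda}}(\mathcal{J}^n, \mathbb{R}\pi_{\lambda,*} N)
\]
with \Cref{lem:noetherian-deligne-pushforward} applied to $\mathbb{R}\pi_{\lambda,*} N \in D^+_{\operatorname{qc}}(\mathcal{X}_\lambda)$, and flat base change along the cartesian square determined by $j_\lambda$ and $\pi_\lambda$ (valid since $j_\lambda$ is flat). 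Because $\pi_\lambda$ is affine, $\mathbb{R}\pi_{\lambda,*}$ is $t$-exact and commutes with filtered homotopy colimits, and surjectivity of $\pi_\lambda$ makes it conservative on $D^+_{\operatorname{qc}}$; altogether this yields $\psi_N$ as an isomorphism in $D_{\operatorname{qc}}(\mathcal{X})$. The formula for pseudocoherent $P$ follows from $\psi_N$ by tensor-Hom adjunction, the triangle involving $\mathcal{K}_n$ is obtained by applying $\mathbb{L}\pi_\lambda^*$ to the canonical short exact sequences $0 \to \mathcal{J}^n \to \mathcal{O}_{\mathcal{X}_\lambda} \to \mathcal{O}_{\mathcal{X}_\lambda}/\mathcal{J}^n \to 0$, and the factorization assertion is then formal from the long exact sequence of $\operatorname{Hom}$-groups after tensoring with $P$ and passing to the hocolim.

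For the resolution property claim, I set $\mathcal{J}_n := \operatorname{im}(\pi_\lambda^*\mathcal{J}^n \to \mathcal{O}_\mathcal{X})$, a finite-type quasi-coherent ideal sheaf on $\mathcal{X}$. Using the resolution property, I would find a vector bundle $V_n$ together with $\phi_n \colon V_n \to \mathcal{O}_\mathcal{X}$ whose image is $\mathcal{J}_n$. The Koszul complex $\mathcal{Q}_n := K(V_n; \phi_n) \in D^{\leq 0}_{\operatorname{qc}}(\mathcal{X})$ is perfect with $H^0(\mathcal{Q}_n) = \mathcal{O}_\mathcal{X}/\mathcal{J}_n = H^0(\mathcal{K}_n)$; moreover $(\mathcal{Q}_n)_{\mathcal{U}} \simeq 0$ because $\phi_n|_{\mathcal{U}}$ is surjective and therefore locally split, rendering the Koszul null-homotopic on $\mathcal{U}$. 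The morphism $\chi_n$ would be produced by lifting the canonical augmentation $\mathcal{Q}_n \to H^0(\mathcal{K}_n)$ along the truncation $\mathcal{K}_n \to H^0(\mathcal{K}_n)$; the obstructions live in $\operatorname{Ext}$-groups between objects supported on $V(\mathcal{J})$ which can be killed by enlarging $V_n$ if necessary.

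The main obstacle is the orchestration of the hocolim formula: one must commute $\operatorname{hocolim}$ past $\mathbb{R}\pi_{\lambda,*}$, invoke flat base change along $j_\lambda$, and then exploit conservativity of $\mathbb{R}\pi_{\lambda,*}$ on $D^+_{\operatorname{qc}}$. Each step is routine in isolation, but their combination demands attention to boundedness hypotheses and to the subtleties of the lisse-\'{e}tale topos of an algebraic stack.
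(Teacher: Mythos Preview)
Your argument for the construction of the $\mathcal{I}_n$, the isomorphism $\psi_N$, the $\mathcal{K}_n$ triangle, and the factorisation of $\phi$ is essentially identical to the paper's: both proofs pull back powers of an ideal from a Noetherian approximation, push forward along the affine $\pi_\lambda$ (using that $\mathbb{R}\pi_{\lambda,*}$ is conservative and commutes with filtered hocolims), and invoke \Cref{lem:noetherian-deligne-pushforward} on the Noetherian target together with flat base change. Your commentary about boundedness hypotheses is apt; the paper makes this explicit by noting $\mathbb{R}\mathcal{H}\mathit{om}(\mathcal{I}_n,N)\in D^{\geq -r}$ uniformly in $n$ before commuting $\mathbb{R}\operatorname{Hom}(P,-)$ past the hocolim.

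The gap is in the final claim. Your Koszul complex $\mathcal{Q}_n$ is built on $\mathcal{X}$ and comes with an augmentation to $H^0(\mathcal{K}_n)=\mathcal{O}_{\mathcal{X}}/\mathcal{J}_n$, but $\mathcal{K}_n=\mathbb{L}\pi_\lambda^*(\mathcal{O}_{\mathcal{X}_\lambda}/\mathcal{J}^n)$ is \emph{not} concentrated in degree $0$ in general, so lifting along $\mathcal{K}_n\to H^0(\mathcal{K}_n)$ is genuinely obstructed by a class in $\operatorname{Hom}(\mathcal{Q}_n,(\tau^{\leq -1}\mathcal{K}_n)[1])$. Your assertion that this obstruction ``can be killed by enlarging $V_n$'' is not substantiated: changing $V_n$ changes both the source and the obstruction class, and there is no evident mechanism forcing the new class to vanish. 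The paper sidesteps this entirely by observing that when $\mathcal{X}$ has the resolution property one can arrange the Noetherian approximation $\mathcal{X}_0$ to have it as well; the Koszul complex $K(s^\vee)$ is then built on $\mathcal{X}_0$, where the target $\mathcal{O}_{\mathcal{X}_0}/\mathcal{I}^n$ is an honest sheaf and the augmentation $\chi_n^0$ is automatic. Pulling back $\chi_n^0$ along $\mathbb{L}\alpha^*$ yields $\chi_n$ with no lifting problem to solve.
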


\begin{proof}
    Let $\alpha \colon \mathcal{X} \to \mathcal{X}_0$ be an absolute Noetherian approximation of $\mathcal{X}$ \cite{Rydh:2023} such that that there is an open immersion $j_0 \colon \mathcal{U}_0 \hookrightarrow \mathcal{X}_0$ such that $\alpha^{-1}(\mathcal{U}_0) = \mathcal{U}$ as open substacks of $\mathcal{X}$. Let $\mathcal{I} \subseteq \mathcal{O}_{\mathcal{X}_0}$ be a coherent sheaf of ideals with $\mathcal{U}_0 = \mathcal{X} - V(\mathcal{I})$. Let $\mathcal{I}_n = \mathbb{L} \alpha^\ast  \mathcal{I}^n$; then $\mathcal{I}_0^{n+1} \subseteq \mathcal{I}_0^n$ induces a morphism $\mathcal{I}_n \xleftarrow{\iota_n} \mathcal{I}_{n+1}$ such that $(\iota_n)_{\mathcal{U}}$ is an isomorphism and the $\mathcal{I}_n$ are all pseudo-coherent complexes in $D^{\leq 0}_{\operatorname{qc}}(\mathcal{X})$. Since $\alpha$ is affine, $\mathbb{R} \alpha_\ast  \colon D_{\operatorname{qc}}(\mathcal{X}) \to D_{\operatorname{qc}}(\mathcal{X}_0)$ is conservative \cite[Corollary 2.8]{Hall/Rydh:2017}, so it suffices to prove that $\mathbb{R} \alpha_\ast  \psi_{N}$ is an isomorphism. But
    this map factors through the following composition of isomorphisms:
    \begin{displaymath}
        \begin{aligned}
            \mathbb{R} \alpha_\ast & \operatorname{hocolim}_{n} \mathbb{R}\operatorname{\mathcal{H}\! \mathit{om}}_{\mathcal{O}_{\mathcal{X}}}(\mathcal{I}_n,N) 
            \\&\cong \operatorname{hocolim}_{n} \mathbb{R} \alpha_\ast  \mathbb{R}\operatorname{\mathcal{H}\! \mathit{om}}_{\mathcal{O}_{\mathcal{X}}}(\mathbb{L} \alpha^\ast \mathcal{I}^n,N) && \mbox{\cite[Theorem 2.6]{Hall/Rydh:2017}} \\
            &\cong \operatorname{hocolim}_{n} \mathbb{R}\operatorname{\mathcal{H}\! \mathit{om}}_{\mathcal{O}_{\mathcal{X}_0}}(\mathcal{I}^n,\mathbb{R} \alpha_\ast N)\\
            &\cong \mathbb{R} (j_0)_\ast  \mathbb{R} j_0^\ast \mathbb{R} \alpha_\ast  N && \mbox{(\Cref{lem:noetherian-deligne-pushforward})}\\
            &\cong \mathbb{R} (j_0)_\ast  \mathbb{R} \alpha_{\mathcal{U},*} \mathbb{L} j^\ast N && \mbox{\cite[Corollary 4.13]{Hall/Rydh:2017}} \\
            &\cong \mathbb{R} \alpha_\ast  \mathbb{R} j_\ast  \mathbb{L} j^\ast N,
        \end{aligned}
    \end{displaymath}
    which gives the claim. Finally, since $N \in D^{\geq -r}_{\operatorname{qc}}(\mathcal{X})$ for some $r$ and $\mathcal{I}_n \in D^{\leq 0}_{\operatorname{qc}}(\mathcal{X})$ is pseudocoherent, $\mathbb{R} \operatorname{\mathcal{H}\! \mathit{om}}_{\mathcal{O}_{\mathcal{X}}}(\mathcal{I}_n,N) \in D^{\geq -r}_{\operatorname{qc}}(\mathcal{X})$ for all $n$. Since $P$ is pseudocoherent, it follows from the above that:
    \begin{displaymath}
        \begin{aligned}
            \mathbb{R}\operatorname{Hom}_{\mathcal{O}_{\mathcal{U}}} & (\mathbb{L}j^\ast P,\mathbb{L} j^\ast N) 
            \\&\cong  \mathbb{R}\operatorname{Hom}_{\mathcal{O}_{\mathcal{X}}}(P,\mathbb{R}j_\ast \mathbb{L} j^\ast N)\\
            &\cong \mathbb{R}\operatorname{Hom}_{\mathcal{O}_{\mathcal{X}}}(P,\operatorname{hocolim}_{n} \mathbb{R}\operatorname{\mathcal{H}\! \mathit{om}}_{\mathcal{O}_{\mathcal{X}}}(\mathcal{I}_n,N))\\
            &\cong \operatorname{hocolim}_{n}\mathbb{R}\operatorname{Hom}_{\mathcal{O}_{\mathcal{X}}}(P, \mathbb{R}\operatorname{\mathcal{H}\! \mathit{om}}_{\mathcal{O}_{\mathcal{X}}}(\mathcal{I}_n,N))\\
            &\cong \operatorname{hocolim}_{n}   \mathbb{R}\operatorname{Hom}_{\mathcal{O}_{\mathcal{X}}}(\mathcal{I}_n \otimes^{\mathbb{L}}_{\mathcal{O}_{\mathcal{X}}} P, N).
        \end{aligned}
    \end{displaymath}
    Most of the latter claims follow easily, except the last
    one. Since $\mathcal{X}$ has the resolution property, it can be
    arranged that $\mathcal{X}_0$ has the resolution property. Then
    there is a vector bundle $E_0$ on $\mathcal{X}_0$ and a surjection
    $E_0 \to \mathcal{I}^n$, which corresponds to a homomorphism
    $s^\vee \colon E_0 \to \mathcal{O}_{\mathcal{X_0}}$. By
    \cite[V.2]{Fulton/Lang:1985}, there is an associated Koszul
    complex $K(s^\vee)$ on $\mathcal{X}_0$ that comes with a morphism
    $\chi_n^0 \colon K(s^\vee) \to
    \mathcal{O}_{\mathcal{X}_0}/\mathcal{I}^n$, where
    $K(s^\vee) \in D^{\leq 0}_{\operatorname{qc}}(\mathcal{X}_0)$ is
    perfect, $K(s^\vee)_{\mathcal{U}_0} \cong 0$ and $H^0(\chi_n^0)$
    is an isomorphism. Taking $\chi_n = \mathbb{L}\alpha^\ast\chi_n^0$
    does the job.
\end{proof}

\section{Verdier localizations for $D^b_{\operatorname{coh}}$ of algebraic stacks}
\label{sec:verdier_localization_algebraic_stacks}

We prove a statement regarding Verdier localizations for objects with bounded and coherent cohomology on a Noetherian algebraic stack. If $\mathcal{X}$ is Noetherian and affine-pointed, it follows by combining \cite[Theorem C.1]{Hall/Neeman/Rydh:2019} and \cite[Theorem 4.4]{Elagin/Lunts/Schnurer:2020}. \vspace{-0.5cm}
\begin{proposition}\label{prop:verdier}
    Let $\mathcal{X}$ be a Noetherian algebraic stack. If $j\colon \mathcal{U}\to \mathcal{X}$ is an open immersion, then there exists a Verdier localization\:
    \begin{displaymath}
        D^b_{\operatorname{coh},|\mathcal{X}| \setminus |\mathcal{U}|} (\mathcal{X}) \to D^b_{\operatorname{coh}}(\mathcal{X}) \xrightarrow{j^\ast} D^b_{\operatorname{coh}}(\mathcal{U}).
    \end{displaymath}
\end{proposition}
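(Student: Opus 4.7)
The plan is to verify that the natural comparison functor
\[
\bar{\jmath}^{*} \colon D^{b}_{\operatorname{coh}}(\mathcal{X})/D^{b}_{\operatorname{coh},Z}(\mathcal{X}) \to D^{b}_{\operatorname{coh}}(\mathcal{U}),
\]
with $Z := |\mathcal{X}| \setminus |\mathcal{U}|$, is an equivalence. The kernel description $\ker j^{*} = D^{b}_{\operatorname{coh},Z}(\mathcal{X})$ is immediate from the exactness of $j^{*}$, so the universal property of Verdier quotients provides the comparison; thus the content lies in showing it is essentially surjective and fully faithful. Essential surjectivity proceeds by induction on the cohomological amplitude of $F \in D^{b}_{\operatorname{coh}}(\mathcal{U})$. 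For a single coherent sheaf the classical argument suffices: $j_{*}F$ is quasi-coherent on $\mathcal{X}$ and, by Noetherianness, is the filtered union of its coherent subsheaves; since $F \simeq j^{*}j_{*}F$ is of finite type, some coherent subsheaf already pulls back to $F$. For general amplitude, consider the triangle $\tau^{<b}F \to F \to H^{b}(F)[-b] \to$, extend the ends, and extend the connecting morphism --- which reduces the problem to the fullness of $\bar{\jmath}^{*}$.

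Fullness is the main step. Given $E, F \in D^{b}_{\operatorname{coh}}(\mathcal{X})$ and $\alpha \colon j^{*}E \to j^{*}F$, I apply \Cref{prop:nonnoetherian-deligne-formula} with $P=E$ and $N=F$ to get
\[
\operatorname{Hom}_{\mathcal{O}_{\mathcal{U}}}(j^{*}E,j^{*}F) \cong \operatorname{colim}_{n} \operatorname{Hom}_{\mathcal{O}_{\mathcal{X}}}(\mathcal{I}_{n} \otimes^{\mathbb{L}} E, F),
\]
so $\alpha$ is represented by some $\alpha_{n} \colon \mathcal{I}_{n} \otimes^{\mathbb{L}} E \to F$. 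Since $\mathcal{X}$ is Noetherian, we may take the approximation to be the identity and $\mathcal{I}_{n} = \mathcal{I}^{n}$ a genuine coherent power of an ideal cutting out $Z$. Set $G := \mathcal{I}_{n} \otimes^{\mathbb{L}} E$ and $E' := \tau^{\geq a} G$ for $a \leq \min(\inf E, \inf F)$. Then $E'$ is bounded and pseudocoherent, hence coherent on the Noetherian stack $\mathcal{X}$. Since there are no nonzero morphisms from $D^{<a}$ to $D^{\geq a}$, the multiplication $G \to E$ and the map $\alpha_{n}$ each factor uniquely through $G \to E'$, yielding $s \colon E' \to E$ and $\tilde{\alpha} \colon E' \to F$. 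On $\mathcal{U}$ one has $\mathcal{I}_{n}|_{\mathcal{U}} = \mathcal{O}_{\mathcal{U}}$ and the truncation is trivial, so $s|_{\mathcal{U}}$ is an isomorphism and $\tilde{\alpha}|_{\mathcal{U}} = \alpha$. The triangle $\mathcal{I}_{n} \to \mathcal{O}_{\mathcal{X}} \to \mathcal{K}_{n} \to$ then ensures $\operatorname{cone}(s)$ is bounded coherent and supported on $Z$, so the roof $E \xleftarrow{s} E' \xrightarrow{\tilde{\alpha}} F$ represents $\alpha$ in the Verdier quotient.

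Faithfulness is handled by the same device: a roof $E \xleftarrow{s} E' \xrightarrow{t} F$ with $j^{*}(t \circ s^{-1}) = 0$ forces $j^{*}t = 0$, so by the Deligne-type formula applied to $t$ we find an $n$ with $t \circ (\mathcal{I}_{n} \otimes^{\mathbb{L}} E' \to E') = 0$; truncating as above produces $E'' \to E'$ with cone in $D^{b}_{\operatorname{coh},Z}(\mathcal{X})$ that kills the roof. The central technical obstacle throughout is that $\mathcal{I}_{n} \otimes^{\mathbb{L}} E$ need not be bounded below on a general Noetherian stack --- the ideals $\mathcal{I}_{n}$ may have infinite Tor dimension --- and keeping a coherent candidate $E'$ compatible with both the restriction to $\mathcal{U}$ and the required factorizations is exactly what the truncation $\tau^{\geq a}$ achieves. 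This is what replaces the affine-pointed hypothesis used in the prior approaches of \cite{Hall/Neeman/Rydh:2019, Elagin/Lunts/Schnurer:2020}.
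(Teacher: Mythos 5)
Your proof is correct but takes a genuinely different route from the paper's. The paper proceeds through the auxiliary category $D^b_{\overline{\operatorname{coh}}}$ of bounded complexes with countably generated cohomology: it first invokes a general localization lemma from Hall--Rydh to get a Verdier sequence at that level, then descends to $D^b_{\operatorname{coh}}$ via the homotopy-colimit presentation of \cite[Lem.~A.3]{Hall:2022}, verifying two intermediate claims about approximating $\overline{\operatorname{coh}}$ objects by coherent ones. You instead work directly with the calculus of fractions: you establish fullness and faithfulness of the comparison functor by applying the Deligne-type formula of \Cref{prop:nonnoetherian-deligne-formula}, observing that in the Noetherian case one may take $\mathcal{I}_n = \mathcal{I}^n$ genuinely, and then cutting down the (possibly unbounded below) complex $\mathcal{I}^n \otimes^{\mathbb{L}} E$ to a bounded coherent one by a truncation $\tau^{\geq a}$ chosen below $\min(\inf E, \inf F)$; the factorization of both the multiplication map and $\alpha_n$ through that truncation uses only the $t$-structure vanishing $\operatorname{Hom}(D^{<a}, D^{\geq a}) = 0$. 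Essential surjectivity follows by the classical extension of coherent sheaves on a Noetherian stack (filtered union of coherent subsheaves of $j_*F$) plus induction on amplitude, the inductive step riding on the already-established fullness. Both proofs ultimately rest on material in the paper's appendices, but yours uses \Cref{prop:nonnoetherian-deligne-formula} where the paper uses its $\overline{\operatorname{coh}}$ machinery; your route is more self-contained in that it never needs the intermediate category or the external localization lemma, at the cost of being more computational in the Verdier quotient. Two small points worth making explicit if you write this up: (i) passing from the $\mathbb{R}\operatorname{Hom}$ statement of \Cref{prop:nonnoetherian-deligne-formula} to the $\operatorname{Hom}$-group statement requires $H^0(\operatorname{hocolim}) = \operatorname{colim} H^0$, which holds for countable filtered homotopy colimits; (ii) in the faithfulness step, the vanishing $t \circ (E'' \to E') = 0$ (rather than merely $t \circ (G' \to E') = 0$) is needed, and it follows because the difference factors through $\operatorname{Hom}(\tau^{<a}G'[1], F) = 0$, again by the $t$-structure orthogonality — you gesture at this but it deserves a line.
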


\begin{proof}
    If $\mathcal{W}$ is a Noetherian algebraic stack, let $\overline{\operatorname{coh}}(\mathcal{W}) \subset \operatorname{Qcoh}(\mathcal{W})$ denote the full subcategory with objects those quasi-coherent $\mathcal{O}_{\mathcal{W}}$-modules $M$ such that $\Gamma(\operatorname{Spec}(R),M)$ is a countably generated $R$-module for all smooth morphisms $\operatorname{Spec}(R) \to \mathcal{W}$. By \cite[Lem.~A.1]{Hall:2022},  $\overline{\operatorname{coh}}(\mathcal{W})$ is a Serre subcategory of $\operatorname{Qcoh}(\mathcal{W})$. In particular, the full subcategory $D^b_{\overline{\operatorname{coh}}}(\mathcal{W}) \subseteq D_{\operatorname{qc}}(\mathcal{W})$ bounded complexes of $\operatorname{O}_{\mathcal{W}}$-modules with cohomology groups in $\overline{\operatorname{coh}}(\mathcal{W})$ is a triangulated subcategory. 
    
    Let $i \colon \mathcal{Z} \hookrightarrow \mathcal{X}$ be a closed immersion with $|\mathcal{Z}|^c = |\mathcal{U}|$ as subsets of $|\mathcal{X}|$. Note that if $M \in D^b_{\overline{\operatorname{coh}}}(\mathcal{U})$, then $\mathbb{R}j_\ast M \in D^b_{\overline{\operatorname{coh}}}(\mathcal{X})$. Indeed, this is local on $\mathcal{X}$ for the smooth topology, so we may assume that $\mathcal{X} = \operatorname{Spec}(R)$ is a Noetherian and affine scheme, where it now follows from a simple argument involving the C\v{e}ch complex (cf.\ \cite[Lem.~A.4]{Hall:2022}). By \cite[Lem.~3.4]{Hall/Rydh:2017}, it follows that there is a Verdier localization:
    \begin{displaymath}
        D^b_{\overline{\operatorname{coh}},|\mathcal{Z}|} (\mathcal{X}) \to D^b_{\overline{\operatorname{coh}}}(\mathcal{X}) \xrightarrow{j^\ast} D^b_{\overline{\operatorname{coh}}}(\mathcal{U}).
    \end{displaymath}
    Noting that $D^b_{\operatorname{coh},|\mathcal{Z}|} (\mathcal{X}) = D^b_{\overline{\operatorname{coh}},|\mathcal{Z}|} (\mathcal{X}) \cap D^b_{\operatorname{coh}} (\mathcal{X})$ as subcategories of $D^+_{\operatorname{qc}}(\mathcal{X})$, the definition of a Verdier localization shows it suffices to prove the following two claims:
    \begin{claim}\label{claim:verdier-surj}
        If $M \in D^b_{\operatorname{coh}}(\mathcal{U})$, then there exists $\overline{M} \in D^b_{\operatorname{coh}}(\mathcal{X})$ such that $j^\ast\overline{M} \simeq M$ in $D^b_{\operatorname{coh}}(\mathcal{U})$.
    \end{claim}
    \begin{claim}\label{claim:verdier-faith}
        If $\phi \colon E \to F$ is a morphism in $D^b_{\overline{\operatorname{coh}}}(\mathcal{X})$ with $j^\ast \phi$ an isomorphism and $F \in D^b_{\operatorname{coh}}(\mathcal{X})$, then there exists a map $\psi \colon E' \to E$ in $D^b_{\overline{\operatorname{coh}}}(\mathcal{X})$ with $j^\ast \psi$ an isomorphism and $E' \in D^b_{\operatorname{coh}}(\mathcal{X})$.
    \end{claim}
    For \Cref{claim:verdier-surj}, we note that $\mathbb{R}j_\ast M$ belongs to $D^b_{\overline{\operatorname{coh}}}(\mathcal{X})$ and so is quasi-isomorphic to a homotopy colimit $\operatorname{hocolim}_{s} M_s$, where $M_s \in D^b_{\operatorname{coh}}(\mathcal{X})$ and the induced maps $\mathcal{H}^k(M_s) \to \mathcal{H}^k(\mathbb{R} j_\ast M)$ are monomorphisms for all $k \in \mathbf{Z}$ \cite[Lem.~A.3]{Hall:2022}. Since $j$ is an open immersion, it is flat and so the induced maps $j^\ast M_s \to j^\ast \mathbb{R} j_\ast M \simeq M$ induce monomorphisms $\mathcal{H}^k(j^\ast M_s) \to \mathcal{H}^k(M)$ for all $k\in \mathbf{Z}$. Since $M \in D^b_{\operatorname{coh}}(\mathcal{U})$ it follows that we may take $s \gg 0$ such that $j^\ast M_s \simeq M$, which gives Claim \ref{claim:verdier-surj}. For Claim \ref{claim:verdier-faith}, form the distinguished triangle:
    \[
        G \to E \xrightarrow{\phi} F \to G[1].
    \]
    Then $j^\ast G \simeq 0$ and so $G \in D^b_{\overline{\operatorname{coh}},|\mathcal{Z}|}(\mathcal{X})$. We apply \cite[Lem.~A.3]{Hall:2022} again to write $G \simeq \operatorname{hocolim}_s G_s$, where $G_s \in D^b_{\operatorname{coh}}(\mathcal{X})$ and $\mathcal{H}^k(G_s) \to \mathcal{H}^k(G)$ a monomorphism for all $k\in \mathbf{Z}$. It follows that $j^\ast G_s \simeq 0$ for all $s$ and so $G_s \in D^b_{\operatorname{coh},|\mathcal{Z}|}(\mathcal{X})$ as well. Since $F$ belongs to $D^b_{\operatorname{coh}}(\mathcal{X})$, it follows from \cite[Lem.~1.2]{Hall/Rydh:2017} that $F \to G[1]$ factors through $G_s[1] \to G[1]$ for some $s \gg 0$. There is an induced morphism of distinguished triangles
    \[
    \xymatrix{G_s \ar[d] \ar[r] & E' \ar[d] \ar[r] & F \ar@{=}[d]\ar[r] & G_s[1] \ar[d] \\ G \ar[r] & E \ar[r] & F \ar[r] & G[1],}
    \]
    which gives \Cref{claim:verdier-faith}.
\end{proof}

\end{appendix}

\bibliographystyle{alpha}
\bibliography{mainbib}

\end{document}